\def \c{\mathbb{C}}
\def \z{\mathbb{Z}}
\def \r{\mathbb{R}}
\def \n{\mathbb{N}}
\def \p{\mathbb{P}}
\def \A{\mathcal{A}}
\def \X{\mathcal{X}}
\def \V{\mathcal{V}}
\def \vol{\textup{vol}}
\def \Re{\textup{Re}}
\def \Im{\textup{Im}}
\def \supp{\textup{supp}}
\def \conv{\textup{conv}}
\def \Sym{\textup{Sym}}
\def \Lie{\textup{Lie}}
\def \ord{\textup{ord}}
\theoremstyle{plain}
\newtheorem{Th}{Theorem}[section]
\newtheorem{Lem}[Th]{Lemma}
\newtheorem{Prop}[Th]{Proposition}
\newtheorem{Cor}[Th]{Corollary}
\theoremstyle{definition}
\newtheorem{Ex}[Th]{Example}
\newtheorem{Def}[Th]{Definition}
\newtheorem{Rem}[Th]{Remark}
\begin{document}

\title{Toric degenerations and symplectic geometry of projective varieties}
\author{Kiumars Kaveh}
\address{Department of Mathematics, University of Pittsburgh,
Pittsburgh, PA, USA.}
\email{kaveh@pitt.edu}

\begin{abstract}
Let $X$ be an $n$-dimensional smooth complex projective variety embedded in $\c\p^{N}$.
We construct a smooth family $\X$ over $\c$ with an embedding in $\c\p^{N} \times \c$ whose generic fiber is $X$
and the special fiber is the torus $(\c^*)^n$ sitting in $\c\p^{N}$ via a monomial embedding.
We use this to show that if $\omega$ is an integral K\"ahler form on $X$ then for any $\epsilon > 0$ there is an open subset 
$U_\epsilon \subset X$ such that $\vol(X \setminus U_\epsilon) < \epsilon$ and $U_\epsilon$ is symplectomorphic to $(\c^*)^n$ equipped 
with a (rational) toric K\"ahler form. As an application we obtain lower bounds for the Gromov width of $(X, \omega)$ in terms of its associated Newton-Okounkov bodies. 
We also show that if $\omega$ lies in the class $c_1(L)$ of a very ample line bundle $L$ then 
$(X, \omega)$ has a full symplectic packing with $d$ equal balls where $d$ is the degree of $(X, L)$. 
\end{abstract}


\thanks{The author is partially supported by a
Simons Foundation Collaboration Grants for Mathematicians and a National Science Foundation Grant.}

\keywords{Projective variety, K\"ahler form, Gromov width, symplectic ball packing, toric degeneration, toric variety, Newton-Okounkov body} 
\date{\today}
\maketitle


\tableofcontents

\section{Introduction}
Let $X$ be a smooth complex projective variety of dimension $n$ embedded in a projective space $\c\p^{N}$.
We construct a smooth family $\pi: \X \to \c$ together with an embedding into $\c\p^{N} \times \c$ such that
the general fiber of the family is $X$ and the special fiber is the algebraic torus $(\c^*)^n$ embedded in $\c\p^{N}$ via
a monomial embedding (Section \ref{sec-Kahler}).
\footnote{Throughout the paper $\c^*$ denotes $\c \setminus \{0\}$, the multiplicative group of nonzero complex numbers.} Notice that the general fiber of the family is 
$X$ and hence a projective variety, while the special fiber is $(\c^*)^n$ and not projective. In fact if we take the closure $\overline{\X}$ of the family in $\c\p^N \times \c$, the general 
fiber is still $X$ but the special fiber may become reducible while at least one of its irreducible components is an $n$-dimensional toric variety. 
The construction of the family $\X$ is a generalization of the deformation to the normal cone in algebraic geometry. 
The construction of the embedding of $\X$ in $\c\p^N \times \c$ depends on the choice of a $\z^n$-valued valuation on the field of rational functions on $X$. 

We use the family $\X$ and its embedding in $\c\p^N \times \c$ 
to obtain results about the symplectic geometry of $X$. Specifically we prove the following: let $\omega$ be a K\"ahler form on $X$ which is integral (i.e. its cohomology class lies in $H^2(X, \z)$). Then for any $\epsilon > 0$ there exists an open subset $U \subset X$ (in the usual classical topology) such that $\vol(X \setminus U) < \epsilon$ and $(U, \omega)$ is 
symplectomorphic to $(\c^*)^n$ equipped with a toric K\"ahler form (Theorem \ref{th-U-large-intro} below and Theorem \ref{th-main2}). 

From this we obtain results about the Gromov width and symplectic ball packing problem for $X$ 
(Section \ref{sec-GW-packing}). In particular, when $\omega$ lies in the class $c_1(L)$ of a very ample line bundle $L$ 
we conclude that $(X, \omega)$ has a full symplectic packing with $d$ equal balls where $d$ is the degree of $L$ (i.e. the self-intersection of the divisor class of $L$).


Let us explain the above more precisely. Fix a finite set $\A = \{\beta_1, \ldots, \beta_r\} \subset \z^n$ and a point $c = (c_1, \ldots, c_r) \in (\c^*)^r$. 
We assume that the set of differences of elements in $\A$ generates the lattice $\z^n$ and hence the orbit map:
\begin{equation} \label{equ-intro-psi}
\psi_{\A, c}: u \mapsto (u^{\beta_1}c_1: \cdots : u^{\beta_r}c_r),
\end{equation}
is an isomorphism of varieties from $(\c^*)^n$ to its image $O_{\A, c} \subset \c\p^{r-1}$. Here $u =(u_1, \ldots, u_n) \in (\c^*)^n$ and $u^{\alpha}$ is shorthand for $u_1^{a_1} \cdots u_n^{a_n}$, 
where $\alpha = (a_1, \ldots, a_n)$. The closure of $O_{\A, c}$ is a (not necessarily normal) projective toric variety.
The map $\psi_{\A, c}$ also induces a K\"ahler form on $(\c^*)^n$ as follows:
Consider the standard Hermitian product on $\c^r$. Let $\Omega$ be the associated Fubini-Study K\"ahler form on $\c\p^{r-1}$ and let $\omega_{\A, c}$ be
the pull-back of $\Omega$ to $(\c^*)^n$ under the map $\psi_{\A, c}$. 
The symplectic manifold $((\c^*)^n, \omega_{\A, c})$ is a Hamiltonian space with 
respect to the natural action of the compact torus $(S^1)^n$ on $(\c^*)^n$ by multiplication. The image of its moment map is the interior of the convex hull of $\A$.

Now let $X \subset \c\p^{r-1}$ be a smooth projective variety embedded in some projective space $\c\p^{r-1}$. 
We construct a complex manifold $\X$ together with a holomorphic function $\pi: \X \to \c$, as well as a an 
embedding $\X \hookrightarrow \c\p^{r-1} \times \c$ such that:
\begin{itemize}
\item[(a)] The family $\X$ is trivial over $\c^*$ i.e. $\pi^{-1}(\c^*) \cong X \times \c^*$. In particular for each $t \neq 0$ we have $X_t := \pi^{-1}(t)$ is biholomorphic to $X$.
Moreover, $X_1 \hookrightarrow \c\p^{r-1} \times \{1\}$ coincides with the original embedding $X \hookrightarrow \c\p^{r-1}$.
\item[(b)] The fiber $X_0 = \pi^{-1}(0)$ is the algebraic torus $(\c^*)^n$ embedded in $\c\p^{r-1} \times \{0\}$ via a monomial map $\psi_{\A, c}$ for some finite set $\A \subset \z^n$ and 
$c \in (\c^*)^r$ as above.
\item[(c)] The map $\pi: \X \to \c$ has no critical points, i.e. $d\pi$ is nonzero at every point in $\X$.
\end{itemize}

Next consider a smooth projective variety $X$ equipped with a very ample line bundle $L$. 
The line bundle $L$ gives rise to the Kodaira embedding 
$X \hookrightarrow \p(H^0(X, L)^*)$. Let $\omega$ be a K\"ahler form in the class $c_1(L)$. 
We note that by Moser's trick any two K\"ahler forms in $c_1(L)$ are symplectomorphic. In particular $\omega$ is
symplectomorphic to the pull-back of a Fubini-Study K\"ahler form on the projective space $\p(H^0(X, L)^*)$ to $X$.

Using the family $\X$ above we prove the following (Theorem \ref{th-main}):
\begin{Th} \label{th-U-torus-intro}
There exists an open subset $U \subset X$ (in the usual classical topology) such that $(U, \omega)$ is 
symplectomorphic to $((\c^*)^n, \omega_{\A, c})$, for some $\A \subset \z^n$ and $c \in (\c^*)^r$ as above (i.e. $\A$ is a finite subset such that the 
differences of elements in $\A$ generate the lattice $\z^n$).
\end{Th}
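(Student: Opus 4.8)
The plan is to use the family $\X \hookrightarrow \c\p^{r-1} \times \c$ constructed above as a tool to transport symplectic information from the special fiber $X_0 = ((\c^*)^n, \omega_{\A,c})$ to the general fiber $X_1 \cong X$. First I would equip $\c\p^{r-1} \times \c$ with the product of a Fubini--Study form $\Omega$ and the standard form on $\c$, and restrict it to $\X$; call the restriction $\widetilde\omega$. By property (c) the map $\pi \colon \X \to \c$ is a submersion, so its fibers $X_t$ are smooth symplectic submanifolds of $(\X, \widetilde\omega)$ for every $t$, and $\widetilde\omega|_{X_1}$ is (up to the identification in (a)) a Kähler form in $c_1(L)$, hence symplectomorphic to $\omega$ by Moser's trick as noted in the text. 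Likewise $\widetilde\omega|_{X_0} = \omega_{\A,c}$ by property (b) and the definition of $\omega_{\A,c}$ as a pullback of $\Omega$. So it suffices to produce an open subset $U \subset X_1$ symplectomorphic to an open subset of $X_0$ whose complement is the image of a monomial embedding of $(\c^*)^n$ — in fact to all of $X_0 = (\c^*)^n$.

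The key step is a Moser-type / symplectic parallel transport argument along the base $\c$. Since $\pi$ is a proper-free submersion away from where things degenerate, I would like to use the symplectic connection: the $\widetilde\omega$-orthogonal complements to the fibers $\ker d\pi$ define a horizontal distribution, and parallel transport along a path from $1$ to $0$ in $\c$ gives a symplectomorphism between (open subsets of) $X_1$ and $X_0$. The obstruction — and the main obstacle of the proof — is that $\pi$ is \emph{not} proper: the special fiber $X_0 = (\c^*)^n$ is noncompact (it is an open dense subset of its projective closure, which may moreover be reducible, as the introduction warns). Hence horizontal lifts of paths need not exist for all time on all of $X_1$, and one only gets a symplectomorphism defined on the open subset $U \subset X_1$ consisting of those points whose backward horizontal lift reaches $t = 0$. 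One must argue that this $U$ is nonempty, open, and — crucially — that parallel transport carries it onto all of $X_0$: this should follow because $X_0$ is itself the locus over which the lift \emph{starting} at $t=0$ is defined, and by reversibility of the flow the image of $U$ is exactly that locus. Controlling the flow near the "missing" boundary (the complement of $O_{\A,c}$ in its closure, together with any extra components of $\overline{\X}_0$) is where the real work lies; one expects to use that $\X$ is smooth and $\pi$ critical-point-free to keep the horizontal vector field well-defined and to rule out blow-up in finite time except by escaping to the noncompact ends.

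Concretely I would carry out the steps in this order. \textbf{(1)} Fix $\widetilde\omega$ on $\X$ and verify $\widetilde\omega|_{X_t}$ is Kähler on each fiber, so the symplectic orthogonal splitting $T\X = \ker d\pi \oplus (\ker d\pi)^{\perp_{\widetilde\omega}}$ holds everywhere. \textbf{(2)} Define the horizontal lift $v$ of the unit vector field $\partial_t$ on $\c$ (after choosing the path, e.g.\ the real segment $[0,1]$) and let $U \subset X_1$ be the maximal open set on which the backward flow of $v$ is defined up to $t=0$; symmetrically flow forward from $X_0$ to see that the flow identifies $U$ with all of $X_0$. \textbf{(3)} Check the resulting diffeomorphism $U \to X_0$ is a symplectomorphism: this is the standard computation that parallel transport by the symplectic connection preserves the fiberwise forms, using $d\widetilde\omega = 0$ and Cartan's formula, exactly as in Moser's argument. \textbf{(4)} Transport back through the Moser symplectomorphism of step (1) to replace $\widetilde\omega|_{X_1}$ by $\omega$, concluding that $(U,\omega) \cong ((\c^*)^n, \omega_{\A,c})$. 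The volume bound $\vol(X \setminus U) < \epsilon$ asserted in the introduction is not needed for this theorem, but would come afterward by a separate argument controlling how much of $X_1$ fails to flow all the way to $t=0$; here only nonemptiness and the symplectomorphism type of $U$ are required.
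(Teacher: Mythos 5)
Your proposal is correct and follows essentially the paper's own route: your horizontal lift of $\partial_t$ with respect to the $\tilde{\omega}$-symplectic connection is precisely (up to sign) Ruan's gradient-Hamiltonian vector field $\V_\pi$ used in Sections \ref{sec-grad-Hamiltonian}--\ref{sec-main}, and your Cartan-formula check of form preservation is Proposition \ref{prop-grad-Hamiltonian}(b). The completeness point you flag as ``where the real work lies'' is resolved exactly as you anticipate (Lemma \ref{lem-grad-Ham-flow}): the flow starting on $X_0$ moves toward $t>0$, where $\pi^{-1}(\c^*)\cong X\times\c^*$ with $X$ compact, so all noncompact ends lie in the special fiber and the forward flow from every point of $X_0$ reaches $X_1$.
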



Consider a K\"ahler form on $(\c^*)^n$ of the form $\frac{1}{m}\omega_{\A, c}$, where $m$ is a positive integer. We call such a form a {\it rational toric K\"ahler form}. 
The following is our main result about the symplectic geometry of smooth projective varieties. It states that we can enlarge the open subset $U$ in Theorem \ref{th-U-torus-intro}
as much as we wish provided that we consider rational toric K\"ahler forms on $(\c^*)^n$.

Let $X$ be a smooth projective variety and let $\omega$ be a K\"ahler form on $X$ which is integral (i.e. its cohomology class lies in $H^2(X, \z)$). 
We recall that by the Lefschetz theorem on $(1, 1)$-classes any integral K\"ahler form is in $c_1(L)$ for an ample line bundle $L$.
\begin{Th} \label{th-U-large-intro}
For any $\epsilon > 0$ we can find an open subset $U \subset X$ such that 
$\vol(X \setminus U) < \epsilon$ and $(U, \omega)$ is symplectomotphic to $(\c^*)^n$ equipped with a rational toric K\"ahler form.
\end{Th}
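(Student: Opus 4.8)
The plan is to deduce Theorem \ref{th-U-large-intro} from Theorem \ref{th-U-torus-intro} by a scaling trick: instead of embedding $X$ by the complete linear system $|L|$, we embed it by $|mL|$ for large $m$ and apply Theorem \ref{th-U-torus-intro} to $(X, L^{\otimes m})$, obtaining an open torus $U_m$ symplectomorphic to $((\c^*)^n, \omega_{\A_m, c_m})$ with respect to a form in the class $c_1(L^{\otimes m}) = m\,c_1(L) = m[\omega]$; after rescaling by $\frac{1}{m}$ this produces an open $U_m \subset X$ symplectomorphic to $(\c^*)^n$ with the rational toric form $\frac{1}{m}\omega_{\A_m, c_m}$. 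So the real content is to show that the complement $X \setminus U_m$ can be made to have arbitrarily small volume as $m \to \infty$.

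First I would identify precisely what $X \setminus U$ is in the construction behind Theorem \ref{th-U-torus-intro}. The open torus $U$ is obtained as $X_1 \cap \mathcal{V}$ for a suitable open set $\mathcal{V}$ in the total space $\X$, transported along the trivialization $\pi^{-1}(\c^*) \cong X \times \c^*$; concretely, $U$ should be the preimage under the Kodaira embedding of the big torus orbit in the ambient space, i.e. $X$ minus the union of hyperplane sections $\{z_i = 0\}$ coming from the coordinate system adapted to the valuation. Thus $X \setminus U$ is a union of finitely many divisors $D_i = X \cap \{z_i = 0\}$, each an ample divisor (a hyperplane section). The Fubini–Study volume form is the top power of $\omega$, so $\vol(X \setminus U) = 0$ already as a measure-theoretic statement — a proper closed subvariety has measure zero. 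Hence for the embedding by $|mL|$ the complement is again a proper subvariety, of volume zero, and in fact $\vol(X \setminus U_m) = 0 < \epsilon$ for \emph{every} $m$, including $m = 1$; no limiting argument over $m$ is needed at all, and one may simply take $U = U_1$ from Theorem \ref{th-U-torus-intro}.

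There is, however, a subtlety I expect to be the main obstacle: whether ``$\vol$'' in the statement means the Liouville volume (in which case the above triviality applies) or a refined normalized notion, and, more importantly, whether the rational toric form $\frac{1}{m}\omega_{\A,c}$ is genuinely needed. Re-examining the statement, the point of allowing \emph{rational} toric forms (rather than insisting on $\omega_{\A,c}$ itself) must be that in Theorem \ref{th-U-torus-intro} the open set $U$, while dense, carries a form $\omega_{\A,c}$ whose total mass over $(\c^*)^n$ equals $\vol(X,\omega)$ only in the limit; for fixed data the moment polytope $\conv(\A)$ captures the volume only up to the normalization by $m$. So the honest content is: as $m$ grows, the normalized bodies $\frac1m\conv(\A_m)$ converge to the Newton–Okounkov body $\Delta$ of $(X,L,v)$, and $\vol(\Delta) = \vol(X,\omega)/n!$, whence the mass of $\frac1m\omega_{\A_m,c_m}$ over the torus exhausts $\vol(X,\omega)$. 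I would therefore run the argument as follows: (1) fix a valuation $v$ with one-dimensional leaves, so that $\dim H^0(X, L^{\otimes m}) = \#(m\Delta \cap \z^n) \cdot(\text{something})$ and $\A_m = v(H^0(X,L^{\otimes m}) \setminus \{0\})$ satisfies $\tfrac1m \conv(\A_m) \to \Delta$; (2) apply Theorem \ref{th-U-torus-intro} to $(X, L^{\otimes m})$ and rescale, getting $U_m$ with rational toric form $\frac1m\omega_{\A_m,c_m}$ of total symplectic volume $\operatorname{vol}(U_m, \tfrac1m\omega_{\A_m,c_m}) = n!\,m^{-n}\vol(\conv \A_m) \to n!\,\vol(\Delta) = \vol(X,\omega)$; (3) since $U_m$ is an open subset of $X$ carrying the restriction of $\omega$ (up to the ambient symplectomorphism from Moser), and its volume approaches the full volume of $X$, conclude $\vol(X \setminus U_m) \to 0$ and pick $m$ with $\vol(X \setminus U_m) < \epsilon$. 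The step I expect to require the most care is (1)–(2): matching the combinatorial volume count $m^{-n}\#(\A_m) \to \vol(\Delta)$ with the \emph{symplectic} volume of $\frac1m\omega_{\A_m,c_m}$, i.e. verifying that the Fubini–Study pullback $\omega_{\A_m,c_m}$ has moment image $\operatorname{int}\conv(\A_m)$ and hence Liouville volume $n!\vol(\conv\A_m)$, and that rescaling the ambient $\cp^{r-1}$-form by $\tfrac1m$ is compatible with the symplectomorphism of Theorem \ref{th-U-torus-intro} — this is where the hypothesis that the differences of $\A_m$ generate $\z^n$ (so that $\Delta$ has full dimension $n$ and the monomial map is an isomorphism onto its image) gets used.
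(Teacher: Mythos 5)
Your eventual argument --- steps (1)--(3) of your last paragraph --- is essentially the paper's own proof (Theorem \ref{th-main2}): pass to $E^k \subset H^0(X, L^{\otimes k})$, use that the rational polytopes $\Delta_k = \frac{1}{k}\conv(\A_k)$ exhaust the Newton--Okounkov body $\Delta$ (Proposition \ref{prop-approx-Delta}), apply Theorem \ref{th-main} to the embedding given by $E^k$, rescale the resulting form by $\frac{1}{k}$, and compare volumes using the Bernstein--Kushnirenko theorem (Theorem \ref{th-BK}) for the torus side and Theorem \ref{th-NO-body} for $(X,\omega)$. The technical points you flag at the end (moment image of $\omega_{\A_k,c_k}$ is the interior of $\conv(\A_k)$, compatibility of the $\frac1k$-rescaling with the symplectomorphism) are handled in the paper exactly as you anticipate.

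However, the middle of your proposal contains a genuine error which you never clearly retract. You assert that $U$ is the preimage of the big torus orbit, i.e.\ $X$ minus the hyperplane sections $\{z_i=0\}$, so that $X\setminus U$ is a proper subvariety of measure zero, and hence that $m=1$ already works and ``no limiting argument over $m$ is needed.'' This is false. In the construction behind Theorem \ref{th-U-torus-intro}, $U$ is the image of the zero fiber $X_0=(\c^*)^n$ under the time-one gradient-Hamiltonian flow; it is not a Zariski-open subset of $X$, and in general it is not dense (Remark \ref{rem-U-not-dense}). Its symplectic volume equals that of $((\c^*)^n,\omega_{\A,c})$, namely $n!\,\vol(\conv \A)$, which is typically strictly smaller than $\vol(X,\omega)=n!\,\vol(\Delta)$; Example \ref{ex-1} exhibits an elliptic curve where the toric open subset carries only one third of the total volume. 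So $\vol(X\setminus U)$ is in general positive even though ``$\vol$'' does mean Liouville volume, your hedge that ``the above triviality applies'' in that case would, if relied on, sink the proof, and your later phrase ``the open set $U$, while dense'' repeats the same misconception. The passage to large $k$ with $\frac{1}{k}\conv(\A_k)\to\Delta$ is not a refinement of normalization conventions --- it is the entire content of the theorem, and it is precisely what your steps (1)--(3), and the paper, carry out.
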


Roughly speaking, this result claims that {\it in symplectic category and over arbitrarily large open subsets, any smooth projective variety looks like a 
toric variety equipped with a toric K\"ahler form.}



In Section \ref{sec-GW-packing} we discuss some applications of Theorem \ref{th-U-large-intro} to symplectic topology.
Before explaining these applications here let us briefly recall some important concepts from symplectic topology.

The famous {\it non-squeezing theorem} of Gromov states that a ball $B_{2n}(r)$ of radius $r$, in the symplectic vector space $\r^{2n}$ with the standard structure, 
cannot be embedded symplectically into a cylinder $B_2(R) \times \r^{2n-2}$ of radius $R$, unless $r \leq R$. 
Hence, the non-squeezing theorem tells us that, while symplectic maps are volume-preserving, it is much more restrictive for a map to be 
symplectic than it is to be volume-preserving (\cite{Gromov}).
Motivated by the Gromov non-squeezing theorem, one introduces the notion of the {\it Gromov width}
which is an important invariant of a symplectic manifold. It is defined as the supremum of capacities of symplectic balls that can be embedded in the manifold.  
More precisely, let $(X, \omega)$ be a symplectic manifold of (real) dimension $2n$. Consider the ball of radius $r$:
$$B_{2n}(r)= \{ z=(z_1, \ldots, z_n) \in \c^n \mid \sum_{i=1}^n |z_i|^2 < r^2 \},$$ 
equipped with the standard symplectic form $\omega_{\textup{st}} = \sum_i dx_i \wedge dy_i$. The quantity $\pi r^2$ is usually referred to as 
the {\it capacity} of the ball $B_{2n}(r)$. The Gromov width of $(X, \omega)$ 
is the supremum of the set $\{\pi r^2 \mid B_{2n}(r) \textup{ can be symplectically embedded in } (X, \omega)\}$.

Another related concept is that of a symplectic packing. As above let $(X, \omega)$ be a symplectic manifold of (real) dimension $2n$. 
Fix an integer $N \geq 1$. A {\it symplectic packing of $(M, \omega)$ by $N$ equal balls of radius $r$}  is a symplectic embedding:
$$\underbrace{B_{2n}(r) \amalg \cdots \amalg B_{2n}(r)}_{N \textup{ times}} \to M,$$
of a disjoint union of $N$ balls of equal radius $r$ into $M$.
The Darboux theorem ensures that, given $N$, such a packing always exists provided that the radius $r$ is small enough. But when we increase the radius $r$ there will be
obstructions for existence of a packing. An obvious obstruction is the total volume of the packing, since a symplectic embedding preserves the volume. In symplectic packing theory
one investigates other obstructions for symplectic packings beyond the obvious volume obstruction. One defines:
$$v_N(X, \omega) = \sup_r \frac{N \vol(B_{2n}(r)}{\vol(M, \omega)},$$
where the supremum is over all the radii $r$ for which there is a symplectic packing of $(X, \omega)$ with $N$ balls of radius $r$. If $v_N(X, \omega) = 1$ one 
says that $(X, \omega)$ has a {\it full symplectic packing} by $N$ equal balls. If $v_N(X, \omega) < 1$ one says that there is a {\it packing obstruction} (see \cite{Biran}). 

Now let $X$ be a smooth complex projective variety with an ample line bundle $L$ and a K\"ahler form $\omega$ in $c_1(L)$. 
In Section \ref{sec-GW-packing} we show the following. 
\begin{itemize}
\item We give a lower bound for the Gromov width of $(X, \omega)$ in terms of an associated convex body $\Delta \subset \r^n$, namely its Newton-Okounkov body (see Section 
\ref{sec-enlarge} for the definition of a Newton-Okounkov body). More precisely 
the Gromov width is at least {the supremum of the sizes of simplices that lie in the interior of a Newton-Okounkov body of $(X, L)$} 
(see Definition \ref{def-size-simplex} and Corollary \ref{cor-GW-NO-body}). 
In particular, this implies that when $L$ is very ample the Gromov width of $(X, \omega)$ is at least $1$. 
\item Moreover, we show that when $L$ is very ample the symplectic 
manifold $(X, \omega)$ has a full symplectic packing by $d$ equal balls, where $d$ is the degree of the line bundle $L$ (i.e. the self-intersection number of its divisor class).
\end{itemize}
Paul Biran has asked whether the Gromov width of a compact symplectic manifold is at least $1$ if the symplectic form is integral. As said above, 
our approach in particular confirms this for smooth projective varieties and very ample line bundles. The fact that the Gromov width in the case of a very ample line bundle is at 
least $1$ can be obtained by other methods involving the notion of Seshadri constant (see \cite[Theorem 5.1.22]{Lazarsfeld-book}, \cite{McDuff}). 

As suggested to the author by Robert Lazarsfeld, the construction of the family $\X$ and its embedding in this paper, may be useful in approaching the 
Ein-Lazarsfeld conjecture. It states that the Seshadri constant of an ample line bundle on a smooth projective variety is at least $1$ (\cite[Conjecture 5.2.4]{Lazarsfeld-book}). 
This conjecture implies Biran's conjecture for smooth projective varieties and ample line bundles.

We would like to point out that after the first version of the present paper was posted on arXiv.org, following the methods and ideas here, 
Witt Nystr\"om proved a K\"ahler version of the statement about symplectic ball embeddings (\cite{WittNystrom}).

Also after the first version of the paper appeared we learned about the work of Atsushi Ito on Seshadri constants (\cite{Ito}). 
Our lower bound on the Gromov width in terms of the size of the largest simplex is closely related to the result in \cite{Ito} in which the author 
obtains similar bounds for the Seshadri constants using different methods.
Finally we would like to mention the recent interesting papers \cite{KL-surface} and \cite{KL-positivity}. In there the authors prove criteria for
positivity of line bundles in terms of their associated Newton-Okounkov bodies. In particular, in the case of surfaces they prove a lower bound for the 
Seshdari constant (\cite[Proposition 4.7]{KL-surface}). 


As mentioned before, the construction of the family $\X$ and its embedding in $\c\p^{r-1} \times \c$ in this paper is a generalization of the deformation to the normal cone 
and is related to the Rees algebra construction in commutative algebra. Nevertheless in this paper we do not explicitly use Rees algebras. 
Interested reader can look at \cite[Sections 6.5 and 15.8]{Eisenbud} for basic material about Rees algebras as well as \cite[Section 2]{Teissier}) and 
\cite{Anderson} for construction of flat families from valuations.
It is also worthwhile to point out the relationship with Gr\"obner degenerations. In the Gr\"obner basis theory one degenerates an ideal $I$ in a polynomial ring to its
initial ideal $\textup{\bf in}(I)$ which then gives a flat degeneration of the scheme $X$ defined by $I$ to the scheme $X_0$ defined by $\textup{\bf in}(I)$. The problem is that even when $I$ is 
a prime ideal, the ideal $\textup{\bf in}(I)$ is usually not radical. This makes it harder to obtain information about the geometry of $X$ from this degeneration. On the other hand, in our construction 
instead of degenerating the defining relations of $X$ (i.e. the ideal), we degenerate a set of generators for the (homogeneous) coordinate ring of $X$ to their initial terms. We show that with suitable choices, these initial terms are monomials and define an embedding of the torus $(\c^*)^n$ into the projective space.

One of the ingredients in the proof of Theorem \ref{th-U-torus-intro} is the notion of a {\it gradient-Hamiltonian vector field} due
to Ruan (see \cite[Section 3.1]{Ruan} as well as Section \ref{sec-grad-Hamiltonian} in this paper). 
The author learned about this notion in the interesting paper \cite{NNU} where the gradient-Hamiltonian flow is applied to the Gelfand-Zetlin toric degeneration of the flag variety. 
The symplectomorphism between the torus and an open subset $U$ of $X$ in Theorem \ref{th-U-torus-intro} is given by 
the flow of the gradient-Hamiltonian vector field of $\pi: \X \to \c$ with respect to the K\"ahler structure coming from the embedding $\X \hookrightarrow \c\p^{r-1} \times \c$. 

The toric degenerations usually considered in algebraic geometry literature, have slightly different properties than our degeneration satisfying properties (a)-(c) above
(see for example \cite{Lakshmibai, AB, Anderson, Kaveh-crystal}). \footnote{The term toric degeneration is also used for certain families of varieties whose special fiber is 
a union of toric varieties, see \cite{Gross}.} Let $X$ be an irreducible projective variety. 
What is usually understood by a toric degeneration of $X$, 
is a family of varieties $\pi: \X \to \c$, where $\X$ is a variety and $\pi$ is a morphism, which has the following properties:
\begin{itemize}
\item[(1)] The family is trivial over $\c^*$, i.e. $\pi^{-1}(\c^*)$ is isomorphic to $X \times \c^*$. In particular, 
each fiber $X_t = \pi^{-1}(t)$, $t \neq 0$, is isomorphic to $X$
\item[(2)] The special fiber $X_0 = \pi^{-1}(0)$ is a projective toric variety.
\item[(3)] All the fibers are irreducible and reduced (as schemes).
\item[(4)] The family $\X$ is flat over $\c$ (this is in fact implied by the previous conditions).
\end{itemize}

Suppose the family $\X$ is sitting in $\c\p^N \times \c$, for some projective space $\c\p^N$ and such that $\pi$ is the restriction of the projection on the second factor.
The conditions (3) and (4) then give the following: the Hilbert polynomial of $X_t$, as a subvariety of $\c\p^N$, is independent of $t$ 
(see \cite[Chapter III, Theorem 9.9]{Hartshorne}). Let us consider the product K\"ahler form on $\c\p^N \times \c$ where $\c\p^N$ is equipped 
with the standard Fubini-Study K\"ahler form and $\c$ with the standard K\"ahler form $\frac{i}{2} dz \wedge d\bar{z}$. 
Equip (the smooth locus of) $\X$ with a K\"ahler structure induced from $\c\p^N \times \c$. The conditions above in particular tell us that the symplectic volume of 
all the fibers $X_t$, including the special fiber $X_0$, are the same. 

In \cite{Anderson}, motivated by the previous toric degeneration result of \cite{AB}, 
Anderson constructs a large class of flat toric degenerations of projective varieties in connection with the theory of Newton-Okounkov bodies. 
The construction depends on a choice of a $\z^n$-valued valuation $v$ on the field of rational functions on the variety. A crucial assumption in this construction is that an associated semigroup of 
lattice points is finitely generated (the value semigroup of $v$).
In \cite{HK}, given a toric degeneration $\X$ for a smooth projective variety $X$ as above, 
the authors construct a completely integrable system on $X$ by pulling back the toric integrable system on the toric variety $X_0$ 
(to do this one needs a compatible K\"ahler structure on the family $\X$). 
Since in general the special fiber $X_0$ is non-smooth, the pull-back integrable system is defined only on an open subset (in the usual classical topology) of $X$. 
The fact that the toric variety $X_0$ has the same symplectic volume as $X$ implies that the open subset on $X$ where the integrable system is defined is dense. 
A main result in \cite{HK} is that the integrable system extends continuously to the whole $X$.
The image of this integrable system is the Newton-Okounkov body associated to $X$ and the valuation $v$ (used in building the toric degeneration).

Our construction in this paper works in a much more general setting. 
Here we do not require the technical and restrictive 
assumption of ``finite generation of the value semigroup'' needed in \cite{Anderson, HK}. 
We note that instead of a projective toric variety, our special fiber $X_0$ is just the torus $(\c^*)^n$, and moreover it can have a smaller symplectic volume than that of the 
original variety $X$. 
That is why the toric open subset $U$ in Theorem \ref{th-U-torus-intro} may not be dense in $X$. Nevertheless if the special fiber $X_0 = (\c^*)^n$ happens to have the 
same symplectic volume as $X$ then $U$ is dense in $X$. These are illustrated in Section \ref{sec-example}, Examples \ref{ex-1} and \ref{ex-2}.

We expect that our results can be generalized to non-smooth projective varieties as well (here we use smoothness to guarantee that the gradient-Hamiltonian flow
is defined for all $t \geq 0$).

We believe that the results of this paper open new doors to study symplectic geometry of projective varieties and in particular the notions 
of Gromov width and symplectic packings, and more applications
beyond those in Section \ref{sec-GW-packing} would appear.
In particular, as pointed out by Dusa McDuff, the results of the paper may help to give new bounds for the Gromov width as well as symplectic packings in several important classes of 
examples such as abelian surfaces and $K3$ surfaces. Also they might be useful in other packing problems such as 
for the symplectic polydisc embeddings (even in real dim $4$ and for square polydiscs $B^2(a)\times B^2(a)$ not much is known e.g. for general tori $T^2(1)\times T^2(2)$ or 
for K3 surfaces).

\medskip
\noindent{\bf Acknowledgements:}
The author would like to thank Yael Karshon and Milena Pabiniak for suggesting to use toric degeneration techniques to study lower bounds on Gromov width and symplectic packings. 
Special thanks to Megumi Harada for much helpful discussions and patiently checking some details. 
Also the author is grateful to Paul Biran, Dusa McDuff, Alex K\"uronya, Behrang Noohi, Dave Anderson, Mahdi Majidi-Zolbanin, Chris Manon, Kristin Shaw, Sue Tolman, Atsushi Ito, Bernard 
Teissier and Askold Khovanskii for very useful conversations and correspondence. 

\section{Preliminaries on toric K\"ahler structures on $(\c^*)^n$} \label{sec-prelim}
Fix a finite set $\A = \{\beta_1, \ldots, \beta_r\} \subset \z^n$. Consider the linear action of the algebraic torus $(\c^*)^n$ on $\c\p^{r-1}$ by:
$$u \cdot (z_1: \cdots :z_r) = (u^{\beta_1}z_1: \cdots : u^{\beta_r}z_r).$$
Here $u =(u_1, \ldots, u_n) \in (\c^*)^n$ and $u^{\alpha}$ is shorthand for $u_1^{a_1} \cdots u_n^{a_n}$, where $\alpha = (a_1, \ldots, a_n)$.
Take a point $c=(c_1: \ldots: c_r) \in \c\p^{r-1}$ with $c_i \neq 0$ for all $i$. Let $O_{\A, c} = (\c^*)^n \cdot c$ denote the orbit of this point, i.e.:
$$O_{\A, c} = \{ (u^{\beta_1}c_1 : \cdots : u^{\beta_r}c_r) \mid u \in (\c^*)^n \}.$$
Let us assume that the set of differences $\{ \alpha - \beta \mid \alpha, \beta \in \A\}$ generates $\z^n$. 
One observes that under this assumption the orbit map $\psi_{\A, c}: u \mapsto u \cdot c$ is an isomorphism of algebraic varieties
between $(\c^*)^n$ and $O_{\A, c}$. The closure of $O_{\A, c}$ is a projective toric variety $X_{\A, c}$. In general, this variety is not normal and hence not smooth 
(see \cite[Section 2.1]{CLS}).

Consider the standard Hermitian product on $\c^r$. Let $\Omega$ be the associated Fubini-Study K\"ahler form on $\c\p^{r-1}$ and $\omega_{\A, c}$ denote 
the pull-back of $\Omega$ to $(\c^*)^n$ under the map $\psi_{\A, c}$. 
 
The following is a reformulation of the Bernstein-Kushnirenko theorem (see \cite{Kush, Bern}). This formulation as well as the symplectic proof of the Bernstein-Kushnirenko theorem 
is due to A. G. Khovanskii (see \cite[Section 5]{Atiyah}).
\begin{Th}[Bernstein-Kushnirenko] \label{th-BK}
The degree of $X_{\A, c}$, as a subvariety of the projective space $\c\p^{r-1}$, is equal to $n!$ times the 
Euclidean volume of the convex polytope $\Delta = \conv(\A)$ (this in turn is equal to $n!$ times the symplectic volume of $X_{\A, c}$).
\end{Th}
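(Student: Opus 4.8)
Here is how I would approach the proof (it is Khovanskii's symplectic argument, cf.\ \cite{Atiyah}). \emph{The plan} is to express \emph{both} the degree and the symplectic volume of $X_{\A,c}$ as a single integral over its dense torus orbit $O_{\A,c}$, and then to evaluate that integral by recognizing $\omega_{\A,c}$ as a toric symplectic form whose moment polytope is exactly $\Delta=\conv(\A)$. The genuine content sits in this last identification; everything else is bookkeeping.

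First I would normalize $\Omega$ so that $[\Omega]$ is the positive generator of $H^2(\c\p^{r-1},\z)$. Then for any $n$-dimensional projective subvariety $V\subset\c\p^{r-1}$ one has $\deg V=\int_{[V]}\Omega^{\,n}=\int_{V_{\mathrm{sm}}}\Omega^{\,n}$, the second equality because the singular locus of $V$ has measure zero. Taking $V=X_{\A,c}=\overline{O_{\A,c}}$, using that $O_{\A,c}$ is Zariski dense (hence of full measure) in $X_{\A,c}$, and that $\psi_{\A,c}$ is a biholomorphism onto $O_{\A,c}$, we get $\deg X_{\A,c}=\int_{(\c^*)^n}\omega_{\A,c}^{\,n}$. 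The identical argument shows that the symplectic volume $\tfrac1{n!}\int_{X_{\A,c}}\Omega^{\,n}$ equals $\tfrac1{n!}\int_{(\c^*)^n}\omega_{\A,c}^{\,n}$, so the parenthetical assertion $\deg X_{\A,c}=n!\cdot(\text{symplectic volume})$ is automatic. It therefore remains to prove $\int_{(\c^*)^n}\omega_{\A,c}^{\,n}=n!\,\vol(\Delta)$.

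For this I would use the Hamiltonian action of the compact torus $T=(S^1)^n\subset(\c^*)^n$: it acts on $\c\p^{r-1}$ through $t\mapsto\mathrm{diag}(t^{\beta_1},\dots,t^{\beta_r})$, preserving $\Omega$, hence (via $\psi_{\A,c}$) on $((\c^*)^n,\omega_{\A,c})$, Hamiltonianly. Composing the standard moment map of the $(S^1)^r$-action on $(\c\p^{r-1},\Omega)$ with the projection $\r^r\to\r^n$ dual to the inclusion $\z^n\hookrightarrow\z^r$ determined by $\A$, and pulling back by $\psi_{\A,c}$, one computes the moment map to be
\[
\Phi(u)=\frac{\sum_{j=1}^r |c_j|^2 e^{2\langle\beta_j,x\rangle}\,\beta_j}{\sum_{k=1}^r |c_k|^2 e^{2\langle\beta_k,x\rangle}},\qquad x:=(\log|u_1|,\dots,\log|u_n|).
\]
One checks directly that $\Phi=\nabla f$ with $f(x)=\tfrac12\log\bigl(\sum_j|c_j|^2 e^{2\langle\beta_j,x\rangle}\bigr)$. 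The standing hypothesis that the differences of elements of $\A$ generate $\z^n$ forces the affine span of $\A$ to be all of $\r^n$; hence the Hessian of $f$, a weighted covariance matrix of the $\beta_j$, is everywhere positive definite, so $f$ is strictly convex and, by the standard description of the Legendre transform, $\nabla f$ is a diffeomorphism from $\r^n$ onto $\mathrm{int}(\conv\A)=\mathrm{int}(\Delta)$. In particular $\Phi$ is a submersion with image $\mathrm{int}(\Delta)$, and since $\Phi$ is constant on $T$-orbits those orbits are isotropic, hence (being $n$-dimensional) Lagrangian.

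Finally I would write $(\c^*)^n\cong\r^n\times T$ via $u\mapsto(x,\arg u)$. Since $\omega_{\A,c}$ is $T$-invariant with moment map $\Phi=\nabla f$ and Lagrangian $T$-orbits, a direct computation with the $T$-invariant Kähler potential $\phi(u)=\log\sum_j|c_j|^2|u^{\beta_j}|^2\ (=2f(x))$, for which $\omega_{\A,c}=\tfrac{i}{2\pi}\partial\bar\partial\phi$, shows that after the rescaling of coordinates dictated by the normalization of $\Omega$ the map $u\mapsto(\Phi(u),\arg u)$ is a symplectomorphism from $((\c^*)^n,\omega_{\A,c})$ onto $\mathrm{int}(\Delta)\times T$ with the standard form $\sum_i d\xi_i\wedge d\theta_i$ and $T$ of total mass $1$; equivalently (this is the Duistermaat--Heckman theorem applied to the $T$-action on the smooth locus of $X_{\A,c}$, a set of full measure) the pushforward of the Liouville measure $\omega_{\A,c}^{\,n}/n!$ under the moment map is Lebesgue measure on $\Delta$. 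Either way $\int_{(\c^*)^n}\omega_{\A,c}^{\,n}/n!=\vol(\Delta)$, and multiplying by $n!$ and combining with the first step gives $\deg X_{\A,c}=n!\,\vol(\Delta)$.

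The only delicate points are (i) the convex-analytic fact that $\nabla f$ surjects onto $\mathrm{int}(\conv\A)$ — which is exactly where the lattice hypothesis on $\A$ enters, through strict convexity of $f$ — and (ii) keeping the normalization constant relating $\Omega$ to $\tfrac{i}{2}\partial\bar\partial\log\|z\|^2$ (equivalently, of the angular coordinates) consistent with the Euclidean volume throughout. I expect (ii) to be the most error-prone step, which is why one prefers to quote the toric Duistermaat--Heckman statement rather than rederive the constants by hand.
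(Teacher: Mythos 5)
Your proposal is correct, and it is precisely the symplectic argument of Khovanskii that the paper itself invokes: the paper states this theorem without proof, citing \cite[Section 5]{Atiyah}, and your reduction of both the degree and the symplectic volume to $\int_{(\c^*)^n}\omega_{\A,c}^{\,n}$, followed by the Legendre-transform/moment-map identification of the image with $\mathrm{int}(\conv\A)$ and the Duistermaat--Heckman (action-angle) volume computation, is exactly that cited proof. The only points needing care are the ones you already flag — strict convexity of $f$ from the affine-spanning hypothesis on $\A$, and consistent normalization of the Fubini--Study form so that symplectic volume matches Euclidean volume of $\Delta$ — so nothing is missing.
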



The K\"ahler potential of $\omega_{\A, c}$ is given by:
$$ K_{\A, c}(u)  = \log(\sum_{j=1}^r |c_j| |u|^{2\beta_j}), \quad u \in (\c^*)^n.$$

The form $\omega_{\A, c}$ is invariant under the natural action of the compact torus $(S^1)^n$ on 
$(\c^*)^n$ by multiplication. Also $((\c^*)^n, \omega_{\A, c})$ is a Hamiltonian $(S^1)^n$-space. The image of 
the moment map of this Hamiltonian space is the interior of the convex hull of $\A$.

\section{A family of complex manifolds over $\c$} \label{sec-family}
Let $X$ be a complex manifold of dimension $n$. Consider the product manifold $X \times \c^*$. We regard it as a trivial family 
with the fiber map $\pi: X \times \c^* \to \c^*$, the projection on the $\c^*$ factor.
In this section we explain a way to attach a copy of $(\c^*)^n$ to this family as the fiber over $0$ to obtain a larger complex manifold $\X$ as well as a holomorphic extension of $\pi$ 
to $\X$:
$$\xymatrix{
X \times \c^*~ \ar[rd]_{\pi} \ar@{^{(}->}[r] & \X \ar[d]^{\pi} \\ & \c\\
}$$

All the fibers of the family $\X$ except the fiber over zero, namely $(\c^*)^n$, are biholomorphic to $X$. 
The construction of the family $\X$ depends on the choice of a point $p \in X$ and a system of coordinates at this point.

The construction of $\X$ is a weighted version of the well-known deformation to the normal cone from algebraic geometry (see \cite[Section 5.1]{Fulton}). 
One takes an $n$-dimensional variety $X$ and a subvariety $Y \subset X$. The deformation to the normal cone is a family of varieties (over $\c$) which deforms $X$ to the normal cone of 
$Y$ in $X$. In fact here we deal only with the case where $X$ is smooth and $Y$ is a single point $p$. In this case, the normal cone to $p$ is just the tangent space 
$T_pX \cong \c^n$. One constructs the deformation $\tilde{\X}$ as follows: define $\tilde{\X}$ to be the blow-up of $X \times \c$ at the point $(p, 0)$. The projection on the 
second factor $X \times \c \to \c$ gives a regular map $\pi: \tilde{\X} \to \c$. For $t \neq 0$ the fiber $\pi^{-1}(t)$ is naturally isomorphic to $X$. One shows that the fiber 
$\pi^{-1}(0)$ is the union of two hypersurfaces $Z_1$ and $Z_2$ where $Z_1$ is the blow-up of $X$ at $p$ and $Z_2$ is a projective space isomorphic to $\c\p^n$ which 
contains the affine space $T_pX = \c^n$ as an affine chart. \footnote{In the algebraic geometry literature one usually considers $X \times \c\p^1$ (instead of $X \times \c$) 
and blows it up at the point $(p, \infty)$ (instead of $(p, 0)$).}  Now if one removes $Z_1$ as well as all the coordinate planes in $Z_2 \cong \c\p^n$ what we obtain is a 
family $\pi: \X \to \c$ which satisfies the above, namely the generic fiber $\pi^{-1}(t)$, $t \neq 0$ is $X$ and the special fiber $\pi^{-1}(0)$ is $(\c^*)^n$. 
Below we discuss a weighted version of this construction for complex manifolds.

Let $X$ be a complex manifold of dimension $n$. Let $p \in X$ and let $u_1, \ldots, u_n$ be local coordinates at $p$ (in particular $u_1(p) = \cdots = u_n(p) = 0$).
Let $U \subset X$ be an open subset (in the Euclidean topology of $X$) such that the map:
$$\phi = (u_1, \ldots, u_n): U \to \c^n,$$
is a biholomorphism between $U$ and its image $\phi(U)$ which is an open subset of $\c^n$ containing $0$. 

Let us fix a vector $\gamma = (\gamma_1, \ldots, \gamma_n) \in \n^n$ (here $\n$ denotes the set of positive integers).
Consider the map $\tilde{\phi}: U \times \c^* \to \c^{n+1}$ defined by:
$$\tilde{\phi}(x, t) = (t^{-\gamma_1}u_1(x), \ldots, t^{-\gamma_n}u_n(x), t).$$

\begin{Prop} \label{prop-tilde-phi}
(1) The image $\tilde{\phi}(U \times \c^*)$ is open in $\c^{n+1}$ (in the Euclidean topology). (2) $\tilde{\phi}$ is a biholomorphism between $U \times \c^*$ and its image.
\end{Prop}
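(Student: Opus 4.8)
The plan is to transport the statement to a concrete open subset of $\c^n$ via $\phi$ and then observe that $\tilde{\phi}$ is, up to this identification, nothing but the restriction of a globally defined biholomorphism of $\c^n \times \c^*$ to an open set. First I would set $V = \phi(U) \subseteq \c^n$, which is open and contains $0$, and replace $\tilde{\phi}$ by the map $F \colon V \times \c^* \to \c^{n+1}$ given by $F(v,t) = (t^{-\gamma_1}v_1, \ldots, t^{-\gamma_n}v_n, t)$. Since $\phi \times \mathrm{id}_{\c^*}$ is a biholomorphism of $U \times \c^*$ onto $V \times \c^*$, proving (1) and (2) for $F$ is equivalent to proving them for $\tilde{\phi}$, and $F$ is manifestly holomorphic (each coordinate is a Laurent monomial in $v_1,\dots,v_n$ and $t$, with $t$ restricted to $\c^*$).

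Next I would introduce the auxiliary map $G \colon \c^n \times \c^* \to \c^n \times \c^*$, $G(w,t) = (t^{\gamma_1}w_1, \ldots, t^{\gamma_n}w_n, t)$. This is holomorphic precisely because each $\gamma_i$ is a positive integer, so $t \mapsto t^{\gamma_i}$ is a polynomial. A one-line computation shows that $G$ is a biholomorphism of $\c^n \times \c^*$ onto itself, with holomorphic inverse $(z,t) \mapsto (t^{-\gamma_1}z_1, \ldots, t^{-\gamma_n}z_n, t)$. The point is that this inverse, restricted to $V \times \c^*$, is exactly $F$. Hence $F$ is the restriction of the biholomorphism $G^{-1}$ to the open subset $V \times \c^*$, so $F$ is a biholomorphism onto its image, and its image is $G^{-1}(V \times \c^*)$, which is open in $\c^n \times \c^*$ because $G$ is continuous and $V \times \c^*$ is open.

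Finally, for assertion (1) as literally stated I would note that $\c^n \times \c^*$ is itself an open subset of $\c^{n+1}$, so the image of $F$, being open in $\c^n \times \c^*$, is open in $\c^{n+1}$; assertion (2) then follows directly. I do not expect a genuine obstacle here: the only point requiring a moment's care is keeping track of injectivity (comparing last coordinates forces $t = t'$, after which the first $n$ coordinates force $v = v'$) and the distinction between openness in $\c^n \times \c^*$ and openness in $\c^{n+1}$, which is dispatched by the trivial remark just made.
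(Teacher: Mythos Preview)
Your argument is correct and is essentially identical to the paper's own proof: the paper introduces the map $\Psi(u,t)=(t^{-\gamma_1}u_1,\ldots,t^{-\gamma_n}u_n,t)$ on $\c^n\times\c^*$ (your $G^{-1}$), notes its inverse is holomorphic so $\Psi$ is a biholomorphism and hence open, and then factors $\tilde{\phi}$ as the composition of $(x,t)\mapsto(\phi(x),t)$ with $\Psi$. The only cosmetic difference is that you name the inverse $G$ rather than the map $\Psi$ itself.
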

\begin{proof}
Consider the map $\Psi: \c^n \times \c^* \to \c^n \times \c^*$ given by $$\Psi(u_1, \ldots, u_n, t)=(t^{-\gamma_1}u_1, \ldots, t^{-\gamma_n}u_n, t).$$
The inverse of $\Psi$ is given by $(u_1, \ldots, u_n, t) \mapsto (t^{\gamma_1}u_1, \ldots, t^{\gamma_n}u_n, t)$ which is clearly holomorphic.
Thus $\Psi$ is a biholomorphism. In parcitular $\Psi$ is an open map. This readily implies that $\tilde{\phi}(U \times \c^*) = \Psi(\phi(U) \times \c^*)$
is open in $\c^n \times \c^*$ and hence in $\c^{n+1}$. 
To prove (2) note that $\tilde{\phi}$ is the composition of the map $(x, t) \mapsto (\phi(x), t)$ with $\Psi$. Since both of these maps are 
biholomorphisms so is $\tilde{\phi}$. This finishes the proof.
\end{proof}


Now consider the set $\tilde{U} \subset \c^{n+1}$ defined by: 
\begin{equation} \label{equ-tilde-U}
\tilde{U} = \tilde{\phi}(U \times \c^*) \cup ((\c^*)^n \times \{0\}).
\end{equation}
\begin{Prop}
$\tilde{U}$ is an open subset of $\c^{n+1}$ (in the Euclidean topology).
\end{Prop}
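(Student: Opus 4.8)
The plan is to verify the defining condition of openness directly: every point of $\tilde{U}$ should have a Euclidean ball around it contained in $\tilde{U}$. By Proposition \ref{prop-tilde-phi} the set $\tilde{\phi}(U \times \c^*)$ is already open in $\c^{n+1}$, so the only points that need attention are those on the ``special fiber'' $(\c^*)^n \times \{0\}$. Fix such a point $(w, 0)$ with $w = (w_1, \ldots, w_n) \in (\c^*)^n$ and aim to produce $\epsilon > 0$ so that the $\epsilon$-ball about $(w,0)$ lies in $\tilde{U}$.

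The key is to use the explicit description $\tilde{\phi}(U \times \c^*) = \Psi(\phi(U) \times \c^*)$ from the proof of Proposition \ref{prop-tilde-phi}, where $\Psi^{-1}(v, t) = (t^{\gamma_1}v_1, \ldots, t^{\gamma_n}v_n, t)$. Thus for $t \neq 0$ a point $(v,t)$ lies in $\tilde{\phi}(U \times \c^*)$ precisely when $(t^{\gamma_1}v_1, \ldots, t^{\gamma_n}v_n) \in \phi(U)$. Since $\phi(U)$ is open and contains $0$, it contains a ball $B(0, \delta)$; and since every weight $\gamma_i$ is a \emph{positive} integer, the scalars $t^{\gamma_i}$ tend to $0$ as $t \to 0$, uniformly as long as the $v_i$ stay bounded. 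This positivity of the $\gamma_i$ is the one place where the hypothesis $\gamma \in \n^n$ is essential — if some $\gamma_i$ vanished, the point $(w,0)$ need not be interior, and indeed the statement would fail.

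Concretely I would choose $\epsilon$ with $\epsilon < \min_i |w_i|$ (forcing $v \in (\c^*)^n$ whenever $|v - w| < \epsilon$), with $\epsilon < 1$, and with $\epsilon < \delta/(M\sqrt{n})$ where $M = 2\max_i |w_i|$. Then for any $(v,t)$ in the $\epsilon$-ball about $(w,0)$: if $t = 0$ then $(v,0) \in (\c^*)^n \times \{0\} \subset \tilde{U}$; if $t \neq 0$, the bounds $|v_i| < |w_i| + \epsilon < M$ and $|t| < 1 \leq \gamma_i$ give $|t^{\gamma_i} v_i| \leq |t|\,|v_i| < \epsilon M < \delta/\sqrt{n}$ for each $i$, so $(t^{\gamma_1}v_1, \ldots, t^{\gamma_n}v_n) \in B(0,\delta) \subset \phi(U)$, hence $(v,t) = \Psi(\Psi^{-1}(v,t)) \in \Psi(\phi(U) \times \c^*) = \tilde{\phi}(U \times \c^*) \subset \tilde{U}$.

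There is no genuine obstacle in this argument; it is a routine neighborhood estimate. The only subtlety to keep in mind is bookkeeping with the two competing constraints on $\epsilon$ — one that keeps the perturbed point in $(\c^*)^n$ and bounds its coordinates, another that forces the rescaled coordinates into the fixed neighborhood $B(0,\delta)$ of the origin sitting inside $\phi(U)$ — together with the observation that $\gamma_i \geq 1$ makes $|t|^{\gamma_i} \leq |t|$ for $|t| < 1$, which is what lets a single $\epsilon$ work.
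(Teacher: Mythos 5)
Your argument is correct and is essentially the paper's own proof: both reduce to showing each point of $(\c^*)^n \times \{0\}$ is interior by choosing $\epsilon$ small enough that the rescaled coordinates $t^{\gamma_i}v_i$ land in a fixed neighborhood of $0$ inside $\phi(U)$, using positivity of the $\gamma_i$ (the paper via exponents $1/\gamma_i$ in the choice of $\epsilon$, you via $|t|^{\gamma_i} \leq |t|$ for $|t|<1$). Your extra condition $\epsilon < \min_i |w_i|$, which keeps the $t=0$ points of the ball inside $(\c^*)^n \times \{0\}$, is a small bookkeeping refinement of the same estimate.
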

\begin{proof}
By Proposition \ref{prop-tilde-phi} we know that $\tilde{\phi}(U \times \c^*)$ is open. So we just need to show that any point in $(\c^*)^n \times \{0\}$ is an 
interior point of $\tilde{U}$. Take $(u_1, \ldots, u_n, 0) \in (\c^*)^n \times \{0\}$. Let $\delta > 0$ be such that the poly-disc $\{(w_1, \ldots, w_n) \mid |w_i| < \delta,~ i = 1, \ldots, n\}$ 
is contained in $\phi(U)$. Take $\epsilon > 0$ such that:
$$\epsilon < \min \{ (\frac{\delta}{(|u_i| + 1)})^{1/\gamma_i} \mid i = 1, \ldots, n\}.$$
Thus if $(u'_1, \ldots, u'_n, s) \in (\c^*)^n \times \c^*$ is such that $|s| < \epsilon$ and $|u'_i - u_i| < 1$ for $i=1, \ldots, n$ then:
$$|s^{\gamma_i} u'_i| = |s|^{\gamma_i} |u'_i| \leq |\epsilon|^{\gamma_i} (|u_i|+1) < \delta.$$
So $(s^{\gamma_1}u'_1, \ldots, s^{\gamma_n}u'_n) \in \phi(U)$ which implies that: $$(u'_1, \ldots, u'_n, s) = 
(s^{-\gamma_1}(s^{\gamma_1}u'_1), \ldots, s^{-\gamma_n}(s^{\gamma_n}u'_n), s) \in \tilde{\phi}(U \times \c^*).$$ This finishes the proof. 
\end{proof}

Now we consider the following set:
$$\X = ((X \times \c^*) \amalg \tilde{U}) / \sim,$$
that is, $\X$ is the disjoint union of the complex manifolds $X \times \c^*$ and $\tilde{U}$ quotient by the equivalence relation $\sim$ defined as follows:
Let $(x, t) \in X \times \c^*$ and $(y_1, \ldots, y_n, t) \in \tilde{\phi}(U \times \c^*)$ we say that 
$(x, t) \sim (y_1, \ldots, y_n, t)$ if $(x, t)$ lies in $U \times \c^*$ and $\tilde{\phi}(x, t) = (y_1, \ldots, y_n, t)$.
In other words, $\X$ is the complex manifold obtained by gluing $X \times \c^*$ and $\tilde{U}$ along their open subsets $U \times \c^*$ and 
$\tilde{\phi}(U \times \c^*)$ using the biholomorphism $\tilde{\phi}$.

We also define a map $\pi: \X \to \c$ as follows: for $(x, t) \in X \times \c^*$ let $\pi(x, t) = t$ and for $(\tilde{u}, t) \in \tilde{U} \subset \c^{n+1}$ let $\pi(\tilde{u}, t) = t$. 
It is clear that $\pi$ is well-defined. For each $t \in \c$ we denote 
the fiber $\pi^{-1}(t)$ by $X_t$. It is also clear from the definition of $\X$ and $\pi$ that $X_t \cong X$ for $t \neq 0$ and $X_0$ is the algebraic torus $(\c^*)^n$.

Since $\tilde{\phi}$ is a bihomolorphism we conclude:
\begin{Prop} \label{prop-family}
We have the following:
\begin{itemize}
\item[(1)] The set $\X$ has the structure of a complex manifold. 
\item[(2)] The map $\pi: \X \to \c$ is holomorphic.
\item[(3)] The map $\pi$ has no critical points.
\end{itemize}
\end{Prop}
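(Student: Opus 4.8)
The plan is to read off all three statements from the explicit description of $\X$ as a gluing of two complex manifolds. By construction $\X$ is obtained from the disjoint union of the $(n+1)$-dimensional complex manifolds $X\times\c^*$ and $\tilde{U}$ (the latter open in $\c^{n+1}$, by the preceding Proposition) by gluing the open subsets $U\times\c^*$ and $\tilde{\phi}(U\times\c^*)$ along the map $\tilde{\phi}$, which by Proposition \ref{prop-tilde-phi} is a biholomorphism between these open sets. Taking the charts of $X\times\c^*$ together with the identity chart $\tilde{U}\hookrightarrow\c^{n+1}$ gives an atlas on $\X$ whose transition functions on overlaps are $\tilde{\phi}$ and $\tilde{\phi}^{-1}$, hence holomorphic; second countability passes to $\X$ from its two pieces. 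So the content of part (1) reduces to checking that the quotient topology on $\X$ is Hausdorff, and this is the step I would carry out with care, since gluing manifolds along open sets can produce non-Hausdorff spaces (for instance the line with a doubled origin).

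To prove Hausdorffness I would argue directly, using that the quotient map realizes $X\times\c^*$ and $\tilde{U}$ as open subspaces of $\X$ — its restriction to each of the two factors is an open topological embedding — whose union is $\X$, and that each of these subspaces is Hausdorff. Consequently any two points of $\X$ lying in a common piece can be separated there. The remaining case is a point $P$ in the image of $X\times\c^*$ but not of $\tilde{U}$, and a point $Q$ in the image of $\tilde{U}$ but not of $X\times\c^*$. Then $P$ is represented by some $(x,t)$ with $x\in X\setminus U$ and $t\neq 0$, and $Q$ is represented by some $(\tilde{u},s)\in\tilde{U}$; if $s\neq 0$ then $(\tilde{u},s)\in\tilde{\phi}(U\times\c^*)$, so $Q$ would lie in the image of $X\times\c^*$, and hence in fact $s=0$. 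Since the well-defined map $\pi$ is continuous and $\pi(P)=t\neq 0=\pi(Q)$, picking disjoint open discs $D_1\ni t$ and $D_2\ni 0$ in $\c$ yields disjoint open neighborhoods $\pi^{-1}(D_1)\ni P$ and $\pi^{-1}(D_2)\ni Q$. This completes the proof of (1).

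Granting (1), parts (2) and (3) become a bookkeeping exercise. In each chart of the atlas above, the map $\pi$ is simply the last coordinate $t$: on $X\times\c^*$ it is the projection onto the $\c^*$ factor, and on $\tilde{U}\subset\c^{n+1}$ it is the restriction of the $(n+1)$-st coordinate function; in either case it is holomorphic, so $\pi$ is holomorphic, proving (2). Moreover in each chart $d\pi=dt$ is nowhere vanishing, so $d\pi\neq 0$ at every point of $\X$ and $\pi$ has no critical points, proving (3).

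The only genuinely non-formal point is thus the Hausdorff verification in (1), and its essential mechanism is that $\tilde{\phi}$ preserves the coordinate $t$ and that $t$ is nonzero everywhere on $X\times\c^*$, so that the ``extra'' points of the two pieces — those of $(X\setminus U)\times\c^*$ and those of $(\c^*)^n\times\{0\}$ — are separated by the value of $\pi$.
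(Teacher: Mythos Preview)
Your proof is correct and follows essentially the same route as the paper: both obtain the complex structure from gluing $X\times\c^*$ and $\tilde{U}$ along the biholomorphism $\tilde{\phi}$, and both verify (2) and (3) by observing that in each chart $\pi$ is the projection onto the $t$-coordinate. The only difference is that the paper simply asserts Hausdorffness is ``easy to see,'' whereas you supply the argument---separating the residual points of the two pieces via the continuous function $\pi$, since they have $t\neq 0$ and $t=0$ respectively---which is a welcome addition but not a different strategy.
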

\begin{proof}
(1) Firstly, it is easy to see that with quotient topology
$\X = ((X \times \c^*) \sqcup \tilde{U}) / \sim$ is a Hausdorff space.
Next we recall that under the identification $\sim$ the open subset $U \times \c^* \subset X \times \c^*$ is identified with $\tilde{\phi}(U \times \c^*)$. The latter is an open subset of 
$\c^n \times \c^*$ and moreover $\tilde{\phi}$ is a biholomorphism between these two open sets by Proposition \ref{prop-tilde-phi}. Thus $\X$ is obtained from the complex manifold 
$X \times \c^*$ by gluing another complex manifold $\tilde{U} \subset \c^{n+1}$ along the open subset $\tilde{\phi}(U \times \c^*)$. This gives $\X$ the structure of a complex manifold.
To prove (2) and (3) we note that in the open subset $X \times \c^*$ the map $\pi$ is given by $(x, t) \mapsto t$ which is clearly holomorphic and has no critical points (its differential is never zero).
Also in the other open set $\tilde{U} \subset \c^{n+1}$ the map $\pi$ is defined by $(u_1, \ldots, u_n, t) \mapsto t$ which is also holomorphic with no critical points. This proves the proposition.  
\end{proof}

Figure \ref{fig-tilde-phi} illustrates the map $\tilde{\phi}$ and the family $\pi: \X \to \c$.
\begin{figure}[h]
\includegraphics[width=9cm]{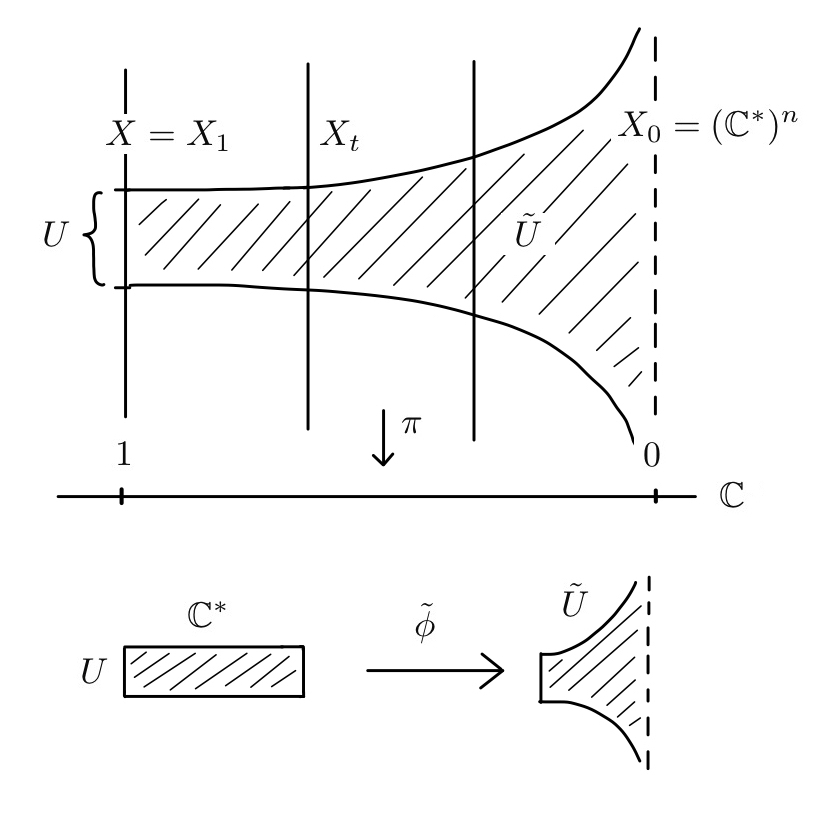} 
\caption{Illustration of the map $\tilde{\phi}$, the open set $\tilde{U}$ and the family $\X$ over $\c^*$} 
\label{fig-tilde-phi} 
\end{figure}

\section{Analytic extension of meromorphic functions to the special fiber} \label{sec-power-series}
As before let $X$ be a complex manifold and $p$ a point on $X$. Also let $u_1, \ldots, u_n$ denote local coordinates in an open neighborhood $U$ of $p$.
Let $f$ be a meromorphic function on $X$ which is holomorphic at $p$ and let
\begin{equation} \label{equ-f}
f = \sum_{\alpha \in \z_{\geq 0}^n} c_{\alpha} u^{\alpha},
\end{equation}
be the power series expansion of $f$ in $u = (u_1, \ldots, u_n)$. Here as usual we have used the multi-index notation $u^\alpha$ to denote $u_1^{\alpha_1} \cdots u_n^{\alpha_n}$ where
$\alpha = (\alpha_1, \ldots, \alpha_n)$. 
Since $f$ is holomorphic at $p$ we know the following:
\begin{Prop} \label{prop-abs-convergent}
There exists an open neighborhood $V$ (in the Euclidean topology) of the origin such that the power series in \eqref{equ-f} converges absolutely in $V$.
\end{Prop}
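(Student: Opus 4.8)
The statement is local at $p$ and amounts to the classical fact from several complex variables that a function holomorphic near a point has a locally absolutely convergent Taylor series; the manifold structure plays no role. The plan is to pass to the chart $\phi := (u_1, \ldots, u_n) : U \to \c^n$, invoke the multivariable Cauchy integral formula on a small polydisc, and deduce Cauchy estimates for the coefficients $c_\alpha$ in \eqref{equ-f}.

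First I would use the hypothesis that $f$ is holomorphic at $p$: this means $f$ is holomorphic on some open neighborhood $W$ of $p$, so $g := f \circ \phi^{-1}$ is holomorphic on the open set $\phi(W \cap U) \subset \c^n$, which contains the origin. I would then choose a polyradius $r = (r_1, \ldots, r_n)$ with all $r_i > 0$ small enough that the closed polydisc $\overline{P}_r = \{ w \in \c^n : |w_i| \leq r_i,\ i = 1, \ldots, n \}$ lies in $\phi(W \cap U)$, and set $M := \sup_{\overline{P}_r} |g| < \infty$. Iterating the one-variable Cauchy integral formula over each coordinate circle yields, for $u$ in the open polydisc $P_r$,
\[
g(u) = \frac{1}{(2\pi i)^n} \int_{|w_1| = r_1} \cdots \int_{|w_n| = r_n} \frac{g(w)}{(w_1 - u_1) \cdots (w_n - u_n)} \, dw_1 \cdots dw_n .
\]
Next I would expand each factor $\frac{1}{w_i - u_i}$ as the geometric series $\sum_{k \geq 0} u_i^k w_i^{-k-1}$, which for fixed $u \in P_r$ converges uniformly in $w_i$ on $|w_i| = r_i$ since $|u_i|/r_i < 1$, multiply the $n$ series together, and interchange the summations with the integral. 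This produces $g(u) = \sum_{\alpha \in \z_{\geq 0}^n} c_\alpha u^\alpha$ with $c_\alpha$ given by the usual Cauchy integral over the distinguished boundary, hence the estimate $|c_\alpha| \leq M / (r_1^{\alpha_1} \cdots r_n^{\alpha_n})$; by uniqueness of the power series expansion these $c_\alpha$ are exactly the ones in \eqref{equ-f}.

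To conclude, I would fix any $\rho = (\rho_1, \ldots, \rho_n)$ with $0 < \rho_i < r_i$ and take $V = \{ u \in \c^n : |u_i| < \rho_i,\ i = 1, \ldots, n \}$; then for $u \in V$ the Cauchy estimate gives $\sum_\alpha |c_\alpha| \, |u^\alpha| \leq M \prod_{i=1}^n (1 - \rho_i/r_i)^{-1} < \infty$, which is the claimed absolute convergence on $V$. The one step requiring genuine care is the interchange of the infinite sum with the contour integral: it is justified because, for $u$ in the strictly smaller polydisc, the geometric expansion of $(w_1 - u_1)^{-1} \cdots (w_n - u_n)^{-1}$ converges absolutely and uniformly on the compact distinguished boundary $\{|w_i| = r_i\}$ with a bound independent of $w$. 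Everything else is routine multi-index bookkeeping, and the hypothesis ``$f$ holomorphic at $p$'' is precisely what furnishes the polydisc $\overline{P}_r$ on which $g$ is holomorphic and bounded; this makes the proposition essentially a standard preliminary, but a necessary one for the analytic continuation to the special fiber carried out next.
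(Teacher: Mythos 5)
Your proof is correct: the Cauchy integral formula on a polydisc, the resulting Cauchy estimates $|c_\alpha| \leq M r^{-\alpha}$, and comparison with a product of geometric series give exactly the classical local absolute convergence that is claimed. The paper in fact offers no proof of this proposition at all --- it is invoked as the standard fact that a function holomorphic at $p$ has a locally absolutely convergent power series in the coordinates $u_1, \ldots, u_n$ --- and your argument is precisely the textbook justification being implicitly relied upon.
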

Let us take $\delta > 0$ such that the closed poly-disc $\{(u_1, \ldots, u_n) \mid |u_i| \leq \delta,~i=1, \ldots, n \}$ is contained in $V$. In particular, the series:
\begin{equation} \label{equ-delta}
\sum_{\alpha=(\alpha_1, \ldots, \alpha_n) \in \z^n_{\geq 0}} c_\alpha \delta^{\alpha_1 + \cdots+ \alpha_n},
\end{equation}
is absolutely convergent.

Let $$\supp(f) = \{ \alpha \in \z^n_{\geq 0} \mid c_\alpha \neq 0\} \subset \z^n_{\geq 0}$$ be the set of exponents which appear in the power series expansion \eqref{equ-f} of $f$.
As before fix an integer vector $\gamma = (\gamma_1, \ldots, \gamma_n) \in \n^n$. Let $\beta \in \supp(f)$ be a point where the minimum of $\alpha \mapsto \gamma \cdot \alpha$ 
is attained on $\supp(f)$. Note that in general $\beta$ is not unique (although later in Section \ref{sec-valuation} we will choose $\gamma$ carefully so that the minimum is obtained at a 
unique $\beta$). Define
the meromorphic function $\tilde{f}$ on $X \times \c^*$ by:
$$\tilde{f} = t^{-\gamma \cdot \beta} f.$$

For $i=1, \ldots, n$, let $\tilde{u}_i$ denote the holomorphic function:
$$\tilde{u}_i = t^{-\gamma_i} u_i,$$
on $U \times \c^*$. By Proposition \ref{prop-tilde-phi} the $\tilde{u}_i$ form a coordinate system on an open neighborhood of the special fiber $X_0 = (\c^*)^n \times \{0\}$.

We have:
\begin{align} 
\tilde{f} &= t^{-\gamma \cdot \beta} \sum_{\alpha \in \z^n_{\geq 0}} c_{\alpha} u^\alpha, \\
&= \sum_{\alpha \in \z^n_{\geq 0}} c_{\alpha} t^{-\gamma \cdot \beta} t^{\gamma \cdot \alpha} \tilde{u}^\alpha, \\
\label{equ-tilde-f}
&= \sum_{\alpha \in \z^n_{\geq 0}} c_{\alpha} \tilde{u}^\alpha (t^{\gamma \cdot \alpha - \gamma \cdot \beta}).
\end{align}
Note that the last line is a power series in $\tilde{u} = (\tilde{u}_1, \ldots, \tilde{u}_n)$ and $t$.


\begin{Th} \label{th-tilde-f-convergent}
For each $(\tilde{u}, 0) \in (\c^*)^n \times \{0\}$, the power series of $\tilde{f}$ is absolutely convergent in an open neighborhood of $(\tilde{u}, 0)$ in $\c^{n+1}$.
Thus $\tilde{f}$ is holomorphic at $(\tilde{u}, 0)$.
\end{Th}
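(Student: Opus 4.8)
The plan is to reduce absolute convergence of the series \eqref{equ-tilde-f} to the absolute convergence of the original power series \eqref{equ-f}, which is already known, and to upgrade pointwise absolute convergence to convergence on an honest open neighborhood of the $t=0$ point by a standard Abel-type argument. Write the given point as $(\tilde u^{0},0)$ with $\tilde u^{0}=(\tilde u^{0}_1,\dots,\tilde u^{0}_n)\in(\c^*)^n$. The one structural fact to record first is that, because $\beta$ attains the minimum of $\alpha\mapsto\gamma\cdot\alpha$ on $\supp(f)$, every exponent $\gamma\cdot\alpha-\gamma\cdot\beta$ appearing in \eqref{equ-tilde-f} is a nonnegative integer; hence \eqref{equ-tilde-f} is a genuine power series in the $n+1$ variables $(\tilde u_1,\dots,\tilde u_n,t)$, with no negative powers of $t$. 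For such a series it is enough to produce one point $z^{*}=(w_1,\dots,w_n,s)$ at which it converges absolutely and whose coordinates dominate those of $(\tilde u^{0},0)$, i.e.\ $|w_i|>|\tilde u^{0}_i|$ for all $i$ and $|s|>0$ (the last coordinate of $(\tilde u^{0},0)$ being $0$, any $s\neq 0$ dominates it). Indeed, absolute convergence at such a $z^{*}$ forces absolute and locally uniform convergence on the open polydisc $\{|\tilde u_i|<|w_i|,\ |t|<|s|\}$, which contains $(\tilde u^{0},0)$; being a locally uniform limit of monomials, $\tilde f$ is then holomorphic there, and in particular \eqref{equ-tilde-f} converges absolutely in that neighborhood.

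To produce such a point I would use the substitution $\tilde u_i=t^{-\gamma_i}u_i$, equivalently $u_i=t^{\gamma_i}\tilde u_i$, that is built into \eqref{equ-tilde-f}. Fix $R>\max_i|\tilde u^{0}_i|$ and set $\epsilon=\min\{1,\ \delta/R\}$, where $\delta>0$ is as in \eqref{equ-delta}, so that \eqref{equ-f} converges absolutely on the closed polydisc of radius $\delta$. If $|\tilde u_i|\le R$ for all $i$ and $0<|t|\le\epsilon$, then, since $\gamma_i\ge 1$ and $|t|\le 1$, we have $|t^{\gamma_i}\tilde u_i|=|t|^{\gamma_i}|\tilde u_i|\le|t|\,R\le\epsilon R\le\delta$ for each $i$; therefore
\begin{align*}
\sum_{\alpha}|c_\alpha|\,|\tilde u^\alpha|\,|t|^{\gamma\cdot\alpha-\gamma\cdot\beta}
&=|t|^{-\gamma\cdot\beta}\sum_{\alpha}|c_\alpha|\,\bigl|t^{\gamma_1}\tilde u_1\bigr|^{\alpha_1}\!\cdots\bigl|t^{\gamma_n}\tilde u_n\bigr|^{\alpha_n}\\
&\le|t|^{-\gamma\cdot\beta}\sum_{\alpha}|c_\alpha|\,\delta^{\alpha_1+\cdots+\alpha_n}<\infty.
\end{align*}
Applying this with $\tilde u=(R,\dots,R)$ and $t=s$ for any fixed $s$ with $0<|s|\le\epsilon$ shows that \eqref{equ-tilde-f} converges absolutely at $z^{*}=(R,\dots,R,s)$; since $R>|\tilde u^{0}_i|$ and $|s|>0$, the point $z^{*}$ dominates $(\tilde u^{0},0)$ coordinatewise, so the argument of the previous paragraph applies and finishes the proof.

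The one place where care is needed is exactly the degeneration $t\to 0$: the crude bound $|t|^{-\gamma\cdot\beta}\sum_\alpha|c_\alpha|\delta^{\alpha_1+\cdots+\alpha_n}$ blows up as $t\to 0$, so it cannot by itself give convergence on a neighborhood of a point with $t=0$. The remedy is the purely formal observation that \eqref{equ-tilde-f} involves only nonnegative powers of $t$, together with the transport of absolute convergence inward from the dominating point $z^{*}$. If one prefers to avoid invoking the Abel-type fact, the same conclusion can be reached by a more computational estimate: split $\supp(f)$ into the finite set $\{\alpha:\gamma\cdot\alpha=\gamma\cdot\beta\}$, over which \eqref{equ-tilde-f} restricts to a polynomial in $\tilde u$, and the complementary set $\{\alpha:\gamma\cdot\alpha\ge\gamma\cdot\beta+1\}$, from which one extra factor of $t$ can be pulled out before estimating, making the tail bound uniform on a full neighborhood of $t=0$.
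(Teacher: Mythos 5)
Your argument is correct, but it takes a genuinely different route from the proof in the paper. You verify absolute convergence of \eqref{equ-tilde-f} at a single dominating point $z^*=(R,\dots,R,s)$ with $R>\max_i|\tilde{u}^0_i|$ and $0<|s|\leq\min\{1,\delta/R\}$, using the substitution $u_i=t^{\gamma_i}\tilde{u}_i$ to compare with \eqref{equ-delta}, and then transport convergence to the whole polydisc $\{|\tilde{u}_i|<R,\ |t|<|s|\}$ by the Abel lemma for power series in several variables; the only structural input is that the exponents $\gamma\cdot\alpha-\gamma\cdot\beta$ are nonnegative integers, which follows from the minimality of $\gamma\cdot\beta$ on $\supp(f)$. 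The paper instead estimates term by term on a full neighborhood $\{|\tau|<\epsilon,\ |\tilde{w}_i-\tilde{u}_i|<1\}$ directly, and for this it must produce a uniform gap constant $c>0$ with $1-\frac{\gamma\cdot\beta}{\gamma\cdot\alpha}\geq c$ for all $\alpha\in\supp(f)$ with $\gamma\cdot\alpha\neq\gamma\cdot\beta$ (using discreteness of $\{\gamma\cdot\alpha\}$ and finiteness of the level set $\{\gamma\cdot\alpha=\gamma\cdot\beta\}$), and then work with the fractional powers $|\tau|^{1-\gamma\cdot\beta/\gamma\cdot\alpha}$. Your route avoids the gap constant and the fractional-exponent manipulations entirely, at the cost of invoking the standard Abel-type lemma; it also yields locally uniform convergence, hence holomorphy, on an explicit polydisc. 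It is in fact very close in spirit to the alternative argument sketched in Remark \ref{rem-analytic} (convergence of $f$ as a series in $(\tilde{u},t)$ plus divisibility by $t^{\gamma\cdot\beta}$), which you have made precise; your fallback suggestion of splitting off the finite level set $\{\gamma\cdot\alpha=\gamma\cdot\beta\}$ and pulling out one factor of $t$ from the tail would also work and is closer in flavor to the paper's uniform estimate. One cosmetic point: the sum of the series is a locally uniform limit of its partial sums (polynomials), not of monomials, but this does not affect the argument.
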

\begin{proof}
Suppose we have $\epsilon > 0$ such that for all $\tau \in \c$ with $|\tau| < \epsilon$, and for all $\alpha \in \supp(f)$ with $\gamma \cdot \alpha \neq \gamma \cdot \beta$, and $i=1, \ldots, n$ 
we have:
\begin{equation} \label{equ-tau}
|\tau|^{1 - {\frac{\gamma \cdot \beta}{\gamma \cdot \alpha}}} \leq (\frac{\delta}{|\tilde{u}_i| + 1})^{\frac{1}{\gamma_i}}.
\end{equation}
Take any $\tilde{w}=(\tilde{w}_1, \ldots, \tilde{w}_n) \in \c^n$ in the open poly-disc of radius $1$ around $\tilde{u}$. That is, $|\tilde{w}_i - \tilde{u}_i| <1$ for $i=1, \ldots, n$. 
Then $|\tilde{w}_i| < |\tilde{u}_i| + 1$, for all $i$. 
If \eqref{equ-tau} holds then for all $i = 1, \ldots, n$ we have:
$$|\tilde{w}_i| |\tau|^{\gamma_i (1 - \frac{\gamma \cdot \beta}{\gamma \cdot \alpha})} \leq \delta.$$
Let us write $\alpha = (\alpha_1, \ldots, \alpha_n)$. We then obtain:
\begin{align*} \label{equ-tau2}
\prod_{i=1}^n (|\tilde{w}_i|^{\alpha_i} |\tau|^{\gamma_i \alpha_i(1 - \frac{\gamma \cdot \beta}{\gamma \cdot \alpha})}) \leq& \delta^{\alpha_1 + \cdots + \alpha_n} \\
(\prod_{i=1}^n |\tilde{w}_i|^{\alpha_i}) |\tau|^{\gamma \cdot \alpha - \gamma \cdot \beta} \leq& \delta^{\alpha_1 + \cdots + \alpha_n}.
\end{align*}
Recall that by the choice of $\delta$ (see the paragraph after Proposition \ref{prop-abs-convergent}) the series \eqref{equ-delta} is absolutely convergent. 
Note also that the the set of $\alpha \in \z^n_{\geq 0}$ with $\gamma \cdot \alpha = \gamma \cdot \beta$ is finite. So \eqref{equ-tau} holds for all $\alpha \in \supp(f)$ 
except possibly for a finite number. Thus by the comparison test we conclude that the series:
$$\sum_{\alpha \in \z^n_{\geq 0}} c_{\alpha} \tilde{w}^\alpha (t^{\gamma \cdot \alpha - \gamma \cdot \beta}).$$
is also absolutely convergent for $(\tilde{w}, \tau) \in \c^{n+1}$ where $|\tau| < \epsilon$ and $|\tilde{w}_i - \tilde{u}_i| < 1$ for all $i=1, \ldots, n$.

It remains to show that there exists $\epsilon > 0$ satisfying \eqref{equ-tau}.  We first note the following obvious facts: 
(1) The set $\{ \gamma \cdot \alpha \mid \alpha \in \z^n_{\geq 0}\}$ is discrete. 
(2) The set $\{ \alpha \in \z^n_{\geq 0} \mid \gamma \cdot \alpha = \gamma \cdot \beta \}$ is finite.  

We claim that there is a constant $c > 0$ such that for all $\alpha \in \supp(f)$ with $\gamma \cdot \alpha \neq \gamma \cdot \beta$ we have:
\begin{equation} \label{equ-c}
0 < c \leq 1 - \frac{\gamma \cdot \beta}{\gamma \cdot \alpha}.
\end{equation}
If such a $c$ does not exist then there is a sequence $\{\alpha_i\}_{i \in \n}$ consisting of distinct points in $\supp(f)$ such that $\lim_{i \to \infty} 
(1 - \frac{\gamma \cdot \beta}{\gamma \cdot \alpha_i}) = 0$. This implies that $\lim_{i \to \infty} \gamma \cdot \alpha_i = \gamma \cdot \beta$ which contradicts 
the facts (1) and (2) above. 

Let $c > 0$ be as above. Then by \eqref{equ-c}, for any $\tau \in \c$ with $|\tau| < 1$ we have:
\begin{equation} \label{equ-tau3}
|\tau|^{1 - \frac{\gamma \cdot \beta}{\gamma \cdot \alpha}} \leq |\tau|^c. 
\end{equation}
(This is because the real function $x \mapsto \tau^x$ is decreasing for $0 \leq \tau < 1$.) Now pick $\epsilon > 0$ such that:
$$\epsilon < \min\{ 1, (\frac{\delta}{|\tilde{u}_1|+1})^{\frac{1}{c\gamma_1}}, \ldots, (\frac{\delta}{|\tilde{u}_n|+1})^{\frac{1}{c\gamma_n}}\}.$$
Then if $|\tau| < \epsilon$ we have $|\tau| < 1$ and moreover for $i=1, \ldots, n$:
$$|\tau|^{1 - \frac{\gamma \cdot \beta}{\gamma \cdot \alpha}} \leq |\tau|^c < (\frac{\delta}{{|\tilde{u}_i|}+1})^{\frac{1}{\gamma_i}}.$$
This finishes the proof of the theorem.
\end{proof}

\begin{Rem} \label{rem-analytic}
Alternatively, as pointed out to the author by Megumi Harada, we can see that $\tilde{f}(\tilde{u}, t)$ is holomorphic at any $(\tilde{u}, 0) \in (\c^*)^n$ as follows.
Consider $f$ as a power series in $\tilde{u}$ and $t$:
$$f(\tilde{u}, t) = \sum_{\alpha \in \z_{\geq 0}^n} c_\alpha t^{\gamma \cdot \alpha} \tilde{u}^\alpha.$$
Since $f$ is absolutely convergent for $u$ in some neighborhood $V$ of the origin, and from the proof of Proposition \ref{prop-tilde-phi}, we can see that 
$f(\tilde{u}, t)$ is absolutely convergent in a neighborhood of $(\tilde{u}, 0)$. Also, by the choice of $\beta$, the power series $f(\tilde{u}, t)$ is divisible 
by $t^{\gamma \cdot \beta}$. It thus follows that the power series $\tilde{f} = t^{-\gamma \cdot \beta}f$ is also absolutely convergent in a neighborhood of $(\tilde{u}, 0)$. 
This proves the claim.
\end{Rem}

\section{Choice of an orthonormal basis} \label{sec-valuation}
Now let $X$ be an $n$-dimensional smooth complex projective variety. Consider an embedding of $X$ in a projective space $\p(V)$ where $V$ is a finite dimensional 
vector space. Let $E$ be the associated linear system on $X$, i.e. the image of the restriction map $V^* = H^0(\p(V), \mathcal{O}(1)) \to H^0(X, \mathcal{O}(1)_{|X})$.
Without loss of generality we assume that the restriction map is one-to-one and we identify $E$ with the dual vector space $V^*$.

Take a point $p \in X$. Let $(u_1, \ldots, u_n)$ be a system of coordinates in a neighborhood of $p$, that is, $u_1, \ldots, u_n$ are regular functions at $p$ with 
$u_1(p) = \cdots = u_n(p) = 0$ such that $du_1, \ldots, du_n$ are linearly independent at $p$. 

To this coordinate system we associate a $\z^n$-valued valuation $v$ on the field of rational functions $\c(X)$ as follows: Let $f \in \mathcal{O}_{X, p}$ 
be a regular function at $p$. As before, $f$ can be represented by a power series $\sum_{\alpha \in \z_{\geq 0}^n} c_\alpha u^\alpha$, where $u = (u_1, \ldots, u_n)$. 
Fix a total order $>$ on the lattice $\z^n$ which respects addition, e.g. a lexicographic order. Let us define the value of the valuation $v$ at $f$ to be:
\begin{equation} \label{equ-valuation}
v(f) = \min\{ \alpha \mid c_\alpha \neq 0\}.
\end{equation}
Here the minimum is taken with respect to the total oder $>$. As usual in algebra one extends $v$ to the whole field $\c(X)$ by defining $v(f/g) = v(f) - v(g)$ for $f, g \in \mathcal{O}_{X, p}$.
It is straightforward to verify that $v$ satisfies the defining axioms of a valuation.

Fix a section $\tau \in E$ such that $\tau(p) \neq 0$. The choice of the section $\tau$ is about trivializing the associated line bundle in a neighborhood of the point $p$. 
We then obtain a map from $E \setminus \{0\}$ to $\z^n$ given by:
$$\sigma \in E \mapsto v(\sigma/\tau).$$ 
Let $\A$ denote the image of this map in $\z^n$, that is:
\begin{equation} \label{equ-def-A}
\A = \{ v(\sigma / \tau) \mid 0 \neq \sigma \in E \}.
\end{equation}
One shows, using the properties of a valuation, that the set $\A$ is finite and in fact $|\A| = r = \dim(E)$ (for example see \cite[Proposition 2.6]{KKh-Annals}).

Now fix a Hermitian product $H$ on the dual vector space $E^*$.
One can construct a basis
$\{\eta_1, \ldots, \eta_r\}$ of $E$ such that its dual basis is an orthonormal basis for $E^*$ and moreover:
$$\{v(\eta_1/\tau), \ldots, v(\eta_r / \tau)\} = \A.$$ 
(The proof is a Gram-Schmidt orthonormalization argument, see \cite[Lemma 3.23]{HK} for a proof.)

\begin{Rem} \label{rem-basis-dual-basis}
If we identify $E$ with its dual $E^*$, via the Hermitian product $H$, then $\{\eta_1, \ldots, \eta_r\}$ is indeed an orthonormal basis with respect to the induced 
Hermitian product.
\end{Rem}

\begin{Rem} \label{rem-eta_j}
In fact, for our purposes in Section \ref{sec-Kahler} it suffices if $\{\eta_1^*, \ldots, \eta_r^*\}$ is an orthonormal basis and
the set of differences of elements in $\{v(\eta_1 / \tau), \ldots, v(\eta_r / \tau)\}$ generates the lattice $\z^n$.
\end{Rem}

Let us fix such an orthornormal basis $\{\eta_1, \ldots, \eta_r\}$. For each $j=1, \ldots, r$ put:
$$f_j = \frac{\eta_j}{\tau} \quad \textup{and} \quad \beta_j = v(\eta_j / \tau).$$
Also let
\begin{equation} \label{equ-f_i}
f_j = \sum_{\alpha \in \z^n_{\geq 0}} c_{\alpha, j} u^\alpha,
\end{equation}
be the power series representation of $f_j$ in $u=(u_1, \ldots, u_n)$.

One can show the following:
\begin{Prop} \label{prop-perturb-gamma}
We can find a vector $\gamma \in \n^n$ such that for each $j=1, \ldots, r$, the vector
$\beta_j$ is the unique point in $\supp(f_j)$ where the minimum of $\{ \gamma \cdot \alpha \mid \alpha \in \supp(f_j) \}$ is attained.  
\end{Prop}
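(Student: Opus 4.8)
The plan is to write down $\gamma$ completely explicitly, so that, reading coordinates from the last one upward, its weighting of monomials imitates the valuation $v$ on the finitely many power series $f_1,\dots,f_r$ at once. First I would record the only input facts needed. Since $\tau(p)\ne 0$, each $f_j=\eta_j/\tau$ is regular at $p$, so $\supp(f_j)\subseteq\z^n_{\geq 0}$, and in particular $\beta_j=v(f_j)\in\z^n_{\geq 0}$. Taking $>$ to be the lexicographic order as in the running example, the defining formula \eqref{equ-valuation} says that $\beta_j$ is the $>$-smallest element of $\supp(f_j)$, which for the lexicographic order means precisely: whenever $\alpha\in\supp(f_j)$ differs from $\beta_j$, the first coordinate in which they disagree is strictly larger in $\alpha$.

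Next I would set $M_k=\max_{1\le j\le r}(\beta_j)_k\in\z_{\geq 0}$ and define $\gamma=(\gamma_1,\dots,\gamma_n)$ by the downward recursion $\gamma_i=1+\sum_{k=i+1}^{n}M_k\gamma_k$ (so $\gamma_n=1$). Then $\gamma\in\n^n$, and by construction $\gamma_i-\sum_{k=i+1}^{n}M_k\gamma_k=1>0$ for every $i$. This ``place-value'' choice of weights is exactly what lets a single vector simulate the lexicographic tie-breaking over all $j$ simultaneously.

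The core of the argument is then one line. Fix $j$ and $\alpha\in\supp(f_j)$ with $\alpha\ne\beta_j$; let $i$ be the first coordinate in which they differ, so $\alpha_k=(\beta_j)_k$ for $k<i$ while $\alpha_i\ge(\beta_j)_i+1$. Using only $\alpha_k\ge 0$ I would estimate
\[
\gamma\cdot\alpha-\gamma\cdot\beta_j=\gamma_i\bigl(\alpha_i-(\beta_j)_i\bigr)+\sum_{k=i+1}^{n}\gamma_k\bigl(\alpha_k-(\beta_j)_k\bigr)\ \ge\ \gamma_i-\sum_{k=i+1}^{n}\gamma_k(\beta_j)_k\ \ge\ \gamma_i-\sum_{k=i+1}^{n}\gamma_k M_k\ =\ 1\ >\ 0 .
\]
Hence for each $j$ the vector $\beta_j$ is the unique minimiser of $\alpha\mapsto\gamma\cdot\alpha$ on $\supp(f_j)$, which is exactly the assertion.

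The point to be careful about — and really the only thing that makes the statement non-trivial — is that each $\supp(f_j)$ is in general infinite, so one cannot merely ``take $\gamma$ large enough'' after inspecting a finite set. What rescues the estimate is that $\supp(f_j)$ lies in the nonnegative orthant: this forces the lower-order contribution $\sum_{k>i}\gamma_k(\alpha_k-(\beta_j)_k)$ to be bounded below by $-\sum_{k>i}\gamma_k(\beta_j)_k$, a finite quantity depending only on $\beta_j$ and not on $\alpha$; growing the $\gamma_i$ fast enough from $i=n$ downward then dominates it. I would present the proof for the lexicographic order since that is the order fixed in the running definition; the case analysis on ``the first differing coordinate'' is what uses that choice concretely.
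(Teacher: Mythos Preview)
Your argument is correct for the lexicographic order and gives a completely explicit $\gamma$; the key observation that $\supp(f_j)\subset\z^n_{\geq 0}$ bounds the ``lower-order'' contribution $\sum_{k>i}\gamma_k(\alpha_k-(\beta_j)_k)$ below by a quantity depending only on $\beta_j$, not on $\alpha$, is exactly what makes the infinite support harmless. The paper's own proof is different and much terser: for each $j$ it passes to the convex hull $\Delta_j=\conv(\supp(f_j))\subset\r^n_{\geq 0}$ and simply chooses $\gamma\in\n^n$ so that the hyperplane $\{x:\gamma\cdot x=\gamma\cdot\beta_j\}$ meets $\Delta_j$ only at $\beta_j$, simultaneously for all $j$. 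Your approach buys an explicit formula and a self-contained argument with no appeal to convex geometry (in particular, no implicit claim that $\beta_j$ is a vertex of $\Delta_j$ with a rational supporting hyperplane in the positive orthant); the paper's approach, once those details are supplied, has the advantage of working for an arbitrary additive total order on $\z^n$, not only the lexicographic one you single out. Since in the paper the valuation is in practice instantiated with the lexicographic order (and later in Section \ref{sec-simplex-NO-body} explicitly so), your version suffices for everything that is actually used downstream.
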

\begin{proof}
For each $j$ let $\Delta_j = \conv(\supp(f_j)) \subset \r_{\geq 0}^n$. Choose $\gamma \in \n^n$ such that for each $j$ the hyperplane 
$\{ x \in \r^n \mid \gamma \cdot x = \gamma \cdot \beta_j\}$ intersects the convex polyhedron $\Delta_j$ only at $\beta_j$.
\end{proof}
 
As in Section \ref{sec-power-series} put:
\begin{equation} \label{equ-tilde-f_i}
\tilde{f}_j = t^{-\gamma \cdot \beta_j} f_j = \sum_{\alpha \in \z^n_{\geq 0}} c_{\alpha, j} \tilde{u}^\alpha (t^{\gamma \cdot \alpha - \gamma \cdot \beta_j}).
\end{equation}
 

We thus obtain the following:
\begin{Prop} \label{prop-const-term-monomial}
For each $j$, the constant term with respect to $t$ in the power series $\tilde{f}_j(\tilde{u}, t)$ (i.e. the term with no $t$) is a monomial in $\tilde{u}$, namely $c_{j} \tilde{u}^{\beta_j}$.
\end{Prop}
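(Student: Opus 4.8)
The plan is to extract the claimed statement directly from the power series expression already derived in equation \eqref{equ-tilde-f_i} together with the uniqueness property of $\beta_j$ guaranteed by Proposition \ref{prop-perturb-gamma}. Starting from
$$\tilde{f}_j(\tilde{u}, t) = \sum_{\alpha \in \z^n_{\geq 0}} c_{\alpha, j}\, \tilde{u}^\alpha\, t^{\gamma \cdot \alpha - \gamma \cdot \beta_j},$$
I would group the terms according to the exponent of $t$. The exponent appearing is $\gamma \cdot \alpha - \gamma \cdot \beta_j$, and for $\alpha \in \supp(f_j)$ this is $\geq 0$ by the definition of $\beta_j$ as a minimizer of $\alpha \mapsto \gamma \cdot \alpha$ over $\supp(f_j)$ (Section \ref{sec-power-series}, where $\gamma$ was chosen in $\n^n$). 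So the ``constant term with respect to $t$'' — the sum of those monomials $c_{\alpha,j}\tilde{u}^\alpha$ for which the $t$-exponent vanishes — consists exactly of the $\alpha \in \supp(f_j)$ with $\gamma \cdot \alpha = \gamma \cdot \beta_j$.

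The key step is then to invoke the uniqueness clause of Proposition \ref{prop-perturb-gamma}: by that proposition $\gamma$ was chosen so that $\beta_j$ is the \emph{unique} point of $\supp(f_j)$ at which $\gamma \cdot \alpha$ attains its minimum. Hence the only $\alpha \in \supp(f_j)$ with $\gamma \cdot \alpha = \gamma \cdot \beta_j$ is $\alpha = \beta_j$ itself, and the constant term in $t$ reduces to the single monomial $c_{\beta_j, j}\, \tilde{u}^{\beta_j}$. Setting $c_j := c_{\beta_j, j}$ — which is nonzero precisely because $\beta_j \in \supp(f_j)$ — gives the asserted form $c_j \tilde{u}^{\beta_j}$.

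There is no real obstacle here; the content has been front-loaded into Proposition \ref{prop-perturb-gamma} (existence of a good weight vector $\gamma$) and into the convergence result Theorem \ref{th-tilde-f-convergent} (which legitimizes treating $\tilde f_j$ as an honest power series in $(\tilde u, t)$ near the special fiber, so that ``collecting the $t^0$ terms'' is a meaningful operation and not just formal manipulation). The only minor point worth stating carefully is that the regrouping is justified by absolute convergence, so that rearranging the double sum into a sum over powers of $t$ is valid; I would mention this in one line, citing Theorem \ref{th-tilde-f-convergent}. Everything else is immediate substitution, so the ``proof'' is essentially a two-sentence deduction from the two preceding propositions.
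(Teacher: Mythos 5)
Your proposal is correct and matches the paper, which states this proposition with no separate proof precisely because it is the immediate consequence of formula \eqref{equ-tilde-f_i} together with the choice of $\gamma$ in Proposition \ref{prop-perturb-gamma} that you invoke (the uniqueness of $\beta_j$ as minimizer of $\alpha \mapsto \gamma\cdot\alpha$ on $\supp(f_j)$ forces the $t^0$ part to be the single monomial $c_{\beta_j,j}\tilde{u}^{\beta_j}$). Your added remark that absolute convergence (Theorem \ref{th-tilde-f-convergent}) justifies collecting terms by powers of $t$ is a reasonable extra sentence, but it is the same argument the paper intends.
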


Note that in the notation of the equation \eqref{equ-tilde-f_i} we have $c_j = c_{\beta_j, j}$. 

From Proposition \ref{th-tilde-f-convergent} we have the following:
\begin{Cor}
For each $(\tilde{u}, 0) \in X_0 = (\c^*)^n \times \{0\}$, all the power series $\tilde{f}_j$, $j=1, \ldots, r$, are absolutely convergent in a neighborhood of $(\tilde{u}, 0)$ in $\c^{n+1}$.
In particular all the $\tilde{f}_j$ are holomorphic functions in an open subset in $\c^{n+1}$ containing $(\c^*)^n \times \{0\}$.
\end{Cor}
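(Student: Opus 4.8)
The plan is to recognize that this corollary is an immediate consequence of Theorem \ref{th-tilde-f-convergent} applied to each of the finitely many meromorphic functions $f_1, \ldots, f_r$, together with the observation made in Proposition \ref{prop-perturb-gamma} that the weight vector $\gamma \in \n^n$ has been chosen so that for every $j$ the minimum of $\alpha \mapsto \gamma \cdot \alpha$ on $\supp(f_j)$ is attained (indeed uniquely) at $\beta_j = v(\eta_j/\tau)$. Thus $\beta_j$ plays the role of the point ``$\beta$'' in Section \ref{sec-power-series} for the function $f = f_j$, and the definition $\tilde f_j = t^{-\gamma\cdot\beta_j} f_j$ in \eqref{equ-tilde-f_i} matches exactly the definition of $\tilde f$ there.

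First I would fix an arbitrary point $(\tilde u, 0) \in (\c^*)^n \times \{0\}$. For each $j = 1, \ldots, r$, since $f_j$ is a regular function at $p$ (it is $\eta_j/\tau$ with $\tau(p) \neq 0$, so it is holomorphic at $p$), Theorem \ref{th-tilde-f-convergent} applies with $f = f_j$ and the corresponding $\beta = \beta_j$: it tells us that the power series of $\tilde f_j$ is absolutely convergent in some open neighborhood $W_j$ of $(\tilde u, 0)$ in $\c^{n+1}$, and hence that $\tilde f_j$ is holomorphic there. Intersecting the finitely many neighborhoods $W_1, \ldots, W_r$ gives a single open neighborhood $W = \bigcap_{j=1}^r W_j$ of $(\tilde u, 0)$ on which all the $\tilde f_j$ are simultaneously absolutely convergent, proving the first assertion.

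For the second assertion, I would simply note that holomorphy is a local (open) condition: the set $S_j = \{ x \in \c^{n+1} : \tilde f_j \text{ is holomorphic at } x\}$ is open, it contains $(\c^*)^n \times \{0\}$ by the first part (applied at every point of that set), and it also contains $\tilde\phi(U \times \c^*)$ since there $\tilde f_j = t^{-\gamma\cdot\beta_j} f_j$ is a product of the holomorphic function $f_j$ (pulled back) and the nowhere-vanishing holomorphic function $t^{-\gamma\cdot\beta_j}$ on $\c^*$. Then $S = \bigcap_{j=1}^r S_j$ is an open subset of $\c^{n+1}$ containing $(\c^*)^n \times \{0\}$ on which every $\tilde f_j$ is holomorphic, which is the claim.

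There is essentially no obstacle here: the only thing one must be careful about is the bookkeeping identifying the generic ``$\beta$'' of Section \ref{sec-power-series} with the valuation values $\beta_j$, and in particular verifying that $\gamma$ as produced in Proposition \ref{prop-perturb-gamma} indeed realizes each $\beta_j$ as a $\gamma$-minimal exponent of $\supp(f_j)$ — but this is exactly what that proposition asserts. The finiteness of the index set $\{1, \ldots, r\}$, which is $\dim E < \infty$, is what allows us to pass from the pointwise (per-$j$) statement to a uniform open neighborhood; no compactness or further analysis is needed.
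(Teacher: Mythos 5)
Your proposal is correct and follows essentially the same route as the paper, which deduces the corollary directly from Theorem \ref{th-tilde-f-convergent} applied to each $f_j = \eta_j/\tau$ (holomorphic at $p$ since $\tau(p)\neq 0$), with $\gamma$ fixed as in Proposition \ref{prop-perturb-gamma} so that $\beta_j$ is a $\gamma$-minimal exponent of $\supp(f_j)$. The only additions you make — intersecting the finitely many neighborhoods and noting that holomorphy is an open condition — are exactly the routine bookkeeping the paper leaves implicit.
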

 

We will be interested in the valuations $v$ such that the following holds: the set of differences of elements in $\A$, namely:
$$\{ \alpha - \beta \mid \alpha, \beta \in \z\},$$
generates the lattice $\z^n$. Under this condition, in Section \ref{sec-Kahler} we will construct an embedding of the family $\X$ in a product $\c\p^{r-1} \times \c$. 
We will use this to define a K\"ahler form on the whole family.

\begin{Rem} \label{rem-diff-A-generates}
Suppose $p \in X$ and a section $\tau \in E$ with $\tau(p) \neq 0$ is given.
Let us show that one can always choose a coordinate system $(u_1, \ldots, u_n)$ at $p$ such that 
the corresponding set $\A$ of initial exponents, as in \eqref{equ-def-A}, has the property that the differences of elements in $\A$ generates $\z^n$.

Let $\sigma_1, \ldots, \sigma_n \in E$ be sections such that: 
\begin{itemize}
\item[(1)] For each $i=1, \ldots, n$, we have $\sigma_i(p) = 0$. 
\item[(2)] The hypersurfaces $D_i = \{x \in X \mid \sigma_i(x) = 0 \}$, $i=1, \ldots, n$, are smooth at $p$. 
\item[(3)] The hypersurfaces $D_1, \ldots, D_n$ are transverse at $p$.
\end{itemize}
For each $i = 1, \ldots, n$ define:
$$u_i = {\frac{\sigma_i}{\tau}}.$$
Then the $u_i$ are rational functions on $X$ that are regular at $p$, in other words $u_i \in \mathcal{O}_{X, p}$.
From the properties (1)-(3) above and in particular the transversality of the $D_i$ we conclude that 
$(u_1, \ldots, u_n)$ form a system of coordinates in a neighborhood of the point $p$. 
We note that for each $i = 1, \ldots, n$, $v(\sigma_i / \tau) = v(u_i) = e_i$ where $e_i$ is the $i$-th standard basis element in $\r^n$.  
Moreover $v(\tau / \tau) = v(1) = 0$. Thus
the finite subset $\A$ contains the standard basis as well as the origin. In particular, the differences of the elements in $\A$ generate the lattice $\z^n$.
\end{Rem}
 
\section{Embedding the family in a projective space and K\"ahler structure on the family} \label{sec-Kahler}
As before let $X$ be a smooth complex projective variety together with a linear system $E$ 
(i.e. a finite dimensional vector space of holomorphic sections of a line bundle on $X$)
such that $E$ gives rise to a Kodaira embedding $X \hookrightarrow \p(E^*)$ where $E^*$ is the dual space of $E$.

Fix a Hermitian product $H$ on $E^*$. The choice of $H$ gives a Fubini-Study K\"ahler form $\Omega_H$ on $\p(E^*)$.
We let $\omega = \omega_H$ denote the pull-back of $\Omega_H$ to $X$ under the embedding $X \hookrightarrow \p(E^*)$. 

Using the notation in Section \ref{sec-valuation} let $f_1, \ldots, f_r$ be the rational function on $X$ corresponding to the orthonormal basis $\{\eta_1, \ldots, \eta_r\}$
and a section $\tau \in E$ with $\tau(p) \neq 0$ (the choice of section $\tau$ corresponds to a trivialization of the associated line bundle in a neighborhood of the point $p$).
Moreover, we assume that the differences of elements in $\A$ generates the lattice $\z^n$.
 
Let us identify $E$ with $\c^r$ using the basis $\{\eta_1, \ldots, \eta_r\}$. 
We will denote the coordinates on $\c^r$ (respectively homogeneous coordinates on $\c\p^{r-1}$) by $(z_1, \ldots, z_r)$ (respectively $(z_1: \cdots :z_r)$). 

We define the map $F: \X \to \c\p^{r-1} \times \c$ as follows. Let $\tilde{x} \in \X$ then:
$$F(\tilde{x}) = ((\tilde{f}_1(\tilde{x}) : \cdots : \tilde{f}_r(\tilde{x})), \pi(\tilde{x})).$$
In other words, if $(x, t) \in X \times \c^* \subset \X$ then:
$$F(x, t) = ((t^{-\gamma_1}f_1(x) : \cdots : t^{-\gamma_r}f_r(x)), t),$$
and if $(\tilde{u}, t) \in \tilde{U} \subset \c^{n+1}$ then:
$$F(\tilde{u}, t) = ((\tilde{f}_1(\tilde{u}, t) : \cdots : \tilde{f}_r(\tilde{u}, t)), t).$$
Recall that $\tilde{U}$ is the open subset in $\X$ containing the zero fiber $X_0 = (\c^*)^n$ (see Section \ref{sec-family}) and the $\tilde{f}_j$ are the 
analytic extensions of $t^{-\gamma_j}f_j$ to the whole family $\X$ (see Section \ref{sec-power-series}).

\begin{Rem}
Note that $F$ is not defined at the locus in $X \times \c^*$ where $\tau = 0$. It is easy to check, as we now show, that the function $F$ is independent of the 
choice of the section $\tau$ and hence $F$ can be extended to a holomorphic map on the whole family $\X$.
Choose another section $\tau' \in E$ with $\tau'(p) \neq 0$ 
and put $\tilde{f}'_j = t^{-\gamma \cdot \beta_j} (\eta_j / \tau')$. Then we have $\tilde{f}'_j (x, t) = (\tau(x) / \tau'(x)) \tilde{f}_j(x, t)$ for each $j$ and 
any $x \in X$ with $\tau(x) \neq 0$, $\tau'(x) \neq 0$. It follows that:
$$(\tilde{f}_1(x, t): \cdots : \tilde{f}_r(x, t)) = (\tilde{f}'_1(x,t) : \cdots : \tilde{f}'_r(x, t)).$$
This implies that $F$ is independent of the choice of $\tau$ as claimed. 
\end{Rem}

\begin{Th} \label{th-F-immersion}
The map $F$ is an immersion, i.e. its derivative at every point has maximum rank $n+1$. 
\end{Th}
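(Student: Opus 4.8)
The plan is to verify that $F$ is an immersion by checking its derivative on each of the two open charts covering $\X$ separately, and then observing that away from $t = 0$ the question reduces to the fact that the original embedding $X \hookrightarrow \p(E^*)$ is an immersion (indeed an embedding), while on the chart $\tilde U$ containing the special fiber the question reduces to the fact that $\psi_{\A, c}$ is an immersion on $(\c^*)^n$. First I would treat the open subset $X \times \c^* \subset \X$: there $F(x,t) = ((t^{-\gamma_1}f_1(x):\cdots:t^{-\gamma_r}f_r(x)), t)$, and since multiplying homogeneous coordinates by the common factor $t^{\gamma\cdot\beta_1}$ (or any nonzero holomorphic function) does not change the point in $\c\p^{r-1}$, $F$ agrees on this chart with $(x,t)\mapsto (\iota(x), t)$ where $\iota$ is (a reparametrization near $p$ of) the given Kodaira embedding composed with the torus rescaling. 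Because $\iota$ is an immersion on $X$ and the second coordinate records $t$, the differential $dF_{(x,t)}$ has rank $n+1$; the $\partial/\partial t$ direction contributes the independent last coordinate, and the $T_xX$ directions inject into $T\c\p^{r-1}$.

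The substance is the chart $\tilde U \subset \c^{n+1}$, with coordinates $(\tilde u_1,\ldots,\tilde u_n, t)$. Here $F(\tilde u, t) = ((\tilde f_1(\tilde u, t):\cdots:\tilde f_r(\tilde u, t)), t)$, and by Theorem \ref{th-tilde-f-convergent} (and the Corollary following Proposition \ref{prop-const-term-monomial}) each $\tilde f_j$ is holomorphic on a neighborhood of $(\c^*)^n \times \{0\}$, with $\tilde f_j(\tilde u, 0) = c_j \tilde u^{\beta_j}$ by Proposition \ref{prop-const-term-monomial}. For points with $t \neq 0$ the previous paragraph already applies (the two charts overlap there), so it remains to check the points $(\tilde u, 0)$ of the special fiber. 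The $\partial/\partial t$ component of $dF$ is automatically independent of the rest because of the last coordinate of $F$, so it suffices to show that the restriction of $F(\cdot, 0)$ to $X_0 = (\c^*)^n$ is an immersion. But $F(\tilde u, 0) = ((c_1 \tilde u^{\beta_1}:\cdots:c_r\tilde u^{\beta_r}), 0)$ is exactly the monomial map $\psi_{\A, c}$ of Section \ref{sec-prelim}, and since we assumed the differences of elements of $\A$ generate $\z^n$, $\psi_{\A, c}$ is an isomorphism of $(\c^*)^n$ onto its orbit $O_{\A, c}$, in particular an immersion. Hence $dF$ has rank $n$ on the fiber directions plus $1$ on the $t$-direction, giving rank $n+1$ at every point of the special fiber.

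The one point requiring care — and the step I expect to be the main obstacle — is making the rank count rigorous at the interface: one must confirm that the $\partial/\partial t$-derivative of the first ($\c\p^{r-1}$) component does not conspire with the fiber derivatives to drop the rank. The clean way to see this is to note that $F = (G, \pi)$ where $G: \X \to \c\p^{r-1}$ and $\pi: \X \to \c$, so $dF_{\tilde x}(\xi) = (dG_{\tilde x}(\xi), d\pi_{\tilde x}(\xi))$; since $d\pi$ is surjective onto $T\c$ at every point (Proposition \ref{prop-family}, part (3)), $\ker dF_{\tilde x} = \ker dG_{\tilde x} \cap \ker d\pi_{\tilde x}$, and $\ker d\pi_{\tilde x}$ is precisely the tangent space to the fiber $X_{\pi(\tilde x)}$. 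So $dF_{\tilde x}$ has rank $n+1$ as soon as $dG$ restricted to the fiber through $\tilde x$ is injective, i.e. as soon as $G|_{X_t}$ is an immersion for every $t$. For $t \neq 0$ this is the Kodaira embedding; for $t = 0$ it is the monomial embedding $\psi_{\A,c}$, which is an immersion under the standing hypothesis that the differences of $\A$ generate $\z^n$. This reduces everything to two already-established facts and completes the argument.
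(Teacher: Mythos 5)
Your argument is correct, and for $t \neq 0$ it coincides with the paper's proof (factor $F$ through the Kodaira embedding followed by the coordinate-rescaling automorphism of $\c\p^{r-1} \times \c^*$). Where you diverge is at the special fiber: the paper works in the affine chart $z_r \neq 0$, writes out the Jacobian of $F$ at $(\tilde u, 0)$ explicitly, and after scaling rows and columns reduces the rank claim to the fact that the matrix of differences $\lambda_j = \beta_j - \beta_r$ has rank $n$ because these differences generate $\z^n$. You instead use the structural decomposition $F = (G, \pi)$, observe that $\ker dF = \ker dG \cap \ker d\pi$ with $\ker d\pi$ equal to the fiber tangent space (since $\pi$ is a submersion by Proposition \ref{prop-family}), and thereby reduce the whole theorem, uniformly in $t$, to the statement that $G$ restricted to each fiber is an immersion; at $t = 0$ you then invoke the Section \ref{sec-prelim} fact that $\psi_{\A,c}$ is an isomorphism onto its orbit $O_{\A,c}$. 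This is a cleaner and more conceptual reduction, and it makes transparent why the $t$-direction can never conspire to drop the rank. The trade-off is that your endgame leans on an assertion the paper states without proof (that $\psi_{\A,c}$ is an isomorphism onto the orbit, plus the standard fact that a torus orbit is a smooth locally closed subvariety, so that an isomorphism onto it is an immersion of $(\c^*)^n$ into $\c\p^{r-1}$), whereas the paper's explicit Jacobian computation is self-contained and in effect proves that immersion statement on the spot: the rank-$n$ claim for the monomial map is exactly the upper-left block of the matrix \eqref{equ-Jac-F-2}. If you want your version to be fully self-contained, replace the appeal to the orbit isomorphism by the one-line check that the differential of $\tilde u \mapsto (c_1\tilde u^{\beta_1} : \cdots : c_r \tilde u^{\beta_r})$ in the chart $z_r \neq 0$ has rank $n$ because the vectors $\beta_j - \beta_r$ span $\r^n$ (they generate $\z^n$), which is precisely the paper's computation.
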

\begin{proof}
First we show that the derivative of $F$ has maximum rank at every point in $X \times \c^*$. Take $\tilde{x} = (x, t) \in X \times \c^*$.
Without loss of generality we assume $\tau(x) \neq 0$ and hence $F(x, t)$ is given by:
$$F(x, t) = ((t^{-\gamma \cdot \beta_1}f_1(x) : \cdots : t^{-\gamma \cdot \beta_r}f_r(x)), t).$$
We note that the map:
$$(x, t) \mapsto ((f_1(x) : \cdots : f_r(x)), t),$$ is an embedding of $X \times \c^*$ into $\c\p^{r-1} \times \c^*$ simply because $\{\eta_1, \ldots, \eta_r\}$ is a basis 
for $V$. Also the map:
$$((z_1 : \cdots : z_r), t) \mapsto ((t^{-\gamma \cdot \beta_1}z_1 : \cdots : t^{-\gamma \cdot \beta_r}z_r ), t),$$
is a biholomorphism from $\c\p^{r-1} \times \c^*$ to itself. It follows that the map $F$ which is the composition of these two maps has maximum rank $n+1$ at $(x, t)$.
It remains to show that the map $F$ has maximum rank at every point of the special fiber $X_0 = (\c^*)^n$. Take $\tilde{x} = (\tilde{u}, 0) \in X_0$. 
Recall that in a neighborhood of $X_0$ each $\tilde{f}_j$ is given by a convergent power series in $\tilde{u}$ and $t$ of the form:
$$\tilde{f}_j(\tilde{u}, t) = c_j \tilde{u}^{\beta_j} + \textup{ higher terms containing } t, \quad c_j \neq 0.$$ 
Cleary for each $j=1, \ldots, r$, we have $\tilde{f}_j(\tilde{u}, 0) = c_j \tilde{u}^{\beta_j} \neq 0$. Thus in the affine chart $z_r \neq 0$ the map $F$ is given by:
$$F(\tilde{u}, t) = ((\frac{\tilde{f}_1(\tilde{u}, t)}{\tilde{f}_r(\tilde{u}, t)}, \ldots, \frac{\tilde{f}_{r-1}(\tilde{u},t)}{\tilde{f}_r(\tilde{u},t)}), t).$$
Note that each $\tilde{f}_j / \tilde{f}_r$ is a convergent power series in a neighborhood of $(\tilde{u}, 0)$ of the form:
$$\frac{\tilde{f}_j(\tilde{u}, t)}{\tilde{f}_r(\tilde{u}, t)} = d_j \tilde{u}^{\lambda_j} + \textup{ higher terms containing } t,$$
where $d_j = c_j / c_r$ and $\lambda_j = \beta_j - \beta_r$.  We thus can compute the Jacobian of $F$ at $(\tilde{u}, 0)$ to be:
\begin{equation} \label{equ-Jac-F}
JF(\tilde{u}, 0) = \left[ 
\begin{matrix}
\lambda_{11} d_{1} \tilde{u}^{\lambda_1 - e_1} & \cdots & \lambda_{1n} d_{1} \tilde{u}^{\lambda_1 - e_n} & * \\
\lambda_{21} d_{2} \tilde{u}^{\lambda_2 - e_1} & \cdots & \lambda_{2n} d_{2} \tilde{u}^{\lambda_2 - e_n} & * \\
& \cdots & & \\
\lambda_{r-1,1} d_{r-1} \tilde{u}^{\lambda_{r-1} - e_1} & \cdots & \lambda_{r-1,n} d_{r-1} \tilde{u}^{\lambda_{r-1} - e_n} & * \\
0 & \cdots & 0 & 1 \\ 
\end{matrix}
\right].
\end{equation}  
Here $\lambda_j = (\lambda_{j1}, \ldots, \lambda_{jn})$, $j=1, \ldots, r-1$, and also $e_i$, $i=1, \ldots, n$, denotes the $i$-th standard basis element in $\r^n$.
We claim that the matrix \eqref{equ-Jac-F} has full rank equal to $n+1$. To prove this note that the rank of a matrix does not change if each row or column is multiplied with a nonzero scalar. 
Multiplying the $i$-th column of the matrix in \eqref{equ-Jac-F} by the $\tilde{u}_i$ and the $j$-th row by $1 / (d_{j} \tilde{u}^{\lambda_j})$ we obtain the matrix:
\begin{equation} \label{equ-Jac-F-2}
\left[ 
\begin{matrix}
\lambda_{11} & \cdots & \lambda_{1n} & * \\
\lambda_{21} & \cdots & \lambda_{2n} & * \\
& \cdots & & \\
\lambda_{r1} & \cdots & \lambda_{rn} & * \\
0 & \cdots & 0 & 1 \\ 
\end{matrix}
\right].
\end{equation}
This matrix has rank $n+1$ because the upper left $n \times r$ submatrix has rank $n$.
This is because by assumption the set $\{\lambda_1, \ldots, \lambda_r\} = \{\beta_1 - \beta_r, \ldots, \beta_{r-1} - \beta_r \}$ 
generates $\z^n$. This proves the claim. 
\end{proof}

\begin{Rem}
One can show that $F$ is one-to-one and hence an embedding. In fact,
the image of $F$ is an algebraic variety.
\end{Rem}

Let $\tilde{\Omega}$ be the K\"ahler form on $\c\p^{r-1} \times \c$ which is 
the product of the standard Fubini-Study K\"ahler form on $\c\p^{r-1}$ and the standard K\"ahler form $\frac{i}{2} dz \wedge d\bar{z}$ on $\c$.
From Theorem \ref{th-F-immersion} it follows that the pull-back $\tilde{\omega} = F^*(\tilde{\Omega})$ is a K\"ahler form on the family $\X$.

For each $t \in \c$ let $\omega_t$ denote the restriction of $\tilde{\omega}$ to the fiber $X_t$. The next statements follow from the definition of $F$ and $\tilde{\omega}$.
\begin{Prop} \label{prop-omega-1-omega-0}
(1) The form $\omega_1$ coincides with the original K\"ahler form $\omega$ on $X \cong X_1$. (2) 
The form $\omega_0$ is the toric K\"ahler form $\omega_{\A, c}$ associated to the finite subset $\A \subset \z^n$ and $c = (c_1, \ldots, c_r) \in (\c^*)^r$.
\end{Prop}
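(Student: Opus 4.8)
The plan is to identify the restriction of the map $F$ to each of the two fibers $X_1$ and $X_0$ explicitly, and then pull back the product K\"ahler form $\tilde{\Omega}$. The basic observation that makes both parts short is that on any fiber $X_t = \pi^{-1}(t)$ the last coordinate is constant equal to $t$, so the summand $\frac{i}{2}\,dz\wedge d\bar z$ of $\tilde{\Omega}$ restricts to zero on $X_t$. Consequently $\omega_t = \tilde{\omega}|_{X_t}$ is nothing but the pull-back of the standard Fubini--Study form $\Omega$ on $\c\p^{r-1}$ under the composition of $F|_{X_t}$ with the projection $\c\p^{r-1}\times\c \to \c\p^{r-1}$.

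For (1), I would note that for $(x,1)\in X\times\{1\}\subset\X$ we have $F(x,1) = ((f_1(x):\cdots:f_r(x)),1)$, and since $f_j = \eta_j/\tau$ the homogeneous coordinates $(f_1(x):\cdots:f_r(x))$ equal $(\eta_1(x):\cdots:\eta_r(x))$ after clearing the common factor $\tau(x)$; by the $\tau$-independence of $F$ already observed this extends to all of $X_1\cong X$. Under the identification $E\cong\c^r$ via the basis $\{\eta_1,\dots,\eta_r\}$ this is precisely the Kodaira embedding $X\hookrightarrow\p(E^*)$ written in the coordinates dual to $\{\eta_j\}$. Because $\{\eta_j^*\}$ is $H$-orthonormal (Remark \ref{rem-basis-dual-basis}), the Fubini--Study form $\Omega_H$ on $\p(E^*)$ is exactly the standard Fubini--Study form $\Omega$ in these coordinates, so the pull-back of $\Omega$ under $F|_{X_1}$ coincides with the pull-back of $\Omega_H$ under the Kodaira embedding, which is by definition $\omega = \omega_H$.

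For (2), I would invoke Proposition \ref{prop-const-term-monomial}: for $(\tilde{u},0)\in X_0 = (\c^*)^n\times\{0\}$ one has $\tilde{f}_j(\tilde{u},0) = c_j\tilde{u}^{\beta_j}$ with $c_j\neq 0$, so $F(\tilde{u},0) = ((c_1\tilde{u}^{\beta_1}:\cdots:c_r\tilde{u}^{\beta_r}),0)$. This is exactly the monomial orbit map $\psi_{\A,c}$ for $\A = \{\beta_1,\dots,\beta_r\}$ and $c = (c_1,\dots,c_r)\in(\c^*)^r$ (the standing hypothesis that the differences of the $\beta_j$ generate $\z^n$ ensures this is an isomorphism onto its image, as in Section \ref{sec-prelim}). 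Hence $\omega_0$ is the pull-back of $\Omega$ under $\psi_{\A,c}$, which is by definition $\omega_{\A,c}$.

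The computations here are all bookkeeping; the only point demanding a little care is part (1), namely keeping track of the identifications between $E$, its dual $E^*$, and $\c^r$, and checking that passing to the $H$-orthonormal basis $\{\eta_j\}$ really does turn $\Omega_H$ into the standard Fubini--Study form $\Omega$ --- this is precisely the content of Remark \ref{rem-basis-dual-basis}. Everything else follows by unwinding the definitions of $F$, $\tilde{\omega}$, $\psi_{\A,c}$, and $\omega_{\A,c}$.
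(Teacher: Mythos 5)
Your proposal is correct and follows the same route as the paper's own (much terser) proof: part (1) rests on the orthonormality of the dual basis $\{\eta_j^*\}$ so that $F|_{X_1}$ composed with projection to $\c\p^{r-1}$ is the Kodaira embedding with $\Omega$ matching $\Omega_H$, and part (2) rests on Proposition \ref{prop-const-term-monomial} identifying $F|_{X_0}$ with $\psi_{\A,c}$. Your write-up simply makes explicit the bookkeeping (the vanishing of the $\frac{i}{2}dz\wedge d\bar z$ summand on fibers and the identification $E\cong\c^r$) that the paper leaves as "immediate from the definition."
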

\begin{proof}
(1) The claim is immediate from the definition of $\tilde{\omega}$ and the fact that the dual basis $\{\eta_1^*, \ldots, \eta_r^*\}$ is an orthonormal basis. 
(2) By Proposition \ref{prop-const-term-monomial} the constant term with respect to $t$ of 
each power series $\tilde{f}_j$ is a monomial $c_j\tilde{u}^{\beta_j}$. Hence when $t=0$ the K\"ahler form coincides with $\omega_{\A, c}$.
\end{proof}

\section{Gradient-Hamiltonian flow} \label{sec-grad-Hamiltonian}
Let $\X$ be a K\"ahler manifold with a K\"ahler form $\tilde{\omega}$. 
Let $\pi: \X \to \c$ be a holomorphic map. As usual we let $X_t$ denote the fiber $\pi^{-1}(t)$ and $\omega_t$ denote the restriction 
of $\tilde{\omega}$ to (the smooth locus of) $X_t$. 
Following Ruan \cite[Section 3.1]{Ruan}, we define the
\emph{gradient-Hamiltonian vector field} corresponding to $\pi$ on
$\X$ as follows. Let $\nabla(\Re(\pi))$ denote the gradient
vector field on $\X$ associated to the real part $\Re(\pi)$ with respect to
the K\"ahler metric. Since $\tilde{\omega}$ is K\"ahler and $\pi$ is holomorphic,
the Cauchy-Riemann equations imply that $\nabla(\Re(\pi))$ is related
to the Hamiltonian vector field 
$\xi_{\Im(\pi)}$ of the imaginary part $\Im(\pi)$ with respect to the
K\"ahler (symplectic) form $\tilde{\omega}$ by:  
\begin{equation}\label{gradient Re pi and Hamiltonian Im pi}
\nabla(\Re(\pi)) = - \xi_{\Im(\pi)}.
\end{equation}
Let $W \subset \X$ denote the critical set of $\pi$, i.e. the set on which $d\pi = 0$. Note that by the Cauchy-Riemann relations a point is a critical point of $\pi$ if and only if it is a 
critical point of $\Re(\pi)$.  
The {\it gradient-Hamiltonian vector field} $\V_\pi$, which is defined only on the open set $\X \setminus W$,
is by definition: 
\begin{equation}\label{def-grad-Hamiltonian}
\V_\pi := - \frac{\nabla(\Re(\pi))}{\|\nabla(\Re(\pi))\|^2}.  
\end{equation}
Where defined, $\V_\pi$ is smooth.
For $t \in \r_{\geq 0}$ let $\phi_t$ denote the time-$t$ flow
corresponding to $\V_\pi$. 
Since $\V_\pi$ may not be complete, $\phi_t$ for a given $t$ is not
necessarily defined on all of $\X \setminus W$. 

The following are main properties of the gradient-Hamiltonian vector field. For the sake of completeness we provide proofs.
\begin{Prop} 
\label{prop-grad-Hamiltonian}
Let the notation be as introduced above.
\begin{itemize}
\item[(a)] Suppose $s,t \in \r$ with $s \geq t > 0$. Where defined, the flow $\phi_t$ takes $X_s \cap (\X \setminus W)$ to
$X_{s-t}$. 
\item[(b)] Where defined, the flow $\phi_t$ preserves the symplectic
structures, i.e., if $x \in X_s \cap (\X \setminus W)$ is a point
where $\phi_t(x)$ is defined, then
$\phi_t^*(\omega_{s-t})_{\phi_t(x)} = (\omega_s)_x$. 
\end{itemize}
\end{Prop}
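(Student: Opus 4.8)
The plan is to prove both parts simultaneously by tracking what the gradient-Hamiltonian flow does along its trajectories. For part (a), the key computation is that $\V_\pi$ differentiates $\Re(\pi)$ to the constant $-1$ and $\Im(\pi)$ to $0$. Indeed, by definition $d(\Re\pi)(\V_\pi) = \langle \nabla(\Re\pi), \V_\pi\rangle = -\langle \nabla(\Re\pi), \nabla(\Re\pi)\rangle / \|\nabla(\Re\pi)\|^2 = -1$. For the imaginary part, using \eqref{gradient Re pi and Hamiltonian Im pi} we have $d(\Im\pi)(\V_\pi) = -\frac{1}{\|\nabla(\Re\pi)\|^2}\, d(\Im\pi)(\nabla(\Re\pi)) = \frac{1}{\|\nabla(\Re\pi)\|^2}\, d(\Im\pi)(\xi_{\Im\pi}) = \frac{1}{\|\nabla(\Re\pi)\|^2}\,\tilde\omega(\xi_{\Im\pi}, \xi_{\Im\pi}) = 0$. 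Hence along a trajectory $\phi_t(x)$, the quantity $\Im(\pi)$ is constant and $\Re(\pi)$ decreases at unit rate, so $\pi(\phi_t(x)) = \pi(x) - t = s - t$ whenever $x \in X_s$. This is exactly the assertion of (a); one should also note that the flow stays in $\X\setminus W$ wherever it is defined, which is automatic since $\V_\pi$ is only a vector field there.

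For part (b), I would argue that $\phi_t^*\tilde\omega = \tilde\omega$ as forms on $\X\setminus W$ (away from $W$), and then deduce the statement about $\omega_s$ and $\omega_{s-t}$ by restricting to fibers, using part (a) to know that $\phi_t$ maps $X_s\cap(\X\setminus W)$ into $X_{s-t}$. To show $\phi_t$ is symplectic, the standard route is Cartan's magic formula: $\frac{d}{dt}\phi_t^*\tilde\omega = \phi_t^*(\mathcal{L}_{\V_\pi}\tilde\omega) = \phi_t^*(d\iota_{\V_\pi}\tilde\omega + \iota_{\V_\pi} d\tilde\omega)$, and since $\tilde\omega$ is closed this reduces to $\phi_t^*(d\iota_{\V_\pi}\tilde\omega)$. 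So it suffices to show $\iota_{\V_\pi}\tilde\omega$ is closed; in fact I expect it to be exact. Using \eqref{def-grad-Hamiltonian} and \eqref{gradient Re pi and Hamiltonian Im pi}, $\V_\pi = -\nabla(\Re\pi)/\|\nabla(\Re\pi)\|^2 = \xi_{\Im\pi}/\|\nabla(\Re\pi)\|^2$. One computes $\iota_{\xi_{\Im\pi}}\tilde\omega = -d(\Im\pi)$ (sign conventions aside), so $\iota_{\V_\pi}\tilde\omega = -d(\Im\pi)/\|\nabla(\Re\pi)\|^2$; this is not obviously exact because of the denominator. The cleaner approach is to instead show directly that $\phi_t$ restricted to each fiber is symplectic by a fiberwise Moser-type argument, or — following Ruan and the reference \cite{HK} — to observe that $\V_\pi$ is tangent to level sets of both $\Re(\pi)$ and $\Im(\pi)$ only in the base direction, and that on the fiber the relevant contraction vanishes.

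The honest cleanest argument, which I would actually carry out, is this: fix $x\in X_s\cap(\X\setminus W)$ and restrict attention to a neighborhood. Because $d(\Re\pi)(\V_\pi) = -1\neq 0$, the flow $\phi$ gives a local trivialization in which $\pi$ is (up to the imaginary part being constant) just the time coordinate, so a neighborhood of the trajectory looks like $X_{s-t}\times(\text{interval})$ with $\pi$ the projection. Then $\phi_t: X_s \to X_{s-t}$ and I must compare $\phi_t^*\omega_{s-t}$ with $\omega_s$. Write $\iota: X_s\hookrightarrow\X$. Then $\phi_t^*\omega_{s-t} = \phi_t^*(\iota_{s-t}^*\tilde\omega) = (\iota_{s-t}\circ\phi_t)^*\tilde\omega = (\phi_t\circ\iota_s)^*\tilde\omega = \iota_s^*(\phi_t^*\tilde\omega)$. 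So it reduces exactly to $\phi_t^*\tilde\omega = \tilde\omega$ on a neighborhood in $\X\setminus W$, i.e. to the Cartan-formula computation above. To finish that, I compute $\mathcal{L}_{\V_\pi}\tilde\omega = d\iota_{\V_\pi}\tilde\omega$. Now $\iota_{\V_\pi}\tilde\omega = \frac{1}{\|\nabla\Re\pi\|^2}\iota_{\xi_{\Im\pi}}\tilde\omega = \frac{1}{\|\nabla\Re\pi\|^2}(-d\Im\pi)$; but on the open set under consideration, after the flow trivialization, $\Im\pi$ and $\|\nabla\Re\pi\|^2$ are both functions that are constant along $\V_\pi$ and — more to the point — pulling everything back to the product and restricting to the fiber kills the $d\Im\pi$ factor since $\Im\pi$ is locally constant on a neighborhood of the fiber union trajectory (as $\Re\pi$ provides the only varying base direction and $\Im\pi$ is constant on trajectories by part (a)'s computation). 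Thus $\iota_s^*(d\iota_{\V_\pi}\tilde\omega) = 0$, giving $\frac{d}{dt}\iota_s^*(\phi_t^*\tilde\omega) = 0$ and hence $\phi_t^*\omega_{s-t} = \omega_s$.

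The main obstacle is the bookkeeping in part (b): the contraction $\iota_{\V_\pi}\tilde\omega$ is genuinely not closed on all of $\X\setminus W$ (the $1/\|\nabla\Re\pi\|^2$ factor spoils exactness), so one cannot conclude $\phi_t^*\tilde\omega = \tilde\omega$ globally — it is only the restriction to fibers that is preserved. The care needed is to see that when one restricts $d\iota_{\V_\pi}\tilde\omega$ to a fiber $X_s$, the term $d\Im\pi$ (which is the obstruction) pulls back to zero on $X_s$ because $\pi$ is constant on $X_s$, while the derivative of the scalar $1/\|\nabla\Re\pi\|^2$ only contributes a term of the form (scalar)$\wedge\, d\Im\pi$, which also restricts to zero. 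I'd state this as: $d\iota_{\V_\pi}\tilde\omega = d\bigl(\tfrac{1}{\|\nabla\Re\pi\|^2}\bigr)\wedge(-d\Im\pi) + \tfrac{1}{\|\nabla\Re\pi\|^2}d(-d\Im\pi)$, the second term vanishes, and the first pulls back to $0$ under $\iota_s$ since $\iota_s^*(d\Im\pi) = 0$. Combined with the identity $\phi_t^*\omega_{s-t} = \iota_s^*\phi_t^*\tilde\omega$ established above and Cartan's formula, this gives part (b).
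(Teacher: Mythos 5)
Your proof is correct and takes essentially the same route as the paper: both reduce (b), via Cartan's magic formula and $d\tilde\omega=0$, to the fact that $\iota_{\V_\pi}\tilde\omega$ is a scalar multiple of $d\,\Im(\pi)$, which annihilates fiber-tangent vectors; you compute $d\iota_{\V_\pi}\tilde\omega$ by the Leibniz rule where the paper uses the invariant formula for the exterior derivative of a one-form, and you additionally spell out steps the paper leaves implicit (the constancy of $\Im(\pi)$ along trajectories in (a), and the pullback identity $\phi_t^*\omega_{s-t}=\iota_s^*\phi_t^*\tilde\omega$). One small caveat: your intermediate remark that $\Im(\pi)$ is ``locally constant on a neighborhood of the fiber union trajectory'' is not true as stated, but your final argument never uses it --- it only needs that $d\,\Im(\pi)$ restricts to zero on each fiber, which is correct.
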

\begin{proof}
The claim (a) immediately follows from the following simple computation:
$$\V(\Re(\pi)) = - \frac{1}{\|\nabla(\Re(\pi))\|^2} \langle \nabla(\Re(\pi)), \nabla(\Re(\pi)) \rangle = -1.$$
(In fact, this is the reason for the normalization factor $1 / \| \nabla(\Re(\pi)) \|^2$ in the definition of the gradient-Hamiltonian vector field.)
To prove (b) it suffices to show that at any point $x \in \X$ we have $\mathcal{L}_\V(\omega)(X, Y) = 0$ where $X$, $Y$ are smooth vector fields which are tangent to 
fibers of $\pi$ and $\mathcal{L}_\V$ is the Lie derivative along $\V$. We use Cartan's magic formula for the Lie derivative and the fact that $d\omega = 0$ to write:
$$\mathcal{L}_\V(\omega)(X, Y) = d \circ \iota_\V(\omega)(X, Y).$$
Next we recall the following coordinate-free formula for the exterior derivative of a $1$-form $\alpha$: 
$$d\alpha(X, Y) = X(\alpha(Y)) - Y(\alpha(X)) - \alpha([X, Y]),$$
where $[X, Y]$ denotes the Lie bracket of vector fields $X$ and $Y$.
Applying this to the $1$-form $\iota_\V(\omega)$ we obtain:
\begin{equation} \label{equ-ext-deriv}
d \circ \iota_\V(\omega)(X, Y) = X (\omega(\V, Y)) - Y(\omega(\V, X)) - \omega(\V, [X, Y]).
\end{equation}
Finally, from the definition of a Hamiltonian vector field, 
one knows that the tangent space to a fiber of $\pi$ at any point
lies in the symplectic orthogonal to the Hamiltonian vector field $\xi_{\Im(\pi)}$ at that point. Since the vector fields $X$, $Y$ and $[X, Y]$ are all tangent to the fibers of
$\pi$ and the vector field $\V$ is parallel to $\xi_{\Im(\pi)}$, 
we conclude that the righthand side of \eqref{equ-ext-deriv} is equal to $0$. This finishes the proof. 
\end{proof}

\section{A toric open subset on the variety} \label{sec-main}
In this section we let $X$ be a smooth $n$-dimensional complex projective variety equipped with a very ample line bundle $L$. We have the Kodaira embedding 
$X \hookrightarrow \p(E^*)$ where $E=H^0(X, L)$. As in Section \ref{sec-Kahler} fix a Hermitian product $H$ on $E^*$. The choice of $H$ gives a Fubini-Study K\"ahler form 
$\Omega_H$ on $\p(E^*)$. We let $\omega$ denote the pull-back of $\Omega_H$ to $X$ under the embedding $X \hookrightarrow \p(E^*)$. The K\"aher form $\omega$ represents the
class of $c_1(L)$ (we recall that by Moser's trick any other K\"ahler form in $c_1(L)$ is symplectomorphic to $\omega$).

In Sections \ref{sec-family} we constructed a complex manifold $\X$ together with a holomorphic function $\pi: \X \to \c$ such that:
\begin{itemize}
\item The family is trivial over $\c^*$ i.e. $\pi^{-1}(\c^*) \cong X \times \c^*$. In particular for each $t \neq 0$ we have $X_t := \pi^{-1}(t)$ is isomorphic to $X$.
\item The fiber $X_0 = \pi^{-1}(0)$ is the algebraic torus $(\c^*)^n$.
\item The map $\pi: \X \to \c$ has no critical points, i.e. $d\pi$ is nonzero at every point in $\X$.
\end{itemize}

Moreover, in Sections \ref{sec-power-series}, \ref{sec-valuation} and \ref{sec-Kahler} we constructed a K\"ahler form $\tilde{\omega}$ on $\X$ such that:
\begin{itemize}
\item The restriction $\omega_1$ of the K\"ahler form $\tilde{\omega}$ to the fiber $X_1$ is the original K\"ahler form $\omega$ on $X \cong X_1$.
\item The restriction $\omega_0$ of the K\"ahler form $\tilde{\omega}$ to the special fiber $X_0 = (\c^*)^n$ is an integral toric K\"ahler form $\omega_{\A, c}$ on $(\c^*)^n$ corresponding to 
a subset $\A = \{\beta_1, \ldots, \beta_r\} \subset \z^n$ and $c = (c_1, \ldots, c_r) \in (\c^*)^r$. 
\end{itemize}

The next theorem is one of the main results of the paper.
\begin{Th} \label{th-main}
With notation as above, there exists an open susbset $U \subset X$ (in the usual classical topology) such that $(U, \omega)$ is symplectomorphic to $((\c^*)^n, \omega_{\A, c})$.
\end{Th}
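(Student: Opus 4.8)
The plan is to use the gradient-Hamiltonian flow of $\pi: \X \to \c$ with respect to the K\"ahler form $\tilde\omega = F^*(\tilde\Omega)$ constructed in Section \ref{sec-Kahler}, and flow from the fiber $X_1$ to the fiber $X_0 = (\c^*)^n$. Concretely, first I would recall from Proposition \ref{prop-family} that $\pi$ has no critical points, so the critical set $W$ of Section \ref{sec-grad-Hamiltonian} is empty and the gradient-Hamiltonian vector field $\V_\pi$ is a smooth vector field defined on all of $\X$. By Proposition \ref{prop-grad-Hamiltonian}(a), wherever the time-$1$ flow $\phi_1$ is defined it carries $X_1$ into $X_0$, and by part (b) it intertwines $\omega_1$ with $\omega_0$. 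Since $\omega_1 = \omega$ and $\omega_0 = \omega_{\A,c}$ by Proposition \ref{prop-omega-1-omega-0}, the restriction of $\phi_1$ to the (open) set $U$ of points of $X_1$ for which $\phi_t(x)$ is defined for all $t \in [0,1]$ gives a symplectic embedding $(U,\omega) \hookrightarrow ((\c^*)^n, \omega_{\A,c})$.

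The substance of the argument is therefore twofold: showing $U$ is a genuine open subset of $X$, and identifying the image of $U$ under $\phi_1$ with all of $(\c^*)^n$. For openness, I would invoke the standard fact that the domain of definition of the flow of a smooth vector field is open in $\mathbb{R} \times \X$; restricting to $\{1\} \times X_1$ and using that $X_1 \cong X$ shows $U$ is open in $X$. For surjectivity onto $X_0$, the key point is that $X_0 = (\c^*)^n$ is relatively compact in no fiber but we can run the flow backwards: given any $y \in X_0$, I want to show the backward trajectory $\phi_{-t}(y)$, $t \in [0,1]$, stays in $\X$ and lands in $X_1$. Here is where I would use completeness of the flow in the relevant direction — the backward gradient-Hamiltonian flow increases $\Re(\pi)$ at unit speed, so starting from $t=0$ it reaches $\Re(\pi) = 1$ in time $1$; the only way it could fail to be defined is by escaping to infinity in finite time, and this is precisely where the hypothesis that $X$ is \emph{projective} (hence $X_t$ compact for $t \neq 0$, and the flow line cannot leave a compact fiber) is needed. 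More carefully: along a flow line $\pi$ varies only in its real part, so the trajectory from $y \in X_0$ stays over the segment $\pi \in [0,1]$; if it does not reach $X_1$ it must leave every compact subset of $\X$, but projecting to $\c\p^{r-1}$ via the embedding $F$ and using compactness of $\c\p^{r-1} \times [0,1]$ together with the fact that $F(\X)$ is locally closed, one rules this out. Thus $\phi_1(U) = X_0$ and $\phi_1|_U$ is a symplectomorphism onto $((\c^*)^n, \omega_{\A,c})$.

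The main obstacle I anticipate is the completeness/escape-to-infinity issue: $\X$ itself is not compact (it contains $(\c^*)^n \times \{0\}$ and punctured neighborhoods), so one cannot simply assert the flow is complete. The argument must exploit that each nonzero fiber $X_t$ \emph{is} compact and that the flow preserves fibers up to translating $\Re(\pi)$, so a flow line starting on $X_0$ and attempting to run backward to $X_1$ is trapped, at each intermediate time, inside a compact fiber $X_s$ with $0 < s < 1$ — no, more precisely inside $\pi^{-1}([0,1])$ which is not compact, so one needs the slightly finer observation that the trajectory is confined to $\bigcup_{0 \le s \le 1} X_s$ and, as $s$ ranges over $[\epsilon, 1]$ for the portion after leaving $X_0$, lies in the compact set $\pi^{-1}([\epsilon,1]) \cap \overline{F(\X)}$, letting $\epsilon \to 0$ with a limiting argument. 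This is the one place requiring genuine care, and it is exactly the point flagged in the introduction where smoothness of $X$ guarantees the gradient-Hamiltonian flow exists for all $t \ge 0$. Everything else — openness of $U$, the symplectomorphism property, the identification of $\omega_0$ with $\omega_{\A,c}$ — follows formally from the results already established.
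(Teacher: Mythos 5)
Your proposal is correct and follows essentially the same route as the paper: the paper simply runs the flow the other way, defining $U = \phi_{-1}(X_0)$ outright, with your completeness-of-the-backward-flow step being exactly Lemma \ref{lem-grad-Ham-flow} (proved, as you say, from the absence of critical points and compactness of the nonzero fibers). Your detour through the projection to $\c\p^{r-1}$ and local closedness of $F(\X)$ is unnecessary: once the trajectory leaves $X_0$ it lies in $\pi^{-1}([\delta,1]) \cong X \times [\delta,1]$, which is already compact, so no limiting argument in $\epsilon$ is needed.
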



We need the following lemma.
\begin{Lem} \label{lem-grad-Ham-flow}
The gradient-Hamiltonian flow $\phi_{-t}$ is defined for all $(\tilde{u}, 0) \in X_0 = (\c^*)^n \times \{0\}$ and all $t \geq 0$. 
\end{Lem}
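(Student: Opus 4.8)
Here is my plan for proving Lemma \ref{lem-grad-Ham-flow}.

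\textbf{Overall strategy.} The gradient-Hamiltonian vector field $\V_\pi$ is defined away from the critical set $W$ of $\pi$, and by Proposition \ref{prop-family}(3) we have $W = \emptyset$, so $\V_\pi$ is a smooth vector field on all of $\X$. By Proposition \ref{prop-grad-Hamiltonian}(a), the flow $\phi_{-t}$ (meaning the flow for time $t$ in the direction $\V_\pi$) moves a point of $X_s$ to $X_{s+t}$; equivalently, following $-\V_\pi$ decreases $t$. Wait — I need to be careful with signs: $\V_\pi(\Re \pi) = -1$, so the flow $\phi_t$ of $\V_\pi$ takes $X_s$ to $X_{s-t}$. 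So "$\phi_{-t}$ defined for $t \ge 0$" means flowing along $-\V_\pi = \nabla(\Re \pi)/\|\nabla(\Re\pi)\|^2$, which increases $\Re \pi$, carrying $X_0$ into $X_t$ for $t > 0$. The task is thus to show that, starting from any $(\tilde u, 0) \in (\c^*)^n \times \{0\} = X_0$, the integral curve of $-\V_\pi$ exists for all forward time $t \in [0, \infty)$. Wait, but actually one only needs it on a bounded interval to reach $X_1$; presumably the statement is really about reaching the fiber $X_1$ so that $U := \phi_{-1}(X_0) \subset X_1 \cong X$ is the desired open set. In any case the mechanism is the same: show the flow does not escape to infinity in finite time.

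\textbf{Key steps.} First I would note that, since $\pi: \X \to \c$ and the flow stays over the segment $\{ t \in \r : 0 \le t \le T \}$ in the base for times in $[0,T]$, and each fiber $X_t$ ($t \ne 0$) is compact (it is isomorphic to the projective variety $X$), the only way the flow can fail to be defined on $[0,T]$ is if the trajectory, while remaining in the non-compact special fiber $X_0 = (\c^*)^n$... no, it has already left $X_0$ for $t>0$. The real danger is escape to infinity within the non-compact total space $\X$ as $t$ ranges over a bounded interval — i.e. the curve leaving every compact subset of $\X$. To control this I would use the embedding $F: \X \hookrightarrow \c\p^{r-1} \times \c$ of Section \ref{sec-Kahler}, under which $\tilde\omega$ is the restriction of the product Fubini-Study form $\tilde\Omega$. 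The closure $\overline{F(\X)}$ in $\c\p^{r-1}\times\c$ is compact; the gradient-Hamiltonian vector field of the projection $\c\p^{r-1} \times \c \to \c$ restricted to $F(\X)$ agrees with $F_*\V_\pi$. So it suffices to show the trajectory cannot limit onto $\overline{F(\X)} \setminus F(\X)$ — the "added boundary" consisting of points over $t=0$ that are NOT in $(\c^*)^n \times \{0\}$ (i.e. points of the toric variety $X_{\A,c}$ lying on coordinate hyperplanes $z_j = 0$), together possibly with extra components. The clean way: the flow moves strictly in the increasing-$\Re\pi$ direction, so for $t \in (0,T]$ the trajectory lies in fibers $X_t$ with $t \ne 0$, each compact; the concern is only whether the trajectory starting at $t=0$ genuinely enters these compact fibers rather than sliding along $X_0$ — but $\V_\pi(\Re\pi) = -1 \ne 0$ forces immediate transversal motion off $X_0$. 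Then on $[\epsilon, T]$ the trajectory lives in the compact set $\bigcup_{\epsilon \le s \le T} X_s$ (a closed subset of the compact $\overline{F(\X)}$, and contained in $\X$ since fibers over $t \ne 0$ are entirely in $\X$), hence extends to all of $[\epsilon, T]$; letting $\epsilon \to 0$ handles the endpoint at $0$ by a separate short-time existence argument near $X_0$ using smoothness of $\V_\pi$ there.

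\textbf{Main obstacle.} The delicate point is the behavior right at $t = 0$: $X_0 = (\c^*)^n$ is not compact, and a priori a trajectory could, in the limit $t \to 0^+$, come from "infinity" in $X_0$ (that is, $F(\X)$ near $X_0$ could be unbounded in a way that lets trajectories for small positive $t$ not extend back to $t=0$ inside $\X$). However — and this is the key structural input — near the special fiber the family is modeled by the open set $\tilde U \subset \c^{n+1}$ of \eqref{equ-tilde-U} with coordinates $(\tilde u, t)$, and $\V_\pi$ is smooth there; since $(\c^*)^n \times \{0\}$ is a closed subset of $\tilde U$ on which $\V_\pi$ is defined and smooth, ODE theory gives, for each compact $K \subset (\c^*)^n$, an $\epsilon_K > 0$ so that $\phi_{-t}$ is defined on $K \times \{0\}$ for all $t \in [0, \epsilon_K]$ and stays in $\tilde U$. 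Concatenating this short-time flow near $t=0$ with the compactness argument on $[\epsilon_K, T]$ yields existence on all of $[0,T]$ for points of $K$; since every point of $(\c^*)^n$ lies in such a $K$, the lemma follows. I would write the argument in exactly this two-stage form: (i) smoothness of $\V_\pi$ on $\tilde U$ gives short-time flow off $X_0$ into a nearby non-critical fiber; (ii) compactness of the fibers $X_t$, $t \ne 0$ (via the embedding $F$ and closedness in $\overline{F(\X)}$) upgrades short-time to all-time existence.
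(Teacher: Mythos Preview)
Your proposal is correct and follows essentially the same approach as the paper, invoking that $\pi$ has no critical points, that the flow $\phi_{-t}$ increases $\Re(\pi)$ at unit rate, and that the fibers $X_t$ for $t \neq 0$ are compact (being biholomorphic to $X$). The paper compresses all of this into a single sentence; your two-stage argument unpacks the ODE-escape reasoning carefully, though the detour through the closure $\overline{F(\X)}$ in $\c\p^{r-1}\times\c$ is unnecessary since $\pi^{-1}([\epsilon,T]) \cong X \times [\epsilon,T]$ is already manifestly compact.
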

\begin{proof}
The lemma follows from Proposition \ref{prop-grad-Hamiltonian}(a), the fact that $\pi$ has no critical points and that the boundary of the manifold $\X$ lies in the special fiber 
$X_0$ as each general fiber $X_t$, $t \neq 0$, is biholomorphic to the compact manifold $X$.
\end{proof}

\begin{proof}[Proof of Theorem \ref{th-main}]
Consider the gradient-Hamiltonian flow $\phi_{-t}$ of the function $\pi: \X \to \c$. By Lemma \ref{lem-grad-Ham-flow} 
flow $\phi_{-t}$ is defined on all of $X_0 = (\c^*)^n$ and for all $t \geq 0$. In particular $\phi_{-1}: X_0 \to X_1$ the
is a diffeomorphism between $X_0 = (\c^*)^n$ and its image $U = \phi_{-1}(X_0) \subset X_1 \cong X$. The set $U$ is open because the flow of a vector field is an open map. 
Moreover, since the gradient-Hamiltonian flow preserves the symplectic forms, we get
a symplectomorphism between $(X_0, \omega_0)$ and $(U, \omega_1)$. The theorem now follows from Proposition \ref{prop-omega-1-omega-0}.  
\end{proof}

\begin{Rem} \label{rem-U-not-dense}
Note that in general the open subset $U$ is not dense in $X$. The open subset $U$ is dense in $X$ whenever $(X, \omega)$ and $(X_0, \omega_0)$ have the 
same volume. This is the situation appearing in \cite{HK}.
\end{Rem}


\section{Two examples} \label{sec-example}
\begin{Ex} \label{ex-1}
Consider the elliptic curve $$X = \{(x : y : z) \mid y^2z = x^3 + z^3\},$$ in $\c\p^2$. The point at infinity on the elliptic curve is $(0:1:0)$.
In the affine chart $\{z \neq 0\}$ the curve is given by the equation $y^2 = x^3 + 1$.

Consider the standard Hermitian product $H$ on $\c^3$, i.e. the standard basis $\{e_1, e_2, e_3\}$ is an orthonormal basis. 
Equip $X$ with the K\"ahler structure induced from the Fubini-Study K\"ahler form on the projective space $\c\p^2$ corresponding to $H$.
Let $L$ be the restriction of the hyperplane bundle $\mathcal{O}(1)$ to $X$. Then the space of global holomorphic sections $H^0(X, L)$ naturally can be 
identified with the dual vector space $(\c^3)^*$. The coordinate functions $\{x, y, z\}$ are the dual basis to the standard basis $\{e_1, e_2, e_3\}$.
We denote the sections in $H^0(X, L)$ corresponding to these coordinate functions also by $x, y, z$.

Take the point $p = (0 : 1 : 1)$ on $X$. The function $u = x/z$ is a local coordinate in some neighborhood $U$ of the point $p$ on $X$. 
We can take $U$ to be $\{(x : y : 1) \in X \mid |x| < 1\}$. 

Take $\gamma = 1 \in \n$. The map $\phi : U \times \c^* \to \c^2$ is given by:
$$\phi((x: y : 1), t) = (t^{-1}x, t).$$ 

Take $\tau = z$. We have the following power series expansions:
\begin{align} 
f_1 &= x/z = u \\
\label{equ-ex1-f_i} 
f_2 &= y / z = 1 + \frac{u^3}{2} - \frac{u^6}{8} + \frac{u^9}{16} + \cdots \\
f_3 &= z / z = 1. 
\end{align}

The values of the valuation are: 
$$v(x/z)=1$$ $$v(y/z)=0$$ $$v(z/z)=0,$$ 
which clearly generate $\z$. We thus get:
\begin{align*}
\tilde{f}_1 &= \tilde{u} \\
\tilde{f}_2 &= 1 + \frac{t^3 \tilde{u}^3}{2} - \frac{t^6 \tilde{u}^6}{8} + \frac{t^9 \tilde{u}^9}{16} + \cdots \\
\tilde{f}_3 &= 1. 
\end{align*}

The embedding $F: \X \to \c\p^2 \times \c$ is given by:
\begin{align}
&F((x:y:z), t) = ((t^{-1}x:y:z), t), \quad t \neq 0,\\
&F(\tilde{u}, t) = ((\tilde{u} : 1 + \frac{t^3 \tilde{u}^3}{2} - \cdots : 1), t) \quad t \textup{ in a neighborhood of } 0. \label{equ-ex1-F}
\end{align}

At a point $(\tilde{u}, 0) \in \tilde{U} \subset \c^2$ the Jacobian of $F$ is:
$$JF(\tilde{u}, 0) = \left[ \begin{matrix} 1 & 0 & 0 \\ 0 & 0 & 1 \\ \end{matrix} \right],$$
which has rank $2$ as expected.

Let $\tilde{x} = t^{-1}x$, $\tilde{y}=y$ and $\tilde{z}=z$. The image $F(X \times \c^*) \subset \c\p^2 \times \c^*$ is given by:
$$F(X \times \c^*) = \{((\tilde{x}:\tilde{y}:\tilde{z}), t) \mid \tilde{z}\tilde{y}^2 = t^3 \tilde{x}^3 + \tilde{z}^3,~ t \neq 0 \}.$$ 
Let $\overline{\X}$ denote the closure of $F(X \times \c^*)$ in $\c\p^2 \times \c$. Consider $\overline{\X}$ as a family over $\c$. The fiber over $0$ of $\overline{\X}$ is given by the equation
$$\tilde{z}\tilde{y}^2 = \tilde{z}^3,$$
which is a union of the three lines $\tilde{y}=1$, $\tilde{y}=-1$ and $\tilde{z}=0$ in $\c\p^2$.
We note that by \eqref{equ-ex1-F}, for any $(\tilde{u}, t) \in \tilde{U}$, $\lim_{t \to 0} F(\tilde{u}, t) = ((\tilde{u} : 1 : 1), 0)$ which lies on the line $\tilde{y}=1$.
In other words, the family $\X$ can be realized as $\{((\tilde{x}:\tilde{y}:\tilde{z}), t) \mid \tilde{z}\tilde{y}^2 = t^3 \tilde{x}^3 + \tilde{z}^3,~ t \neq 0 \}$ union with the zero fiber 
$\{((\tilde{u}:1:1), 0) \mid \tilde{u} \in \c^* \} \cong \c^*$.

Notice that the degree of the zero fiber in $\X$ as a subvariety of $\c\p^2$ is $1$ because it is the projective line $\tilde{y}=1$ minus two points. On the other hand, 
the degree of the zero fiber of $\overline{\X}$ is $3$ because it is a union of the three lines $\tilde{y}=1$, $\tilde{y}=-1$ and $\tilde{z}=0$. Note that $3$ is the 
degree of the elliptic curve $X \subset \c\p^2$.
This agrees with the fact that degree of a fiber in a flat family does not change. 
\end{Ex} 

\begin{Ex} \label{ex-2}
In Example \ref{ex-1} instead of $\{x, y, z\}$ let us consider the basis $\{x, y-z, z\}$ for $(\c^3)^*$ and take the Hermitian product on $(\c^*)^3$ in which 
$\{x, y-z, z\}$ is orthonormal. To simplify the notation denote $y-z$ by $w$. With $p, u, \gamma$ and $\tau = z$ as before, from \eqref{equ-ex1-f_i} we get: 
$$v(w / z) = 3.$$
Let $G: \X \to \c\p^2 \times \c$ be the embedding obtained from the basis $\{x, w, z\}$. It is easy to see that the equation $zy^2 = x^3 + z^3$ can be rewritten 
as: $$zw^2 + 2z^2w = x^3.$$ 
If we put $\tilde{x} = t^{-1}x$, $\tilde{w} = t^{-3}w$ and $\tilde{z} = z$, the image $G(X \times \c^*) \subset \c\p^2 \times \c^*$ is given by the equation:
$$t^3\tilde{z} \tilde{w}^2 + 2\tilde{z}^2 \tilde{w} = \tilde{x}^3.$$
Let $\overline{\X}$ denote the closure of $G(X \times \c^*)$ in $\c\p^2 \times \c$.
Letting $t=0$ we get the curve:
$$2\tilde{z}^2 \tilde{w} = \tilde{x}^3.$$ 
This is an irreducible degree $3$ curve which is in fact a toric variety. It is the closure of the image of the monomial map:
$$ \tilde{u} \mapsto (\tilde{u} : \tilde{u}^3/2 : 1).$$
(Notice that this is different from the situation in 
Example \ref{ex-1} where the zero fiber in $\overline{\X}$ was a union of three degree $1$ curves.) 
This is an example where the zero fiber $X_0 \cong \c^*$ in the family $\X$ has  the same symplectic volume as the general fiber.
\end{Ex}

In Example \ref{ex-2},
the family $\overline{\X}$, i.e. the closure of the family $\X$ in $\c\p^2 \times \c$, is an example of the toric degenerations considered in \cite{Anderson, HK}.



\section{Enlarging the toric open subset} \label{sec-enlarge}
As before let $X$ be a smooth complex projective variety with a very ample line bundle $L$. We continue to follow the notation in Section \ref{sec-main}. In particular,
$E$ denotes the space $H^0(X, L)$ of holomorphic sections of $L$. The variety $X$ embeds in the projective space $\p(E^*)$ via the Kodaira map of $L$.
We fix a Hermitian product $H$ on $E^*$. 

For each integer $k>0$ we let 
$E^k$ denote the image of $E \otimes \cdots \otimes E$ ($k$ times) in $H^0(X, L^{\otimes k})$. The dual space $(E^k)^*$ can naturally be considered as a 
subspace of the $k$-th symmetric power $\Sym^k(E^*)$. We have the Hermitian product $H^{\otimes k}$ on $\Sym^k(E^*)$, and hence on $(E^k)^*$, defined by:
$$H^{\otimes k}(v_1 \cdot \ldots \cdot v_k, w_1 \cdot \ldots \cdot w_k) = H(v_1, w_1) \cdots H(v_k, w_k),$$
for $v_i, w_i \in E^*$. 

Let $v: \c(X) \setminus \{0\} \to \z^n$ be a valuation where $\z^n$ is equipped with a total order respecting addition (e.g. the valuation $v$ associated to a local coordinate system 
defined in Section \ref{sec-valuation}). 
We recall the definition of a Newtoon-Okounkov body associated to $(X, L, v)$ (see \cite{Okounkov, LM, KKh-Annals}). Fix a nonzero meromorphic section $\tau$.
\begin{Def} \label{def-NO-body}
The {\it Newton-Okounkov body} 
$\Delta = \Delta(X, L, v)$ is the convex body in $\r^n$ defined as:
$$\Delta = \overline{\conv(\bigcup_{k > 0} \{ \frac{1}{k} v(\sigma / \tau^k) \mid \sigma \in E^k \setminus \{0\} \})}.$$
\end{Def}
The dependence of $\Delta$ on $\tau$ is minor, a different choice of $\tau$ gives rise to a shifted convex body. On the other hand, the dependence of $\Delta$ on 
the valuation $v$ is very subtle.

The convex body $\Delta$ far generalizes the notion of Newton polytope of a projective toric variety (see \cite[Section 2]{CLS}
for the notion of Newton polytope of a projective toric variety). 

Note that $\Delta$ is not necessarily a polytope, although in many interesting examples it is indeed a rational polytope. 
The following is the main property of the convex body $\Delta$ (see \cite[Theorem 2.3]{LM} and \cite[Theorem 4.9]{KKh-Annals}). It is a generalization of the 
Bernstein-Kushnirenko theorem about degree of a projective toric variety and volume of its Newton polytope (Theorem \ref{th-BK}).
\begin{Th} \label{th-NO-body}
The convex body $\Delta$ has dimension $n$ and the degree of $(X, L)$, i.e. the self-intersection number of $c_1(L)$, is equal to $n!$ times the $n$-dimensional Euclidean volume of $\Delta$.
\end{Th}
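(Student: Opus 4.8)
The plan is to deduce Theorem~\ref{th-NO-body} from the degree theorem for toric varieties (Theorem~\ref{th-BK}) by degenerating $X$ to a torus, keeping careful track of volumes along the way. The central point is that for each sufficiently large $k$ the linear system $E^k$ on $X$ gives a Kodaira-type embedding $X \hookrightarrow \p((E^k)^*)$, and the construction of Sections~\ref{sec-family}--\ref{sec-Kahler} applied to this embedding (with the valuation $v$ and an appropriately chosen coordinate system, cf.\ Remark~\ref{rem-diff-A-generates}) produces a flat family $\X_k$ whose generic fiber is $X$ embedded by $L^{\otimes k}$ and whose special fiber is the torus $(\c^*)^n$ sitting inside $\p((E^k)^*)$ via the monomial map $\psi_{\A_k, c_k}$, where
$$\A_k = \{ v(\sigma/\tau^k) \mid \sigma \in E^k \setminus \{0\}\} \subset \z^n.$$
Taking the closure $\overline{\X}_k$ in $\p((E^k)^*) \times \c$, flatness (which holds because the degree of a fiber in a family embedded in projective space is constant, cf.\ the discussion after property (4) in the introduction, and Examples~\ref{ex-1}, \ref{ex-2}) forces the degree of the special fiber of $\overline{\X}_k$ to equal the degree of $X$ under $L^{\otimes k}$, which is $k^n \deg(X,L)$.

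First I would establish the toric side of the identity. The special fiber of $\overline{\X}_k$ is the projective closure of the image of $\psi_{\A_k, c_k}$, i.e.\ the (possibly non-normal) toric variety $X_{\A_k, c_k}$. By the Bernstein--Kushnirenko theorem (Theorem~\ref{th-BK}) its degree is $n!\,\vol(\conv(\A_k))$. However, the special fiber of $\overline{\X}_k$ may be reducible (as in Example~\ref{ex-1}), so a priori its degree is $n!\,\vol(\conv(\A_k))$ plus contributions from other components; I would need the estimate $\deg(\overline{\X}_k)_0 \geq n!\,\vol(\conv(\A_k))$, together with $\deg(\overline{\X}_k)_0 = k^n\deg(X,L)$, to conclude
$$\deg(X,L) \geq \frac{n!}{k^n}\,\vol(\conv(\A_k)) = n!\,\vol\!\left(\tfrac{1}{k}\conv(\A_k)\right).$$
Since $\tfrac{1}{k}\conv(\A_k) = \conv\{\tfrac1k v(\sigma/\tau^k)\}$ exhausts $\Delta$ as $k\to\infty$ (by definition of the Newton--Okounkov body and the fact that $v(E^{k}\cdot E^{k'}) \supseteq v(E^k) + v(E^{k'})$, which makes the sequence of bodies superadditive and hence convergent to their closed convex hull), passing to the limit yields $\deg(X,L) \geq n!\,\vol(\Delta)$.

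For the reverse inequality, the standard approach is to count sections: $\dim E^k = |\A_k| = \#(k\Delta \cap \z^n) + o(k^n)$ grows like $\vol(\Delta)k^n + o(k^n)$ by a lattice-point count, while on the other hand $\dim H^0(X, L^{\otimes k}) = \tfrac{\deg(X,L)}{n!}k^n + o(k^n)$ by Riemann--Roch / the Hilbert polynomial, and for $L$ very ample $E^k = H^0(X, L^{\otimes k})$ for $k \gg 0$ (projective normality is not needed; one only needs the dimensions to agree asymptotically, which follows from $E^k$ defining the same polarization). Comparing leading terms gives $\deg(X,L) \leq n!\,\vol(\Delta)$, and the two inequalities together prove the equality; the dimension statement $\dim \Delta = n$ follows since $\vol(\Delta) = \deg(X,L)/n! > 0$. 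I expect the main obstacle to be the bookkeeping around reducibility and non-reducedness of the special fiber of $\overline{\X}_k$ — ensuring that the toric component carries the full volume $\vol(\conv(\A_k))$ and that the degree comparison is not spoiled by embedded or lower-dimensional components — though since this theorem is quoted from \cite{LM, KKh-Annals} one may alternatively simply invoke those references and use the present toric-degeneration picture only to re-derive the geometric meaning of the body.
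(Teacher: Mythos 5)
First, note that the paper does not prove Theorem \ref{th-NO-body} at all: it is quoted from \cite[Theorem 2.3]{LM} and \cite[Theorem 4.9]{KKh-Annals} (your fallback of simply invoking those references is exactly what the paper does). Your degeneration argument is therefore a different route, and its lower-bound half is very much in the spirit of the paper (compare Example \ref{ex-1}, where the special fiber of $\overline{\X}$ is reducible but its total degree matches that of $X$). That half can be made to work, but not with the justification you give: you assert flatness of $\overline{\X}_k$ ``because the degree of a fiber in a family embedded in projective space is constant.'' That is backwards --- constancy of the Hilbert polynomial, hence of the degree, of the fibers is a \emph{consequence} of flatness (\cite[III, Theorem 9.9]{Hartshorne}), not a criterion for it. The correct argument is that $\overline{\X}_k$, being the closure of the irreducible set $F(X\times\c^*)$, is an integral variety dominating the smooth curve $\c$, hence flat over $\c$; then the special fiber has degree $k^n\deg(X,L)$, it contains $X_{\A_k,c_k}$ as an $n$-dimensional irreducible component (its points are limits as $t\to 0$ of points of $F(X\times\c^*)$), and Theorem \ref{th-BK} gives $k^n\deg(X,L)\ge n!\,\vol(\conv(\A_k))$; taking the supremum over $k$ (using $\Delta_k\subseteq\Delta_{mk}$, so the volumes increase to $\vol(\Delta)$ along a cofinal sequence) yields $\deg(X,L)\ge n!\,\vol(\Delta)$. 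Note also that Theorem \ref{th-BK} as stated needs the differences of $\A_k$ to generate $\z^n$; this holds for the valuation of Section \ref{sec-valuation} by Remark \ref{rem-diff-A-generates}, but your argument does not cover an arbitrary valuation $v$ for which the theorem is quoted, since otherwise the orbit map is not injective and the degree acquires an index factor.

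The serious gap is in your reverse inequality. The asserted equality $\dim E^k = |\A_k| = \#(k\Delta\cap\z^n)+o(k^n)$ is not ``a lattice-point count'': $\A_k$ is in general a proper subset of $k\Delta\cap\z^n$, and the statement that the two cardinalities agree up to $o(k^n)$ is precisely the Okounkov--Khovanskii theorem on semigroups of integral points, i.e.\ the core content of \cite{LM, KKh-Annals}; combined with the Hilbert-polynomial asymptotics it is equivalent to the theorem you are proving, so as written this step is circular. What is elementary is only the inclusion $\A_k\subseteq k\Delta\cap\z^n$, hence the one-sided bound $|\A_k|\le\#(k\Delta\cap\z^n)\le \vol(\Delta)k^n+O(k^{n-1})$ --- and fortunately that is all you need: together with $\dim E^k=|\A_k|$ (one-dimensional leaves of $v$, \cite[Proposition 2.6]{KKh-Annals}) and $\dim E^k=\frac{\deg(X,L)}{n!}k^n+O(k^{n-1})$ it gives $\deg(X,L)\le n!\,\vol(\Delta)$. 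But this requires $\Delta$ to be bounded (otherwise the lattice-point estimate for $k\Delta$ is vacuous), which you never verify and which needs its own argument bounding $\frac1k v(\sigma/\tau^k)$ uniformly. With the flatness argument repaired, the circular equality replaced by the inequality, and boundedness of $\Delta$ supplied, your two-sided comparison does prove the theorem for the valuations actually used in this paper; the dimension statement then follows from $\vol(\Delta)=\deg(X,L)/n!>0$.
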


\begin{Rem} \label{rem-NO-body}
Above we have only considered a very ample line bundle $L$.  
In the papers \cite{LM, KKh-Annals} the authors consider much more general situation of arbitrary graded linear systems on a variety $X$. They prove statements 
relating the asymptotic behavior of graded linear systems and the dimension and volume of their associated Newton-Okounkov bodies.
\end{Rem}

For each integer $k>0$ let $\A_k$ denote the set of values of the valuation $v$ on $E^k$. That is,
$$\A_k = \{ v(\sigma / \tau^k) \mid \sigma \in E^k \setminus \{0\} \}.$$
Moreover, put:
\begin{equation} \label{equ-Delta_k}
\Delta_k = \frac{1}{k} \conv(\A_k).
\end{equation}
Clearly, $\Delta_k \subset \r^n$ is a rational polytope because $k \Delta_k$ is a lattice polytope. Also for all $k>0$ we have $\Delta_k \subset \Delta$ and:
\begin{equation} \label{equ-Delta}
\Delta = \overline{\bigcup_{k>0} \Delta_k}.
\end{equation}
From \eqref{equ-Delta} we immediately obtain the following:
\begin{Prop} \label{prop-approx-Delta}
For any $\epsilon > 0$ there exists $N>0$ such that for any $k>N$ we have: $$\vol(\Delta \setminus \Delta_k) < \epsilon.$$
\end{Prop}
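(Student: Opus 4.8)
The statement is equivalent to $\lim_{k\to\infty}\vol(\Delta_k)=\vol(\Delta)$: indeed $\Delta_k\subseteq\Delta$ gives $\vol(\Delta\setminus\Delta_k)=\vol(\Delta)-\vol(\Delta_k)$, so once the limit is known one takes $N$ large enough that $k>N$ forces $\vol(\Delta)-\vol(\Delta_k)<\epsilon$. By Theorem \ref{th-NO-body} the set $\Delta$ is a genuine $n$-dimensional convex body, so its boundary is Lebesgue-null and $\vol(\mathrm{int}(\Delta))=\vol(\Delta)$. Thus it suffices to prove the pointwise statement that \emph{every $x\in\mathrm{int}(\Delta)$ lies in $\Delta_k$ for all sufficiently large $k$}: granting this, $\liminf_k\mathbf 1_{\Delta_k}(x)=1$ for a.e.\ $x\in\Delta$, while $\mathbf 1_{\Delta_k}\le\mathbf 1_{\Delta}$ is dominated by an integrable function, so Fatou's lemma gives $\liminf_k\vol(\Delta_k)\ge\vol(\Delta)$, and the reverse inequality is trivial.

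The plan rests on two elementary ``promotion'' operations on the sets $\A_k$ coming from the identity $E^j\cdot E^{k-j}=E^k$ (which holds since $E^k$ is, by definition, the image of $E^{\otimes k}$). First, if $a\in\A_j$ is realized by $\sigma\in E^j$, then $\sigma^m\in E^{jm}$ has $v(\sigma^m/\tau^{jm})=ma$, so $ma\in\A_{jm}$ and hence $\Delta_j\subseteq\Delta_{jm}$ for every $m\ge1$; more generally, products of sections show $\sum_i m_i a_i\in\A_{\sum_i m_i j_i}$ whenever $a_i\in\A_{j_i}$ and $m_i\in\z_{\ge0}$, and $0\in\A_s$ for every $s$ (take $\tau^s$), so $\A_{k-s}\subseteq\A_k$ for $s\le k$ (multiply by $\tau^s$). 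Using these, I would first check that $\bigcup_{j>0}\tfrac1j\A_j$ is dense in $\Delta$: any point of $\Delta_k$ is a convex combination of points of $\tfrac1k\A_k$ with rational weights of common denominator $M$, which by the product operation is approximated by a point of $\tfrac1{kM}\A_{kM}$, and density in $\Delta$ then follows from \eqref{equ-Delta}. Consequently, given $x\in\mathrm{int}(\Delta)$ I can choose $n+1$ points of $\bigcup_j\tfrac1j\A_j$ whose convex hull is a full-dimensional simplex containing $x$ in its interior; clearing denominators by the power operation, this simplex lies in a single $\Delta_J$, so $x\in\mathrm{int}(\Delta_J)$.

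The last step — promoting from $x\in\Delta_J$ to $x\in\Delta_k$ for \emph{all} large $k$ — is the only real subtlety, since a bare subsequence argument (say along $k=J!$) does not address the quantifier ``for all $k>N$'' in the statement. Writing $k=Jm+s$ with $0\le s<J$, the $\tau^s$-multiplication gives $\A_{Jm}\subseteq\A_k$, hence $Jm\,\Delta_J\subseteq Jm\,\Delta_{Jm}=\conv(\A_{Jm})\subseteq\conv(\A_k)=k\,\Delta_k$, i.e.\ $\tfrac{Jm}{k}\Delta_J\subseteq\Delta_k$. Since $\tfrac{k}{Jm}=1+\tfrac{s}{Jm}\le 1+\tfrac1m$ and $x$ is interior to $\Delta_J$, the point $\tfrac{k}{Jm}x$ lies in $\Delta_J$ as soon as $m$ is large (uniformly in $s$, because $\tfrac{k}{Jm}x=x+\tfrac{s}{Jm}x$ and $|\tfrac{s}{Jm}x|\le |x|/m$), so $x\in\Delta_k$ for all sufficiently large $k$. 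This completes the pointwise claim, and with it the proposition.
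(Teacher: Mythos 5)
Your argument is correct, and it is worth noting that the paper itself offers no proof here: Proposition \ref{prop-approx-Delta} is asserted to follow ``immediately'' from \eqref{equ-Delta}, i.e.\ from $\Delta=\overline{\bigcup_k\Delta_k}$. Strictly speaking that density statement alone does not force $\vol(\Delta_k)\to\vol(\Delta)$ for \emph{all} large $k$ (density of a union is compatible with individual $\Delta_k$ being small along a subsequence), so what you supply is exactly the structure the paper is implicitly relying on: the graded semigroup behaviour of the sets $\A_k$. Your promotion operations ($v(\sigma^m/\tau^{jm})=m\,v(\sigma/\tau^j)$ giving $\Delta_j\subseteq\Delta_{jm}$, sums $\sum_i m_i a_i\in\A_{\sum_i m_i j_i}$, and multiplication by $\tau^s$ giving $\A_{k-s}\subseteq\A_k$) are all legitimate since $E^{j}\cdot E^{k-j}\subseteq E^k$ and $v$ is additive on products, and they are what lets you first trap any interior point $x$ of $\Delta$ in the interior of a single $\Delta_J$ (rational convex combinations of $\tfrac1k\A_k$ landing exactly in $\tfrac1{kM}\A_{kM}$, then a full-dimensional simplex with vertices in the dense set, denominators cleared by the power operation), and then — the step the paper's ``immediately'' glosses over — pass from the subsequence $\{Jm\}$ to every large $k$ by writing $k=Jm+s$ and using $\tfrac{Jm}{k}\Delta_J\subseteq\Delta_k$ together with interiority of $x$. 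The Fatou/boundary-null reduction at the start is also fine, since Theorem \ref{th-NO-body} guarantees $\Delta$ is a full-dimensional convex body. So your proof is a complete and correct justification of a statement the paper treats as self-evident; the only cosmetic remark is that your word ``approximated'' in the density step can be replaced by ``equals,'' since the rational convex combination lies exactly in $\tfrac1{kM}\A_{kM}$.
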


Take an integer $k>0$. Let us apply the constructions in Sections \ref{sec-valuation}
and \ref{sec-Kahler} to the embedding $X \hookrightarrow \p(\Sym^k(E^*))$. Here $X \hookrightarrow \p(\Sym^k(E^*))$ is the composition of the embedding
$X \hookrightarrow \p(E^*)$ with the $k$-th Veronese embedding $\p(E^*) \hookrightarrow \p(\Sym^k(E^*))$.

Let $r_k = \dim(E^k)$. Let $\{ \theta_1, \ldots, \theta_{r_k}\}$ be a basis such that the dual basis $\{ \theta_1^*, \ldots, \theta_{r_k}^*\}$ is an orthonormal basis for $(E^k)^*$
with respect to the Hermitian product $H^{\otimes k}$ and such that the values $v(\theta_j / \tau^k)$, $j=1, \ldots, r_k$, are all distinct. It follows that $\Delta_k$ is the convex hull of 
the $(1/k)v(\theta_j / \tau^k)$. 

{For each $j$ let us put $h_j = \theta_j / \tau^k$. 
As in Section \ref{sec-valuation} we choose $\gamma \in \n^n$ such that the constant terms in $t$ of the power series representations of the $\tilde{h}_j$, $j=1, \ldots, r_k$, are all monomials in $\tilde{u}$. Note that we chose $\gamma$ after the orthonormal basis $\{\theta_j\}$ was chosen.}


The construction in Section \ref{sec-Kahler} gives a K\"ahler form $\tilde{\omega}$ on the family $\X$ such that $\omega_1 = \tilde{\omega}_{|X_1}$ is the 
K\"ahler form $k\omega$ and $\omega_0 = \tilde{\omega}_{|X_0}$ is the K\"ahler form associated to the finite subset $\A_k$ and some $c_k \in (\c^*)^{r_k}$.

Recall that we call a K\"ahler form on $(\c^*)^n$ a rational toric K\"ahler form if it is of the form $(1/k) \omega_{\A_k, c_k}$ for some integer $k>0$, a finite subset
$\A_k \subset \z^n$ and $c_k \in (\c^*)^{r_k}$, where $r_k = |\A_k|$. We can now prove the main theorem of this section:
\begin{Th} \label{th-main2}
Let $X$ be a smooth projective variety with an integral K\"ahler form $\omega$ (i.e. the class of $\omega$ lies in $H^2(X, \z)$).
Let $\epsilon > 0$ be given. Then there exists an open subset $U \subset X$ (in the usual classical topology) such that:
\begin{itemize}
\item[(1)] The symplectic volume of $X \setminus U$ is less than $\epsilon$.
\item[(2)] $(U, \omega)$ is symplectomorphic to $(\c^*)^n$ equipped with a rational toric K\"ahler form.
\end{itemize}
\end{Th}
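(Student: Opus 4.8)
The plan is to reduce Theorem \ref{th-main2} to Theorem \ref{th-main} applied to the sequence of Veronese re-embeddings, and then combine the resulting symplectomorphisms with the volume approximation in Proposition \ref{prop-approx-Delta}. By the Lefschetz theorem on $(1,1)$-classes, the integral K\"ahler form $\omega$ lies in $c_1(L)$ for an ample line bundle $L$; replacing $L$ by a suitable power we may assume $L$ is very ample, and after a Moser-trick change of $\omega$ within its class (which preserves the symplectic structure, hence all symplectic conclusions), we may assume $\omega$ is the pull-back of a Fubini--Study form under the Kodaira embedding $X\hookrightarrow \p(E^*)$, $E=H^0(X,L)$, associated to a chosen Hermitian product $H$ on $E^*$.

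First I would fix a valuation $v$ on $\c(X)$ coming from a local coordinate system at a point $p$, chosen as in Remark \ref{rem-diff-A-generates} so that for every $k>0$ the differences of elements of $\A_k$ generate $\z^n$ (since $\A_k \supseteq \A_1$ already has this property once $\A_1$ does). Given $\epsilon>0$, use Proposition \ref{prop-approx-Delta} to pick $k$ with $\vol(\Delta\setminus\Delta_k)<\epsilon/(n!\,k^n)$, normalized appropriately; the scaling bookkeeping here is the routine part. Now apply the construction of Sections \ref{sec-valuation} and \ref{sec-Kahler} to the composed embedding $X\hookrightarrow\p(\Sym^k(E^*))$ with the Hermitian product $H^{\otimes k}$, using an orthonormal-adapted basis $\{\theta_j\}$ with distinct values $v(\theta_j/\tau^k)$ and a $\gamma\in\n^n$ chosen afterwards as in Proposition \ref{prop-perturb-gamma}. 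Theorem \ref{th-main} then yields an open subset $U_0\subset X$ with $(U_0, k\omega)$ symplectomorphic to $((\c^*)^n,\omega_{\A_k,c_k})$; scaling the symplectic form by $1/k$ gives $(U_0,\omega)$ symplectomorphic to $(\c^*)^n$ with the rational toric K\"ahler form $(1/k)\omega_{\A_k,c_k}$, establishing conclusion (2) with $U=U_0$.

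The remaining point, conclusion (1), is the main obstacle: I must show that $U_0$ can be taken to have complement of symplectic volume $<\epsilon$. The symplectic volume of $U_0$ equals that of $((\c^*)^n,\omega_{\A_k,c_k})$ rescaled by $1/k$, which by the Bernstein--Kushnirenko theorem (Theorem \ref{th-BK}) is $k^n\,\vol(\Delta_k)/k^n=\vol(\Delta_k)$ after the $1/k$ normalization — more precisely the symplectic volume of the special fiber is $\vol(\conv(\A_k))$, and dividing the form by $k$ scales it to $k^{-n}\vol(\conv(\A_k))=\vol(\Delta_k)$. On the other hand the total symplectic volume of $(X,\omega)$ is $\vol(\Delta)$ by Theorem \ref{th-NO-body} (the degree of $(X,L)$ equals $n!\,\vol(\Delta)$, and the symplectic volume is $1/n!$ times the degree). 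Since the gradient-Hamiltonian flow $\phi_{-1}\colon X_0\to X_1$ is volume-preserving onto its image $U_0$, we get $\vol(X\setminus U_0)=\vol(\Delta)-\vol(\Delta_k)=\vol(\Delta\setminus\Delta_k)<\epsilon$ by the choice of $k$ via Proposition \ref{prop-approx-Delta}. The one subtlety to be careful about is that $\Delta_k\subseteq\Delta$ so that the difference of volumes is genuinely the volume of the set-difference; this is exactly \eqref{equ-Delta_k}--\eqref{equ-Delta}. Putting these together completes the proof.
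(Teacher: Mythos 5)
Your proposal is correct and follows essentially the same route as the paper: reduce to a very ample $L$, choose $k$ via Proposition \ref{prop-approx-Delta}, run the constructions of Sections \ref{sec-valuation}--\ref{sec-Kahler} on the $k$-th Veronese embedding with $H^{\otimes k}$, apply Theorem \ref{th-main} and rescale by $1/k$, and compare volumes via Theorems \ref{th-BK} and \ref{th-NO-body}. The only quibble is your initial choice of $k$ with $\vol(\Delta\setminus\Delta_k)<\epsilon/(n!\,k^n)$, which is circular as stated (the tolerance depends on $k$) and unnecessary, since your own final bookkeeping shows that $\vol(\Delta\setminus\Delta_k)<\epsilon$ already gives $\vol(X\setminus U)<\epsilon$, exactly as in the paper.
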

\begin{proof}
{Without loss of generality assume that $\omega$ represents the class of a very ample line bundle $L$.}
As in Proposition \ref{prop-approx-Delta} choose $k$ such that $\vol(\Delta \setminus \Delta_k) < \epsilon$ and consider the family $\X$ and the K\"ahler form $\tilde{\omega}$ as above. 
Let us equip $X_0 = (\c^*)^n$ with the K\"ahler form $(1/k)\omega_0$. Then $(\c^*)^n$ is an $(S^1)^n$-Hamiltonian manifold and the image of its moment map is 
the interior of the rational polytope $\Delta_k$. It follows that the symplectic volume of $((\c^*)^n, (1/k)\omega_0)$ is equal to $n! \vol(\Delta_k)$ (see Theorem \ref{th-BK}). 
By Theorem \ref{th-NO-body}, the symplectic volume of $(X, \omega)$ equals the Euclidean volume of $\Delta$. Thus,
the symplectic volume of $((\c^*)^n, (1/k)\omega_0) > $ the symplectic volume of $(X, \omega) - \epsilon$. Let $U \subset X$ be the open subset constructed in Theorem \ref{th-main}
such that $(U, k\omega)$ is symplectomorphic to $((\c^*)^n, \omega_0)$. Then the symplectic volume of $(U, \omega) = $ the symplectic volume of $((\c^*)^n, (1/k)\omega_0) > $ 
the symplectic volume of $(X, \omega) - \epsilon$. This finishes the proof.
\end{proof}

\begin{Rem} \label{rem-Delta-conv-A}
We know that, for any $k>0$, the polytope $\Delta_k = (1/k)\conv(\A)$ is contained in the Newton-Okounkov body $\Delta$. Indeed, if for some $k>0$ it happens 
that $\Delta = \Delta_k$ then $(X, \omega)$ has the same symplectic volume as the zero fiber $X_0$ equipped with the K\"ahler form $(1/k)\omega_{\A_k, c_k}$. 
It follows that the open subset $U$ is dense in $X$. This is the case considered in \cite{HK}.
Example \ref{ex-2} is an example of this (desirable) situation.
\end{Rem}

\section{A simplicial Newton-Okounkov body} \label{sec-simplex-NO-body}
Let $X$ be a complex projective variety of dimension $n$ with a very ample line bundle $L$.
In this section we recall a result from \cite{Seppanen} and \cite{AKL} which states that for an appropriate choice of a valuation $v$ (that depends on the line bundle $L$) 
the Newton-Okounkov body $\Delta$ is a simplex of a very specific form (Theorem \ref{th-NO-body-simplex}). 
 In Section \ref{sec-GW-packing} we will use this to prove a result about symplectic ball packings of $X$ 
(Corollary \ref{cor-symplectic-packing}).

As in Remark \ref{rem-diff-A-generates} let $\sigma_1, \ldots, \sigma_n \in E$ be sections of $L$ in general position such that the corresponding 
divisors $D_i = \textup{div}(\sigma_i)$ are smooth prime divisors intersecting transversely at $p$. Fix a section $\tau \in E$ such that $\tau(p) \neq 0$.
For each $i = 1, \ldots, n$ put:
$$u_i = {\frac{\sigma_i}{\tau}}.$$
Then the $u_i$ are rational function on $X$ that are regular at $p$ and form a local system of coordinates in a neighborhood of $p$.
Take the lexicographic order on $\z^n$ and let $v: \c(X) \setminus \{0\} \to \z^n$ be the initial term valuation as defined in \eqref{equ-valuation} in Section \ref{sec-valuation}.
The valuation $v$ can alternatively be defined as follows. For each $i=1, \ldots, n$ let $X_i$ denote the irreducible component of the intersection
$D_1 \cap \cdots \cap D_i$ containing $p$, in particular $X_n = \{p\}$ {(in fact if the $\sigma_i$ defining the $D_i$ are in general position then $X_1, \ldots, X_{n-1}$ are 
automatically irreducible).} Take $0 \neq f \in \c(X)$. Let $v_1 = \ord_{X_1}(f)$. Then $u_1^{-v_1}f$ is a rational function which does no have a zero or pole along the
hypersurface $X_1$. Let $f_1 = (u_1^{-v_1}f)_{|X_1}$ and $v_2 = \ord_{X_2}(f_1)$. Continue in the same manner to arrive at $(v_1, \ldots, v_n)$.
One verifies that in fact $v(f) = (v_1, \ldots, v_n)$.

We have the following. For completeness we include the nice short proof which is taken from \cite[Theorem 3.3]{Seppanen}.
\begin{Th} \label{th-NO-body-simplex}
Let the notation be as above.
\begin{itemize}
\item[(1)] The Newton-Okounkov body $\Delta = \Delta(X, L, v)$ is the simplex
$$\conv \{0, e_1, \ldots, e_{n-1}, de_n\},$$
where $d$ is the degree of the line bundle $L$, i.e. the degree of $X$ as a subvariety of $\p(E^*)$.
\item[(2)] Moreover, for any $\epsilon > 0$ one can find a sufficiently large integer $k$ such that the corresponding rational polytope 
$\Delta_k = (1/k)\conv(\A_k)$ contains a simplex:
$$\conv \{0, e_1, \ldots, e_{n-1}, d'e_n\},$$ 
for some $0 < d' \leq d$ with $|d-d'| < \epsilon$. Recall that $\A_k = \{ v(\sigma / \tau^k) \mid \sigma \in E^k \setminus \{0\}\}$ (see Section \ref{sec-enlarge}).
\end{itemize}
\end{Th}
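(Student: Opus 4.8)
The plan is to exhibit, for a suitable valuation $v$, an explicit filtration of $X$ by subvarieties and then read off the Newton--Okounkov body from the orders of vanishing. First I would choose $\sigma_1, \ldots, \sigma_n \in E$ in general position so that the divisors $D_i = \textup{div}(\sigma_i)$ are smooth, intersect transversely at $p$, and such that the successive intersections $X_i = $ (component of $D_1 \cap \cdots \cap D_i$ through $p$) are irreducible of dimension $n-i$, with $X_n = \{p\}$. Here the key point is that for the line bundle $L$ itself (not a power), $X_1 = D_1$ is a divisor linearly equivalent to $L_{|X}$, and inductively $X_i$ carries $L_{|X_i}$ as a very ample bundle with one fewer global section available for slicing; the degree of $X_i$ with respect to $L$ drops by a factor governed by the self-intersection, so that $X_{n-1}$ is a curve of degree $d = \deg(X, L)$.

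For part (1): I would show that the Newton--Okounkov body $\Delta(X,L,v)$ contains the vertices $0, e_1, \ldots, e_{n-1}, d e_n$, and then invoke Theorem \ref{th-NO-body} to conclude equality by a volume count. The vertex $0$ comes from $v(\tau/\tau) = 0$; the vertex $e_i$ comes from $v(\sigma_i/\tau)$, since $u_i = \sigma_i/\tau$ vanishes to order exactly $1$ along $X_1, \ldots, X_{i-1}$ (it does not, by transversality and the iterative definition of $v$) --- actually one must be slightly careful: $v(\sigma_i/\tau) = e_i$ requires that $u_i$ restricted to $X_1 \cap \cdots \cap X_{i-1}$ vanishes to order $1$ along $X_i$ and nowhere before, which follows from transversality of the $D_j$. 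The vertex $d e_n$ is the subtle one: it should arise from a section of $E$ vanishing to order $d$ at $p$ along the curve $X_{n-1}$, i.e. one exhibits $\sigma \in E$ with $v(\sigma/\tau) = (0,\ldots,0,d)$ using that $(u_1^0 \cdots u_{n-1}^0 \cdot \sigma/\tau)_{|X_{n-1}}$ is a rational function on the degree-$d$ curve $X_{n-1}$ whose divisor can be made to be $d\cdot p$ (a degree-$d$ point). Since $\Delta$ is convex it then contains the simplex $\conv\{0, e_1, \ldots, e_{n-1}, de_n\}$, which has Euclidean volume exactly $d/n!$; by Theorem \ref{th-NO-body} this equals the volume of $\Delta$, forcing equality. (This is the argument attributed to \cite[Theorem 3.3]{Seppanen}.)

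For part (2): I would combine part (1) with the approximation statement $\Delta = \overline{\bigcup_{k>0} \Delta_k}$ (equation \eqref{equ-Delta}) and Proposition \ref{prop-approx-Delta}. Fix a small $\epsilon' > 0$; choose a compact subsimplex $\conv\{0, e_1, \ldots, e_{n-1}, d' e_n\}$ of the \emph{interior} of $\Delta$ with $d' < d$ and $d - d' < \epsilon'$. Every vertex of this subsimplex lies in the interior of $\Delta$, hence, since the $\Delta_k$ form an increasing (up to refinement) family whose closure of union is $\Delta$, each vertex lies in $\Delta_k$ for all $k$ large, and by convexity of $\Delta_k$ the whole subsimplex lies in $\Delta_k$ for $k \gg 0$. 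The one genuine subtlety is that the vertices $e_i$ and $0$ lie on the \emph{boundary} of $\Delta$, not the interior, so "interior subsimplex" must be interpreted relative to these faces; one fixes this by perturbing only toward $d'e_n$ and noting that $0, e_1, \ldots, e_{n-1}$ themselves already belong to every $\Delta_k$ with $k \geq 1$ (they are realized by $\tau^k$ and $\sigma_i \tau^{k-1}$ respectively, since $\A_k \supseteq k\cdot(\text{edges of the } e_i)$), so only the last vertex needs the limiting argument.

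The main obstacle I expect is part (1)'s identification of the vertex $d e_n$: one must produce a genuinely global section of $L$ (equivalently an effective divisor in $|L_{|X_{n-1}}|$ supported, to the required order, at the single point $p$ of the curve $X_{n-1}$), and argue that no section can achieve last-coordinate value exceeding $d$. The upper bound is automatic once the volume identity is in place, but seeing the lower bound cleanly --- that some section of $E$, not merely of $E^k$, realizes $(0, \ldots, 0, d)$ --- requires knowing that $L_{|X_{n-1}}$ restricted to a degree-$d$ curve has enough sections (it is very ample of degree $d$), and that Riemann--Roch / the geometry of the curve lets us concentrate the divisor at $p$; this is where the short argument from \cite{Seppanen} does the real work.
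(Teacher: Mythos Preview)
There is a genuine gap in both parts, and it traces to the same missing construction.

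\textbf{Part (1).} You try to exhibit a section $\sigma \in E = H^0(X,L)$ with $v(\sigma/\tau) = (0,\ldots,0,d)$. In general no such section exists: this would require the divisor $d\cdot p$ to lie in the linear system $|L_{|X_{n-1}}|$ on the curve $X_{n-1}$, and for instance if $X_{n-1}$ has positive genus this fails for generic $p$. The vertex $de_n$ lies in $\Delta$ only because $\Delta$ is \emph{closed}; it is approached through sections of high powers $L^{\otimes k}$, not realized at level $k=1$. So the Riemann--Roch argument you allude to cannot do what you hope, and this is \emph{not} what the Sepp\"anen argument does.

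\textbf{Part (2).} Your density argument breaks because $d'e_n$ sits on the edge $[0,\,de_n]$ of $\Delta$, hence on the \emph{boundary}, not in the interior. You correctly flag the boundary issue for $0, e_1, \ldots, e_{n-1}$ and resolve it by producing them explicitly in every $\A_k$, but then assume the ``limiting argument'' handles $d'e_n$; it does not, for exactly the reason you identified for the other vertices. Knowing only $\Delta = \overline{\bigcup_k \Delta_k}$ gives points of $\Delta_k$ near $d'e_n$, but not \emph{on the $e_n$-axis}, and a nearby interior point $q$ with positive first coordinates cannot serve as a vertex of a simplex containing $d'e_n$ together with $0, e_1, \ldots, e_{n-1}$.

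The missing idea (and the actual content of the paper's proof) is an explicit construction producing a point $d''e_n$ on the axis inside some $\Delta_k$. One restricts to the curve $X_{n-1}$: its one-dimensional Newton--Okounkov body is $[0,d]$, so for any $d' < d$ there is $m>0$ and $\sigma' \in H^0(X_{n-1}, L_{|X_{n-1}}^{\otimes m})$ with $\frac{1}{m}\ord_p(\sigma'/\tau'^m) = d'' \in (d',d]$. Then one lifts $\sigma'^N$ to a section $\sigma \in H^0(X, L^{\otimes mN})$ using surjectivity of the restriction map $H^0(X, L^{\otimes j}) \to H^0(X_{n-1}, L_{|X_{n-1}}^{\otimes j})$ for $j \gg 0$. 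This $\sigma$ satisfies $\frac{1}{mN}v(\sigma/\tau^{mN}) = d''e_n$, which together with $0, e_1, \ldots, e_{n-1} \in \Delta_{mN}$ gives (2). Part (1) then follows from (2) by letting $d' \to d$, using that $\Delta$ is closed, and finishing with your volume argument.
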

\begin{proof}
First we note that for $i=1, \ldots, n$, we have $v(\sigma_i / \tau) = e_i$ where $e_i$ is the $i$-th standard basis element in $\r^n$. Also 
$v(\tau / \tau) = v(1) = 0$. Thus the convex body $\Delta$ contains the points $\{0, e_1, \ldots, e_n\}$.
One also observes that the degree of the line bundle $L_{n-1}$ on the curve $X_{n-1}$ is equal to $d$, the degree of $L$.
Let $v_{n-1}$ be the $\z$-valued valuation on $\c(X_{n-1})$ which is the order of zero-pole at the point $p$. Also let $\tau' = \tau_{|X_{n-1}}$.
Using Theorem \ref{th-NO-body} it is not difficult to see that the Newton-Okounkov body of $(X_{n-1}, L_{|X_{n-1}}, v_{n-1}, \tau')$ is the line segment $[0,d]$.
Thus for any $0 < d' < d$ we can find $m>0$ and a section $\sigma ' \in H^0(X_{n-1}, L_{|X_{n-1}}^{\otimes m})$ such that $d' < \frac{1}{m} v_{n-1}(\sigma'/\tau'^m) \leq d$.
Since $L$ is ample we can find $N_0 \in \n$ such that for all $j > N_0$ the restriction map
$$i_j: H^0(X, L^{\otimes j}) \to H^0(X_{n-1}, L^{\otimes j}_{|X_{n-1}}),$$ is surjective. Thus we conclude that for sufficiently large $N \in \n$ there is a 
section $\sigma \in H^0(X, L^{\otimes mN})$ such that
$i_{mN}(\sigma) = \sigma'^N$. Then $v(\sigma / \tau^{mN}) = (0, \ldots, 0, Nv_{n-1}(\sigma'/\tau'^m))$ and hence:
\begin{equation} \label{equ-d''}
\frac{1}{mN}v(\sigma/ \tau^{mN}) = (0, \ldots, 0, \frac{1}{m}v_{n-1}(\sigma'/\tau'^m)) \in \Delta.
\end{equation}
Let us write $d'' = \frac{1}{m} v_{n-1}(\sigma'/\tau'^m)$. The equation \eqref{equ-d''} tells us that the point $d''e_n$ lies in $\Delta$ and hence the 
simplices $\conv\{0, e_1, \ldots, e_{n-1}, d'e_n\} \subset \conv\{0, e_1, \ldots, e_{n-1}, d''e_n \}$ are contained in $\Delta$. 
This proves the claim in (2). To prove (1) we note that $\Delta$ is closed and $d'$ is arbitrary. It follows that 
the point $de_n = (0, \ldots, 0, d)$ lies in $\Delta$. This shows that the simplex $\conv\{0, e_1, \ldots, e_{n-1}, de_n\}$ is contained in 
$\Delta$. Finally, the volume of this simplex is equal to $d/n!$. On the other hand, by Theorem \ref{th-NO-body} 
the volume of $\Delta$ is also equal to $d/n!$. This finishes the proof of (1).
\end{proof}

\section{Lower bounds on Gromov width and symplectic ball packings} \label{sec-GW-packing}
In this section, as applications of results in Sections \ref{sec-main} and \ref{sec-enlarge}, we prove general statements about the Gromov width and 
symplectic ball packings of smooth projective varieties.

First we state a well-known fact about the Gromov width of a Hamiltonian $T$-space (see \cite{Traynor, Lu, Pabiniak}). 
Let $(M, \omega)$ be a (not necessarily compact) symplectic manifold of (real) dimension $2n$. Let us assume that we have an effective Hamiltonian action of the torus $T = (S^1)^n$
on $M$ (note that the dimension of $T$ is half of the dimension of $M$). In this case $M$ is usually called a toric symplectic manifold. Let us denote the moment map of 
$M$ by $\Phi: M \to \Lie(T)^* \cong \r^n$. The toric manifold $M$ is called {\it proper} if there exists an open and convex subset $\mathcal{T} \subset \Lie(T)^*$ 
containing the image $\Phi(M)$ of the moment map and such that $\Phi$ regarded as a map from $M$ to $\mathcal{T}$ is proper.

We will need the following definition.
\begin{Def} \label{def-size-simplex}
Let $\Delta(R)$ denote the (open) simplex $\{ (x_1, \ldots, x_n) \in \r_{> 0}^n \mid x_1 + \cdots + x_n < R\}$ in $\r^n$ of size $R$. 
We say that a simplex in $\r^n$ has {\it size} $R$ if it is obtained from $\Delta(R)$ after applying a linear transformation in $\textup{GL}(n, \z)$ and a translation in $\r^n$.
\end{Def}

\begin{Th}[Proposition 2.5 \cite{Pabiniak}] \label{th-Milena}
As above let $M$ be a toric manifold with a proper $T$-action. Suppose there is an (open) simplex of size $R$ that fits into the image of the moment map $\Phi(M)$.
{Then for any $\rho < R$ a ball of capacity $\rho$ (i.e. a ball with radius $\sqrt{\rho/ \pi}$) embeds symplectically into $(M, \omega)$, and thus the Gromov width of $M$ is at least $R$.} 
\end{Th}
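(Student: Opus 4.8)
The plan is to follow Pabiniak's argument \cite{Pabiniak}, whose core is an explicit toric ball embedding going back to Traynor \cite{Traynor}. \textbf{Step 1 (normalizing the simplex).} By Definition \ref{def-size-simplex} the given simplex of size $R$ inside $\Phi(M)$ is obtained from the standard open simplex $\Delta(R)=\{x\in\r^n_{>0}:x_1+\cdots+x_n<R\}$ by applying an element of $\textup{GL}(n,\z)$ and a translation. I would reparametrize the torus $T$ by a suitable automorphism, whose action on the cocharacter lattice lies in $\textup{GL}(n,\z)$, and adjust $\Phi$ by a constant; this transforms $\Phi(M)$ by an element of $\textup{GL}(n,\z)\ltimes\r^n$ but changes neither $(M,\omega)$ nor the Hamiltonian $T$-space up to isomorphism. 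After such a harmless change we may assume the given simplex is exactly $\Delta(R)$, so $\Delta(R)\subseteq\Phi(M)$, and the task reduces to producing, for each $\rho<R$, a symplectic embedding of a ball of capacity $\rho$ into $(M,\omega)$.

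\textbf{Step 2 (action--angle coordinates over the open simplex).} Since $\Delta(R)$ is open in $\r^n$ it is contained in the interior of $\Phi(M)$; for a symplectic toric manifold with proper moment map this interior is exactly the set of regular values over which $T$ acts freely, and over it $\Phi$ is a Lagrangian $T^n$-fibration. As $\Delta(R)$ is contractible, the Arnold--Liouville action--angle theorem gives a $T$-equivariant symplectomorphism
\[
\bigl(\Phi^{-1}(\Delta(R)),\,\omega\bigr)\;\cong\;\left(\Delta(R)\times T^n,\ \sum_{i=1}^n dx_i\wedge d\theta_i\right),
\]
with $T^n=\r^n/\z^n$ and the $T$-moment map on the right equal to projection onto the $x$-factor (normalizations chosen so that a simplex of integral size $R$ corresponds to capacity $R$). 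Hence it is enough to embed a ball of capacity $\rho<R$ into this open toric model.

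\textbf{Step 3 (the ball in the open toric model --- the main obstacle).} This is the step I expect to be hardest. The ball $B_{2n}(\sqrt{\rho/\pi})$ with its standard form is itself toric: its (suitably normalized) moment image is the half-open simplex $\{x\in\r^n_{\geq 0}:\sum x_i<\rho\}$, and over the open part $\Delta(\rho)$ it agrees with the model of Step 2. The obstruction to extending the tautological embedding $\Phi^{-1}(\Delta(\rho))\hookrightarrow\Phi^{-1}(\Delta(R))$ to the whole ball is that the ball's torus orbits collapse along the coordinate faces $\{x_i=0\}$, which are absent from $\Delta(R)$. Here one must use the extra room $R-\rho>0$: following Traynor's explicit construction \cite{Traynor}, one deforms the ball near each such face into a long, thin region that stretches out in the $x$-directions instead of collapsing a circle, yielding a genuine symplectic embedding $B_{2n}(\sqrt{\rho/\pi})\hookrightarrow\Delta(R)\times T^n$ as soon as $\rho<R$. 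For $n=1$ this is the elementary fact that an open disk of area $\rho$ embeds symplectically into an open cylinder of area $R>\rho$ (place a thin simply connected region running the length of the cylinder); the general case is the coordinatewise elaboration, and it is the explicit construction together with the volume bookkeeping showing $\rho<R$ suffices that constitutes the real technical content.

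\textbf{Step 4 (conclusion).} Composing the embedding of Step 3 with the symplectomorphism of Step 2 and the inclusion $\Phi^{-1}(\Delta(R))\hookrightarrow M$ gives, for every $\rho<R$, a symplectic embedding of a ball of capacity $\rho$ into $(M,\omega)$. Since the Gromov width of $(M,\omega)$ is by definition the supremum of the capacities of symplectically embedded balls, it is at least $\sup_{\rho<R}\rho=R$, which is the assertion.
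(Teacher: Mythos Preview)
The paper does not give its own proof of Theorem \ref{th-Milena}; it is stated as a citation of \cite[Proposition 2.5]{Pabiniak} (with roots in \cite{Traynor, Lu}) and is then used as a black box in Corollaries \ref{cor-Gromov-width-torus}, \ref{cor-GW-NO-body} and \ref{cor-symplectic-packing}. So there is nothing in the paper to compare your argument against.

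That said, your sketch is a faithful outline of the argument in the cited references. Step 1 (normalize via $\textup{GL}(n,\z)\ltimes\r^n$) and Step 2 (action--angle coordinates over the open simplex, yielding $\Delta(R)\times T^n$ with $\sum dx_i\wedge d\theta_i$) are exactly how Pabiniak sets things up, and you correctly identify Step 3 as the technical core: embedding a ball of capacity $\rho<R$ into the open model $\Delta(R)\times T^n$, which is precisely Traynor's construction. Your description of the obstruction --- the collapsing circles along the faces $\{x_i=0\}$ that are absent from $\Delta(R)$ --- and of the remedy --- using the slack $R-\rho$ to ``unwind'' those collapsing orbits into the $x$-directions --- is the right intuition. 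If you were to write this out fully you would need to supply the explicit area-preserving embeddings in each factor (the $n=1$ disk-into-cylinder map) and check they assemble to a genuine symplectic embedding of the full ball, not just of its free locus; but this is exactly what the cited papers do, and your outline points in the right direction.
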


The following is an immediate corollary of Theorem \ref{th-Milena}. As in Section \ref{sec-prelim} 
let $\A \subset \z^n$ be a finite subset and $c \in (\c^*)^r$ where $r = |\A|$. 
\begin{Cor} \label{cor-Gromov-width-torus}
The Gromov width of $((\c^*)^n, \omega_{\A, c})$ is at least $R$ 
where $R$ is the size of the largest (open) simplex that fits in the interior of  
$\conv(\A)$. In particular, the Gromov width of $((\c^*)^n, \omega_{\A, c})$ is at least $1$.
\end{Cor}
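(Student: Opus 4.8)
The plan is to derive Corollary \ref{cor-Gromov-width-torus} directly from Theorem \ref{th-Milena} by verifying that $((\c^*)^n, \omega_{\A, c})$ is a toric symplectic manifold to which that theorem applies. First I would recall from Section \ref{sec-prelim} that the compact torus $(S^1)^n$ acts on $(\c^*)^n$ by multiplication, this action is effective and Hamiltonian for $\omega_{\A, c}$, and its moment map image is the interior $\textup{int}(\conv(\A))$ of the convex hull of $\A$. Since $\dim_\r (\c^*)^n = 2n$ equals twice $\dim (S^1)^n$, this exhibits $(\c^*)^n$ as a toric symplectic manifold in the sense of Theorem \ref{th-Milena}.

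Next I would check the properness hypothesis: I need an open convex set $\mathcal{T} \subset \Lie(T)^* \cong \r^n$ containing $\Phi((\c^*)^n)$ such that $\Phi$ is proper as a map into $\mathcal{T}$. The natural choice is $\mathcal{T} = \textup{int}(\conv(\A))$ itself, which is open and convex and contains the image. That the moment map $\Phi: (\c^*)^n \to \textup{int}(\conv(\A))$ is proper is the standard fact that the toric moment map is a proper fibration onto the interior of its polytope; concretely, the fibers over a compact subset $K \subset \textup{int}(\conv(\A))$ are compact because the ``radial'' part of $(\c^*)^n$ (the $\r^n_{>0}$ factor under the polar decomposition $(\c^*)^n \cong (S^1)^n \times \r^n_{>0}$) maps homeomorphically, via a Legendre-type transform determined by the K\"ahler potential $K_{\A,c}$, onto $\textup{int}(\conv(\A))$. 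I would only sketch this, citing the preliminaries of Section \ref{sec-prelim} and the standard theory of toric K\"ahler manifolds, rather than redo the Legendre transform computation.

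With properness established, Theorem \ref{th-Milena} applies: if an open simplex of size $R$ fits inside $\Phi((\c^*)^n) = \textup{int}(\conv(\A))$, then the Gromov width of $((\c^*)^n, \omega_{\A, c})$ is at least $R$. Taking the supremum over all such $R$ gives the first assertion. For the final clause, I would observe that $\conv(\A)$ is a lattice polytope of dimension $n$ (it is $n$-dimensional because the differences of elements of $\A$ generate $\z^n$), so after translating one vertex to the origin it contains, by a suitable $\textup{GL}(n,\z)$ change of coordinates applied to any full-dimensional lattice simplex spanned by vertices, a unimodular corner, hence an open simplex of size arbitrarily close to $1$ in its interior; more simply, $\conv(\A)$ contains a translate of the standard simplex $\{x \in \r^n_{>0} : \sum x_i \le 1\}$ up to a $\textup{GL}(n,\z)$-transformation, so simplices of every size $R < 1$ fit in the interior. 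Thus the Gromov width is at least $1$.

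The main obstacle I anticipate is pinning down the properness statement cleanly: one must be careful that $(\c^*)^n$ is noncompact and its moment map image is an \emph{open} polytope, so properness into the ambient $\r^n$ fails and one genuinely needs the relative notion with $\mathcal{T} = \textup{int}(\conv(\A))$. Verifying that the radial part of the moment map is a homeomorphism onto this open polytope (equivalently that $K_{\A,c}$ has gradient image exactly the interior of $\conv(\A)$ and is strictly convex) is the technical heart; everything else is a direct quotation of Theorem \ref{th-Milena} together with an elementary observation about lattice polytopes containing a unimodular simplex.
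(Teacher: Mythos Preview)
Your approach is exactly the paper's: the paper simply labels this an ``immediate corollary'' of Theorem~\ref{th-Milena} and gives no further argument, so your plan to verify the toric and properness hypotheses and then invoke that theorem is precisely what is being left implicit. Your identification of properness (relative to $\mathcal{T} = \textup{int}(\conv(\A))$, not to all of $\r^n$) as the only point needing care is on target.

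One genuine gap to flag in your treatment of the ``in particular'' clause: the assertion that a full-dimensional lattice simplex spanned by points of $\A$ has a ``unimodular corner'' is false in general (think of a Reeve-type simplex in dimension $\geq 3$), so that sentence does not stand as written. The paper does not justify this clause either. In the paper's applications the set $\A$ arises from the construction of Section~\ref{sec-valuation}, and Remark~\ref{rem-diff-A-generates} shows one can arrange $\{0, e_1, \ldots, e_n\} \subset \A$, which makes the unit simplex visibly contained in $\conv(\A)$; that is the cleanest route to the ``at least $1$'' conclusion. If you want to argue it for arbitrary $\A$ with $\A - \A$ generating $\z^n$, you would need a separate lemma to the effect that such a lattice polytope always contains a (real-translated, $\textup{GL}(n,\z)$-transformed) unit simplex, and that is not as elementary as your sketch suggests.
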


As usual let $X$ be a smooth projective variety of dimension $n$. Let $L$ be an ample line bundle on $X$ and $\omega$ a K\"ahler form on $X$ 
representing the Chern class $c_1(L)$. 
{Also let $v$ be the $\z^n$-valued valuation on the field of rational functions $\c(X)$ constructed in Section \ref{sec-valuation} and 
let $\Delta = \Delta(X, L, v)$ denote the associated Newton-Okounkov body.}
\begin{Cor} \label{cor-GW-NO-body}
The Gromov width of $(X, \omega)$ is at least $R$, where $R$ is {the supremum of the sizes of (open) simplices that fit in the 
interior of the convex body}
$\Delta = \Delta(X, L, v)$ (see Definition \ref{def-size-simplex}). In particular, if $L$ is very ample the Gromov width of $(X, \omega)$ is at least $1$.
\end{Cor}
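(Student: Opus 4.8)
The plan is to derive the bound from the toric open subset of $X$ supplied by Theorem~\ref{th-main}, combined with the toric Gromov-width estimate of Corollary~\ref{cor-Gromov-width-torus} and two elementary facts: the Gromov width does not decrease under a symplectic embedding of an open subset, and it scales linearly in the symplectic form. Fix $\rho < R$. By the definition of $R$ there is a simplex $S \subset \r^n$ of some size $R'$ with $\rho < R' < R$ lying in the interior of $\Delta = \Delta(X, L, v)$; I would keep $S$ fixed throughout.

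The first step is to place $S$ inside one of the finite-level rational polytopes $\Delta_k = \frac{1}{k}\conv(\A_k)$. Since a product of $m$ sections from $E^k$ lies in $E^{mk}$ and valuations add, one has $\A_k + \cdots + \A_k$ ($m$ summands) $\subseteq \A_{mk}$, hence $\Delta_k \subseteq \Delta_{mk}$ for all $m \geq 1$, so $\{\Delta_k\}_{k>0}$ is directed under inclusion. Together with $\Delta = \overline{\bigcup_{k>0}\Delta_k}$ (equation~\eqref{equ-Delta}) this gives $\textup{int}(\Delta) = \bigcup_{k>0}\textup{int}(\Delta_k)$: around any interior point of $\Delta$, density of $\bigcup_k \Delta_k$ lets one choose $n+1$ affinely independent points of this union enclosing it, and directedness places all of them in a single $\Delta_k$, so the point is interior to some $\Delta_k$. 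As $S$ is compact, $S \subset \textup{int}(\Delta_k)$ for some $k$.

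Next I would apply the construction of Section~\ref{sec-enlarge} at this $k$. By Theorem~\ref{th-main} applied to the $k$-th Veronese re-embedding, there is an open $U \subset X$ with $(U, k\omega)$ symplectomorphic to $((\c^*)^n, \omega_{\A_k, c_k})$, hence $(U, \omega)$ symplectomorphic to $((\c^*)^n, \tfrac1k\omega_{\A_k, c_k})$, a toric symplectic manifold whose $(S^1)^n$-moment image is the interior of $\Delta_k$. Applying Corollary~\ref{cor-Gromov-width-torus} to $\omega_{\A_k,c_k}$ (moment polytope $\conv(\A_k) = k\Delta_k$; the differences of $\A_k$ generate $\z^n$ since $\A_k \supseteq \A$), and using that both the Gromov width and the notion of size rescale linearly under the dilation $x \mapsto x/k$, the Gromov width of $((\c^*)^n, \tfrac1k\omega_{\A_k, c_k})$ is at least the size of the largest open simplex in $\textup{int}(\Delta_k)$, hence at least $R'$ (since $S$ is such a simplex). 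As any symplectic ball embedded in $U$ is also embedded in $X$, the Gromov width of $(X, \omega)$ is at least $R' > \rho$; letting $\rho \uparrow R$ proves the first assertion. For the final sentence, I would specialize $v$ to the valuation attached in Section~\ref{sec-simplex-NO-body} to sections $\sigma_1, \dots, \sigma_n$ in general position (an instance of the Section~\ref{sec-valuation} construction): by Theorem~\ref{th-NO-body-simplex}(1), $\Delta(X, L, v) = \conv\{0, e_1, \dots, e_{n-1}, de_n\}$ with $d = \deg(X, L) \geq 1$, whose interior contains the open standard simplex $\{x \in \r_{>0}^n \mid x_1 + \cdots + x_n < 1\}$ of size $1$; thus $R \geq 1$ for this $v$, and the Gromov width of $(X, \omega)$ is at least $1$.

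The main obstacle is the first step: moving from the Newton--Okounkov body $\Delta$, which need not be a polytope, to a single finite-level rational polytope $\Delta_k$ that engulfs a prescribed interior simplex. The rest --- monotonicity and scaling of the Gromov width, and Theorem~\ref{th-main} and Corollary~\ref{cor-Gromov-width-torus} --- is standard or already in place. (When $v$ is chosen as in Section~\ref{sec-simplex-NO-body}, part~(2) of Theorem~\ref{th-NO-body-simplex} exhibits an explicit such $\Delta_k$, so there this step is immediate.)
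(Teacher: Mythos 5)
Your proposal is correct and follows essentially the paper's own route: pass from $\Delta$ to a finite-level rational polytope $\Delta_k$ containing the chosen simplex, invoke the toric open subset $(U,\omega)\cong((\c^*)^n,\tfrac{1}{k}\omega_{\A_k,c_k})$ supplied by Theorems \ref{th-main} and \ref{th-main2}, and conclude with the toric bound of Theorem \ref{th-Milena} (Corollary \ref{cor-Gromov-width-torus}); your directedness argument $\Delta_k\subseteq\Delta_{mk}$ is a genuine service, since it justifies the step the paper dismisses with ``it follows from the definition of $\Delta$'', and your appeal to Theorem \ref{th-NO-body-simplex}(1) for the very ample case is a legitimate variant of the paper's implicit observation that $0,e_1,\ldots,e_n\in\A$ for the coordinates of Remark \ref{rem-diff-A-generates}. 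One small slip: the open simplex $S$ is not compact, so you should replace it (as the paper in effect does) by the closure of a slightly smaller simplex of size $\rho'$ with $\rho<\rho'<R'$ placed inside $S$; this closed simplex is compact and still lies in $\textup{int}(\Delta)$, hence in some $\textup{int}(\Delta_k)$, and the rest of your argument goes through unchanged.
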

\begin{proof}
{Let $\epsilon > 0$ be given. Then by assumption we can find  $\rho < R$ with $|R - \rho| < \epsilon$ such that the closure of 
a simplex $ a + W(\Delta(\rho))$, for some $W \in \textup{GL}(n, \z)$ and $a \in \r^n$, lies in the 
interior of $\Delta$. It follows from the definition of the Newton-Okounkov body $\Delta$ that 
we can find $k > 0$ such that the rational polytope $\Delta_k$ contains the simplex $a + W(\Delta(\rho))$. Recall that the polytope $\Delta_k$ is defined by:
$$\Delta_k = \frac{1}{k} \conv\{ v(\sigma / \tau^k) \mid \sigma \in E^k \setminus \{0\} \}.$$
Let $U_k$ denote the toric open subset in
Theorem \ref{th-main2} corresponding to the rational polytope $\Delta_k$. From Theorem 
\ref{th-Milena} we then conclude that a ball of capacity $\rho$ can be symplectically embedded into $U_k$. This implies that the Gromov width of $X$ is at least $R$. }
\end{proof}


\begin{Rem} \label{rem-Seshadri-constant}
{In algebraic geometry, given a projective variety $X$ and a line bundle $L$ one defines the notion of (very general) Seshadri constant $\epsilon(X, L)$ of $(X, L)$ 
(see \cite[Chapter 5]{Lazarsfeld-book}). 
It is a measure of 
local positivity of the line bundle $L$. In \cite{McDuff} it is proved that the Gromov width of $(X, L)$ is bounded below by the Seshadri constant $\epsilon(X, L)$. 
In \cite{Ito}, using different methods, Ito proves a bound on the Seshadri constant that is very close to that of Corollary \ref{cor-GW-NO-body}.} 
\end{Rem}

Recall that the degree $d$ of an ample line bundle $L$, i.e. the self-intersection of the divisor class of $L$,
is equal to $n!$ times the symplectic volume of $(X, \omega)$.
\begin{Cor} \label{cor-symplectic-packing}
Let us assume that $L$ is very ample. Then given any $\epsilon > 0$, the symplectic manifold $X$ has a symplectic ball packing by $d$ balls of capacity 
$1-\epsilon$ where $d$ is the degree of the line bundle $L$. In other words, $X$ has a full symplectic packing by $d$ equal balls.
\end{Cor}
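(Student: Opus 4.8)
The plan is to realize the Newton--Okounkov body of $(X,L)$ for a well-chosen valuation as a single simplex of volume $d/n!$, cut it into $d$ lattice simplices of size $1$, transport this decomposition to a toric open subset of $X$ via Theorem~\ref{th-main2}, and then pack one ball of capacity $1-\epsilon$ into (a slight shrinking of) each piece using the toric ball-embedding results.

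First I would choose the valuation $v$ of Section~\ref{sec-simplex-NO-body}, associated to sections $\sigma_1,\dots,\sigma_n\in E$ of $L$ in general position. By Theorem~\ref{th-NO-body-simplex}(1) the Newton--Okounkov body is then $\Delta=\conv\{0,e_1,\dots,e_{n-1},de_n\}$, whose Euclidean volume is $d/n!$. The elementary combinatorial point I would check is that $\Delta$ has a unimodular triangulation into exactly $d$ lattice simplices: for $j=0,1,\dots,d-1$ set $P_j=\conv\{e_1,\dots,e_{n-1},\,je_n,\,(j+1)e_n\}$. Each $P_j$ is the image of the closed standard simplex $\overline{\Delta(1)}$ (in the notation of Definition~\ref{def-size-simplex}) under an element of $\textup{GL}(n,\z)$ followed by a translation, so each $P_j$ has size $1$; the $P_j$ have pairwise disjoint interiors, and $\bigcup_{j=0}^{d-1}P_j=\Delta$.

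Next, fix $\epsilon>0$ and shrink the picture slightly: let $h$ be the homothety $x\mapsto(1-\epsilon/3)(x-b)+b$, where $b$ is the barycenter of $\Delta$. Then $h(\Delta)$ is a compact subset of the interior of $\Delta$, the $d$ simplices $h(P_0),\dots,h(P_{d-1})$ still have pairwise disjoint interiors, and each $h(P_j)$ has size $1-\epsilon/3$. Exactly as in the proof of Corollary~\ref{cor-GW-NO-body} (using $\Delta=\overline{\bigcup_{k>0}\Delta_k}$ and that the $\Delta_k$ exhaust the interior of $\Delta$), I would choose $k>0$ with $h(\Delta)$ contained in the interior of the rational polytope $\Delta_k$. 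Applying Theorem~\ref{th-main2} with this $k$ gives an open subset $U\subset X$ with $(U,\omega)$ symplectomorphic to $(\c^*)^n$ equipped with the rational toric K\"ahler form $(1/k)\omega_{\A_k,c_k}$; this is a proper toric symplectic manifold whose moment image is the interior of $\Delta_k$, and that interior contains the $d$ open simplices $\textup{int}(h(P_j))$ of size $1-\epsilon/3$ with pairwise disjoint closures-of-interiors.

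Finally I would run the toric ball-packing construction. Theorem~\ref{th-Milena} is stated for one ball, but the explicit embeddings of \cite{Traynor,Pabiniak} are local in the moment polytope: for a size-$R$ simplex $S$ inside the moment image and any $\rho<R$, a ball of capacity $\rho$ can be embedded so that its moment image has compact closure inside the open simplex $S$ (translate the size-$\rho$ model simplex into the interior of the size-$R$ one before applying the standard embedding). Since $1-\epsilon<1-\epsilon/3$ and the $h(P_j)$ have pairwise disjoint interiors, this produces $d$ pairwise disjoint symplectically embedded balls of capacity $1-\epsilon$ inside $U\subset X$. A ball of capacity $1-\epsilon$ has volume $(1-\epsilon)^n/n!$ and the symplectic volume of $(X,\omega)$ is $d/n!$, so $v_d(X,\omega)\ge(1-\epsilon)^n$; letting $\epsilon\to0$ and using $v_d(X,\omega)\le1$ gives $v_d(X,\omega)=1$, i.e. a full symplectic packing of $X$ by $d$ equal balls. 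I expect the main obstacle to be this last step: carefully establishing the ``several disjoint balls'' version of Theorem~\ref{th-Milena} with moment images confined to the prescribed simplices, so that disjointness of the simplices really forces disjointness of the balls; the triangulation claim and the topological fact that a compact subset of the interior of $\Delta$ sits inside some $\Delta_k$ are routine.
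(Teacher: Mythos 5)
Your proposal is correct and follows essentially the same route as the paper: the same valuation from Section \ref{sec-simplex-NO-body} giving the simplicial Newton--Okounkov body $\conv\{0,e_1,\ldots,e_{n-1},de_n\}$, the same decomposition into $d$ unimodular (size-one) subsimplices, and the same reliance on the multi-ball refinement implicit in the proof of Theorem \ref{th-Milena} applied to the toric open subset produced by Theorem \ref{th-main2}. The only, harmless, deviation is that you fit the shrunk simplices into some $\Delta_k$ via a barycentric homothety together with the exhaustion $\Delta=\overline{\bigcup_{k>0}\Delta_k}$ (which works since the $\Delta_k$ form a directed family), whereas the paper instead invokes Theorem \ref{th-NO-body-simplex}(2) and only truncates the last vertex $de_n$ to $d''e_n$.
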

\begin{proof}
Let $v$ be the initial term valuation described in Section \ref{sec-simplex-NO-body} which gives rise to the simplicial Newton-Okounkov body:
$$\Delta = \conv\{0, e_1, \ldots, e_{n-1}, de_n \}.$$
The simplex $\Delta$ can be written as the union of $d$ simplexes $\Delta_1, \ldots, \Delta_d$ where $$\Delta_i = \conv\{e_1, \ldots, e_{n-1}, (i-1)e_n, ie_n \}.$$ 
For each $i$ the simplex $\Delta_i$ is a lattice simplex and it is not difficult to see that $\Delta_i$ contains no lattice point in its interior. It follows that 
$\Delta_i$ can be transformed into the standard unit simplex $\{ (y_1, \ldots, y_n) \mid y_1 + \cdots + y_n < 1\}$ by a translation and a change of coordinates in 
$\textup{GL}(n, \z)$. Thus the $\Delta_i$ have size $1$ in the sense of Definition \ref{def-size-simplex}.
Take any $d''$ with $d-1 < d'' < d$. By Theorem \ref{th-NO-body-simplex}(2) we can find large enough integer $k>0$ 
such that the simplex: 
$$\Delta'' = \conv\{0, e_1, \ldots, e_{n-1}, d''e_n \},$$
is contained in $\Delta_k = (1/k) \conv(\A_k)$. Let $\Delta_{d''}$ denote the simplex:
$$\Delta_{d''} = \conv\{e_1, \ldots, e_{n-1}, (d-1)e_n, d''e_n \}.$$
Let $\epsilon > 0$ be given. By the proof of Theorem \ref{th-Milena} since the interiors of the simplices $\Delta_1, \ldots, \Delta_{n-1}$ and $\Delta_{d''}$ are disjoint, 
we can find symplectic embedding of a disjoint unions of $d$ balls of capacity $1-\epsilon$ into $X$ provided that $d''$ is sufficiently close to $d$.
This finishes the proof of the corollary.
\end{proof}


\end{document}